\newtheorem{theorem}{Theorem}[section]
\newtheorem{lemma}{Lemma}[section]
\newtheorem{corollary}{Corollary}[section]
\newtheorem{definition}{Definition}[section]
\numberwithin{equation}{section}
\begin{document}

\title{\bf Some new results on generalized diagonally dominant matrices and matrix eigenvalue inclusion regions}

\author{Yongzhong Song\\\\
\small{\it Jiangsu Key Laboratory for NSLSCS,
School of Mathematical Sciences,} \\
\small{\it  Nanjing Normal University,
Nanjing 210023, People's Republic of China}\\
\small{\it Email:yzsong@njnu.edu.cn}
}
\date{}
\maketitle

\begin{abstract}
\vskip.5cm

In matrix theory and numerical analysis there are two very famous and important results.
One is Ger\v{s}gorin circle theorem, the other is strictly diagonally dominant theorem.
They have important application and research value, and have been widely used and studied.
In this paper, we investigate generalized diagonally dominant matrices and matrix eigenvalue
inclusion regions. A class of $G$-function pairs is proposed, which extends the concept of
$G$-functions. Thirteen kind of $G$-function pairs are established. Their properties and
characteristics are studied. By using these special $G$-function pairs, we construct a large
number of sufficient and necessary conditions for strictly diagonally dominant matrices and
matrix eigenvalue inclusion regions. These conditions and regions are composed of different
combinations of $G$-function pairs, deleted absolute row sums and column sums of matrices.
The results extend, include and are better than some classical results.

\vskip.5cm
{\it Keywords:}
Matrix, $G$-function pairs, Generalized diagonally dominant matrix,  $H$-matrix,
Eigenvalue inclusion regions

\vskip.5cm
{\it AMS classification (2010):}
15A18,  15A42,  65F15,  26D07

\end{abstract}

\newpage
\pagestyle{headings}
\markboth{}{}
\hskip.1cm
 \vskip .5cm\tableofcontents

 \vskip .5cm\noindent
 {\bf References  \hfill 42}

\newpage\markboth{}{}
\section{Introduction}

In matrix theory and numerical analysis there are two very famous and important results.
One is Ger\v{s}gorin circle theorem, the other is strictly diagonally dominant theorem.
They have important application and research value, and have been widely used and studied.

Let
\begin{eqnarray*}
A = \left[\begin{array}{ccccc}
  a_{1,1} & a_{1,2} & \cdots & a_{1,n}\\
  a_{2,1} & a_{2,2} & \cdots & a_{2,n}\\
  \vdots & \vdots &   &  \vdots \\
  a_{n,1}& a_{n,2} &  \cdots & a_{n,n}
\end{array} \right] \in \mathbb{C}^{n\times n}.
 \end{eqnarray*}

Ger\v{s}gorin [20] proposed the following famous eigenvalue
inclusion theorem.

\medskip

{\bf Ger\v{s}gorin Circle Theorem:}\medskip

{\it For any eigenvalue $\lambda\in\sigma(A)$, there exists $i\in\{1,\cdots,n\}$
such that
\begin{eqnarray*}
|\lambda - a_{i,i}| \le \sum_{1\le j\le n, j\not= i}|a_{i,j}|.
 \end{eqnarray*}
Consequently,
\begin{eqnarray*}
\sigma(A) \subseteq \bigcup_{i=1}^n\left\{z: |z - a_{i,i}| \le
\sum_{1\le j\le n, j\not= i}|a_{i,j}|\right\}.
 \end{eqnarray*} }

This theorem is simple in form and easy to determine numerically,
so it is widely used.

Since the Ger\v{s}gorin circle theorem was published,
a lot of researches and extensions have been made by many
mathematicians. People use different combinations (sum, product, convex combination, etc.)
of absolute row/column sums (or their parts) to obtain new eigenvalue inclusion regions.
Some early results are collected [13].
A large number of relevant results are reviewed by Varga in the comprehensive monograph [52].

In order to generalize the Ger\v{s}gorin circle theorem, $G$-functions are defined
in two different ways [7, 25],
where $G$ comes from the first letter of Ger\v{s}gorin [24].
This concept is first proposed by Nowosad (1965) and Hoffman (1969).
The original name is $G$-generating family.
After that, it is studied by many mathematicians [7, 8, 21, 22, 24, 25, 32, 33].

The Ger\v{s}gorin circle theorem can be proved in different ways, and one of the simple
methods is based on the nonsingularity of matrices [31, III-2.2.1].
One of the two definitions of $G$-functions is also expressed by the nonsingularity of matrices [7, 25].
In mathematics, nonsingularity is a very important property of matrices.
The following theorem provides a sufficient condition for a matrix to be nonsingular.

 \bigskip
{\bf Strictly Diagonally Dominant Theorem:}\medskip

{\it If $A$ is strictly diagonally dominant, i.e.,
\begin{eqnarray*}
|a_{i,i}| > \sum_{1\le j\le n, j\not= i}|a_{i,j}|, \quad i =1,
\cdots, n,
 \end{eqnarray*}
then $A$ is nonsingular.}

\bigskip
Strictly diagonally dominant is initially called strongly diagonally dominant [19, 30].

The strictly diagonally dominant theorem is an old and recurring result in matrix theory.
It can be traced back to at least L\'{e}vy (1881), Desplanques (1887), Minkowski (1900) and Hadamard (1903).
So it is also called as the Hadamard's Theorem [35, 38] and
the L\'{e}vy-Desplanques Theorem [6, 27, 31].
In [23], it is named as Desplanques-L\'{e}vy-Hadamard-Ger\v{s}gorin sufficient condition.
In [20, Satz I], the strictly diagonally dominant theorem is stated and proved.
However, there are some mistakes in content and proof. The corrected result and
a very simple proof is given in [46], where it is also extended to the irreducibly diagonally dominant matrices.

We notice an interesting phenomenon that both the Ger\v{s}gorin circle theorem
and the strictly diagonally dominant theorem are closely related to two
groups of numbers $\{a_{i,i}: i = 1,\cdots,n\}$ and
$\{\sum_{1\le j\le n, j\not= i}|a_{i,j}|: i = 1,\cdots,n\}$.
Therefore, the equivalence between the Ger\v{s}gorin circle
theorem and the strictly diagonally dominant theorem is shown
[6, 50, 52]. In other words, the Ger\v{s}gorin circle
theorem can be derived from the strictly diagonally dominant theorem and vice versa.
This provides a very meaningful way, that is, through the study of strictly diagonally
dominant matrix to derive matrix eigenvalue inclusion regions.

The strictly diagonally dominant theorem can be extended to generalized diagonally dominant matrix.
The name of generalized diagonally dominant matrix comes from [28, Definition 2].
While, due to [34], the generalized diagonally dominant matrices and $H$-matrices are the same
(cf. also [19, (1.2)], [30, Lemma 1.4].

$H$-matrix was first introduced and studied by Ostrowski [34, 35, 36, 37],
under the names of ``$H$-Determinante" and ``$H$-Matrix", respectively.
$H$-matrices play a very important role in numerical analysis,
optimization theory and other applied sciences.
It has not only important theoretical research value, but also extensive application value.
For classical iterative methods of linear systems,
such as Jacobi, Gauss-Seidel, SOR and AOR methods, etc.,
$H$-matrices are widely used to construct the sufficient conditions for convergence
[3, 28,30, 37, 42, 44, 45, 49].
For a linear complementarity problem (LCP), when
the coefficient matrix is a real $H$-matrix with positive diagonal elements,
then it has a unique solution and we can construct a class of convergent modified AOR methods
[54].
Furthermore, $H$-matrices are closely related to $M$-matrices.
$M$-matrix was first introduced and studied by Ostrowski [34, 35, 36, 37],
under the names of ``$M$-Determinante" and ``$M$-Matrix".
The properties of $M$-matrices, $H$-matrices and related materials are summarized
[3, 17, 26, 39, 40, 41, 49].

In this paper, we investigate generalized diagonally dominant matrices and matrix eigenvalue inclusion regions.
A class of $G$-function pairs is proposed, which extends the concept of $G$-functions.
For general $G$-function pairs, we prove their relations with strictly
diagonally dominant matrices and the matrix eigenvalue inclusion regions, respectively.
Thirteen kind of $G$-function pairs are established.
Their properties and characteristics are studied, and their relations with
$G$-functions are discussed. By using
these special $G$-function pairs, we construct a large number of sufficient and necessary
conditions for strictly diagonally dominant matrices and matrix eigenvalue inclusion regions.
These conditions and regions are composed of different combinations of $G$-function pairs,
deleted absolute row sums and column sums of matrices. Our results extend, include and are
better than some classical results.

This paper is organized as follows. In Section 2 we give
some concepts and lemmas to be used in the following.
In Section 3, we propose the definition of $G$-function pairs and prove
the relations with strictly diagonally dominant matrices.
We establish thirteen kind of $G$-function pairs and discuss their properties, characteristics and relations with
$G$-functions. In Section 4, a large of necessary and
sufficient conditions for strictly diagonally dominant matrices are constructed.
In Section 5, we prove the relations between the general $G$-function pairs and the matrix eigenvalue
inclusion regions. Many inclusion regions are established.
In Section 6, some reviews and prospects are given.

\newpage\markboth{}{}
\section{Some concepts and lemmas}\label{se2}

For any positive integer $n$, denote ${\cal N}=\{1, 2, \cdots, n\}$.
Without loss of generality, we assume that $n \ge 2$.

Let $\mathbb{C}^{n\times n}$ and $\mathbb{R}^{n\times n}$ denote the collection of all $n \times n$ matrices
with complex and real entries, respectively.
Let $\mathbb{C}^{n}$ and $\mathbb{R}^{n}$ denote the collection of all column vectors
with complex and real entries, respectively.
For $A=[a_{i,j}]\in \mathbb{C}^{n\times n}$,
$A^T$ is the transposition of $A$, $|A|:=[|a_{i,j}|]$,
$D(A)=diag[a_{1,1},a_{2,2},\cdots,a_{n,n}]$.
Let $D^P_n$ be the set of all diagonal matrices of order $n$ with
positive diagonal entries. Denote $\mathbb{R}_+^n$ as the set of column
vectors with nonnegative entries and $e=[1,1,\ldots,1]^T\in \mathbb{R}_+^n$.
The spectrum of $A$, denoted by $\sigma(A)$, is the collection of
eigenvalues of $A$. Spectral radius of $A$ is denoted by $\rho(A)$.

For $A=[a_{i,j}], B=[b_{i,j}]\in \mathbb{R}^{n\times
n}$, notation $A \ge (>) B$ means $a_{i,j} \ge (>) b_{i,j}$ for all
$i,j\in {\cal N}$. We call $A$ nonnegative if $A \ge 0$.
Similarly, for $x=[x_1,x_2,\cdots,x_n]^T, y=[y_1,y_2,\cdots,y_n]^T\in \mathbb{R}^n$, notation $x \ge (>)y$ means
$x_i \ge (>)y_i$ for all $i\in {\cal N}$.

We call
\begin{eqnarray*}
r_i(A):=\sum_{j\in {\cal N}\setminus \{i\}}|a_{i,j}|,\quad
c_i(A):=\sum_{j\in {\cal N}\setminus \{i\}}|a_{j,i}|,
\end{eqnarray*}
the $i$-th deleted absolute row sum and column sum of $A$,
respectively. The ``row sum'' function $r$ and ``column sum'' function $c$
are defined as $r(A)=[r_1(A),r_2(A),\cdots,r_n(A)]^T$
and $c(A)=[c_1(A),c_2(A),\cdots,c_n(A)]^T$, respectively.
Clearly, $r(A),c(A) \in \mathbb{R}_+^n$ and $c(A)=r(A^T)$.

\begin{definition}
A matrix $A=[a_{i,j}]\in \mathbb{C}^{n\times n}$ is strictly
diagonally dominant (by rows), denoted by $A\in SDD$, if
\begin{eqnarray*}
|a_{i,i}|> r_i(A), \; \hbox{for all} \; i\in {\cal N},
\end{eqnarray*}
i.e.,
\begin{eqnarray*}
|D(A)|e > r(A).
\end{eqnarray*}
\end{definition}

The definition of generalized diagonally dominant matrix is proposed [28, Definition 2].
We give an equivalent definition as follows.

\begin{definition}
A matrix $A=[a_{i,j}]\in \mathbb{C}^{n\times n}$ is generalized
diagonally dominant (by rows), denoted by $A\in GDD$, if there
exists a diagonal matrix $X=diag[x_1,x_2,\cdots,x_n]\in D^P_n$ such that
$X^{-1}AX\in SDD$, i.e.,
\begin{eqnarray*}
|a_{i,i}|> r_i^X(A):=r_i(X^{-1}AX)=\frac{1}{x_i}\sum_{j\in
{\cal N}\setminus \{i\}}|a_{i,j}|x_j, \; \hbox{for all} \; i\in {\cal N}.
\end{eqnarray*}
\end{definition}

Here $r_i^X(A)$ is called the $i$-th weighted deleted absolute row sum of $A$ and
$r^X = (r_1^X, \cdots, r_n^X)^T$ is called the weighted row sum function.
Similarly, we define the $i$-th weighted deleted absolute column sum $c_i^X(A)$ of $A$ as
$c_i(X^{-1}AX)$ and the weighted column sum function as $c^X = (c_1^X, \cdots, c_n^X)^T$.

By strictly diagonally dominant theorem, it is clearly that if $A\in GDD$,
then it is nonsingular.

\begin{definition}
A matrix $A=[a_{i,j}]\in \mathbb{R}^{n\times n}$ is called a $Z$-matrix, denoted
by $A\in {\cal Z}$, if $a_{i,j} \le 0$  for all $i,j \in {\cal N}$, $i \not= j$.
\end{definition}

Obviously, for any $A\in {\cal Z}$, there exist a nonnegative matrix $B$
and a nonnegative number $s$ such that $A=sI-B$.

\begin{definition}
Given $A=[a_{i,j}]\in {\cal Z}$, express $A$ as $A=sI-B$, where $s \ge 0$
and $B \ge 0$. Then $A$ is an $M$-matrix, if
$\rho(B)<s$.
\end{definition}

We call $\mathcal{M}(A) \in \mathbb{R}^{n\times n}$ defined by
\begin{eqnarray*}
\mathcal{M}(A):=\left[\begin{array}{cccc}
 |a_{1,1}|&-|a_{1,2}|& \cdots &-|a_{1,n}|\\
-|a_{2,1}|&|a_{2,2}|& \cdots &-|a_{2,n}|\\
\vdots&\vdots& \ddots &\vdots\\
-|a_{n,1}|&-|a_{n,2}|& \cdots &|a_{n,n}|
\end{array}\right]
\end{eqnarray*}
as the comparison matrix of $A$.

\begin{definition}[[50, Definition 3.26]]
A matrix $A\in \mathbb{C}^{n\times n}$ is an $H$-matrix if $\mathcal{M}(A)$ is an $M$-matrix.
\end{definition}

It is well-known that $A$ is an $H$-matrix if and only if $A\in GDD$ [19, 30, 34]. Hence, an $H$-matrix
is nonsingular. Furthermore, a definition of $H$-matrix is given by the generalized diagonally dominant matrix
[1, Definition 1.1]. In particular, we have
 \begin{eqnarray*}
A \in GDD \;\; \hbox{iff} \;\; A^T \in GDD.
\end{eqnarray*}

\begin{definition}
A matrix $A=[a_{i,j}]\in \mathbb{C}^{n\times n}$ is reducible if
there exist a permutation matrix $P$ and a positive integer $\kappa$
with $1 \le \kappa < n$ such that
\begin{eqnarray*}
P^TAP=\left[\begin{array}{cc}
 A_{1,1}& A_{1,2}\\
 0 & A_{2,2}
 \end{array}\right],
\end{eqnarray*}
where $A_{1,1}\in \mathbb{C}^{\kappa\times \kappa}$, $A_{2,2}\in
\mathbb{C}^{(n-\kappa)\times (n-\kappa)}$. Otherwise,
$A$ is said to be irreducible.
\end{definition}

Clearly, if $A$ is reducible, then there exists a permutation matrix
$P$ such that
\begin{eqnarray}\label{eqn2-1}
P^TAP=\left[\begin{array}{cccc}
 A_{1,1}& A_{1,2} & \cdots&A_{1,l}\\
 0& A_{2,2}&\cdots & A_{2,l}\\
\vdots&\vdots& &\vdots\\
0&0&\cdots& A_{l,l}
\end{array}\right],
\end{eqnarray}
where $A_{k,k} \in \mathbb{C}^{(n_k-n_{k-1})\times (n_k-n_{k-1})}$
with $k\in \{1,\cdots,l\}$, $n_0=0$ and $n_l=n$ is an irreducible matrix
or zero matrix of order one. The form (2.1) is called
the Frobenius normal form of $A$.

We denote
\begin{eqnarray*}
&& \tilde{r}_{n_{k-1}+\eta}(A):=r_\eta(A_{k,k}), \; \tilde{c}_{n_{k-1}+\eta}(A):=c_\eta(A_{k,k}),\\
&& k =1,\cdots, l, \; \eta = 1, \cdots, n_k-n_{k-1}
\end{eqnarray*}
and
\begin{eqnarray*}
\tilde{r} :=[\tilde{r}_1,\cdots,\tilde{r}_n]^T, \;\;
\tilde{c}:=[\tilde{c}_1,\cdots,\tilde{c}_n]^T.
\end{eqnarray*}

We set $\tilde{r}^X(A):=\tilde{r}(X^{-1}AX)$ and
$\tilde{c}^X(A):=\tilde{c}(X^{-1}AX)$ for $X \in D^P_n$.
Obviously, $r(A) \ge \tilde{r}(A)$, $c(A) \ge \tilde{c}(A)$, $r^X(A) \ge \tilde{r}^X(A)$,
$c^X(A) \ge \tilde{c}^X(A)$.
Furthermore, $\tilde{r}(A)=r(A)$, $\tilde{c}(A)=c(A)$, $\tilde{r}^X(A)=r^X(A)$,
$\tilde{c}^X(A)=c^X(A)$, whenever $A$ is irreducible.

The following lemma is easy to prove.

\begin{lemma} \label{lem2-1}
Let $A\in \mathbb{C}^{n\times n}$ have the Frobenius normal form given by (2.1).
Then

\begin{itemize}
\item[\rm(i)]
$A$ is nonsingular if and only if $A_{k,k}$ is nonsingular for $k=1,\cdots,l$;

\item[\rm(ii)]
$A\in GDD$ iff $A_{k,k}\in GDD$
or $A_{k,k}$ is a non-zero constant for $k=1,\cdots,l$.
\end{itemize}
\end{lemma}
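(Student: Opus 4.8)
The statement concerns a matrix $A$ in Frobenius normal form (2.1), which is block upper triangular with diagonal blocks $A_{1,1}, \dots, A_{l,l}$. Part (i) is the standard fact that the determinant of a block triangular matrix is the product of the determinants of its diagonal blocks; I would simply invoke $\det A = \prod_{k=1}^{l} \det A_{k,k}$, so $\det A \neq 0$ if and only if $\det A_{k,k} \neq 0$ for every $k$. One subtlety: some of the blocks $A_{k,k}$ may be $1\times 1$, in which case "nonsingular" just means the single entry is nonzero; this is consistent with the determinant formula, so no special handling is needed beyond noting it.

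For part (ii), the plan is to use the characterization $A \in GDD$ iff $A$ is an $H$-matrix iff $\mathcal{M}(A)$ is an $M$-matrix, together with Definition 2.5. First I would observe that since $P^T A P$ and $A$ are permutation-similar, $A \in GDD$ iff $P^T A P \in GDD$ (an explicit diagonal scaling conjugates over), so we may assume $A$ is already in the form (2.1). Next, the comparison matrix $\mathcal{M}(A)$ inherits the block upper triangular structure of $A$: its diagonal blocks are exactly $\mathcal{M}(A_{k,k})$, and its off-diagonal blocks are $\le 0$. Writing $\mathcal{M}(A) = sI - B$ with $B \ge 0$ and $s \ge 0$, the block triangularity means $B$ is block upper triangular with diagonal blocks $B_{k,k}$ where $\mathcal{M}(A_{k,k}) = sI - B_{k,k}$; hence $\rho(B) = \max_k \rho(B_{k,k})$ by the spectrum of a block triangular matrix. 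Therefore $\mathcal{M}(A)$ is an $M$-matrix ($\rho(B) < s$) iff $\rho(B_{k,k}) < s$ for all $k$, i.e. iff each $\mathcal{M}(A_{k,k})$ is an $M$-matrix, i.e. iff each $A_{k,k}$ is an $H$-matrix, iff each $A_{k,k} \in GDD$. The "non-zero constant" caveat handles the $1 \times 1$ blocks: a $1\times1$ matrix $[a]$ is vacuously in $SDD$ (and hence $GDD$) precisely when $a \neq 0$, but under our Definition 2.2 the empty row sum condition $|a| > 0$ is exactly "$a$ is a non-zero constant," which is why it is stated separately rather than folded into "$A_{k,k} \in GDD$."

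The main obstacle, such as it is, is bookkeeping rather than depth: one must be careful that the diagonal scaling witnessing $X^{-1}AX \in SDD$ can be assembled block-by-block and, conversely, restricted to each block. Concretely, for the forward direction of (ii) one shows that if $X = \mathrm{diag}(X_1, \dots, X_l)$ (partitioned conformally) witnesses $A \in GDD$, then $X_k$ witnesses $A_{k,k} \in GDD$, since the weighted deleted row sums of $A_{k,k}$ are dominated by those of $A$ (using $\tilde r^X(A) \le r^X(A)$ and the inequality $|a_{i,i}| > r_i^X(A)$); for the converse one takes block witnesses $X_k$ and scales them by suitably small positive constants $\varepsilon_k$ to kill the contribution of the off-diagonal blocks $A_{k,m}$ with $m > k$, a standard "triangular" argument. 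I would present whichever of the two routes—via $M$-matrices or via the scaling construction—is shorter; the $M$-matrix route is cleaner because it offloads everything onto the already-cited equivalence $GDD = H\text{-matrix}$ and the elementary spectral fact about block triangular nonnegative matrices.
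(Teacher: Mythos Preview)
Your proposal is correct. The paper does not actually supply a proof of this lemma---it says only that the result ``is easy to prove''---so there is no approach to compare against; both of your routes for (ii), via the $M$-matrix characterization together with $\rho(B)=\max_k\rho(B_{k,k})$ for block-triangular nonnegative $B$, or via the explicit $\varepsilon_k$-scaling of block witnesses, are sound and standard, and (i) follows immediately from $\det(P^TAP)=\prod_{k=1}^{l}\det A_{k,k}$ as you indicate.
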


\begin{definition}[[52, Definition 5.1]]
$\mathcal{F}_n$ is defined as the collection of all functions $f=[f_1,
f_2,\cdots,f_n]^T$ such that

\begin{itemize}
\item[\rm(i)] $f:\mathbb{C}^{n\times n} \rightarrow \mathbb{R}^n_+$, i.e., for any $A\in \mathbb{C}^{n\times n}$,
$0 \le f_k(A)<+\infty$, $k\in {\cal N}$;

\item[\rm(ii)] for each $k\in {\cal N}$, $f_k(A)$ depends only on the moduli of the
off-diagonal entries of $A=[a_{i,j}]\in \mathbb{C}^{n\times n}$.
\end{itemize}
\end{definition}

\begin{definition}[[52, Definition 5.2]] \label{den2-8}
Let $f=[f_1,f_2,\cdots,f_n]^T\in\mathcal{F}_n$. Then $f$ is called a
$G$-function if for any $A=[a_{i,j}]\in \mathbb{C}^{n\times n}$ the
relation
\begin{eqnarray*}
|D(A)|e > f(A)
\end{eqnarray*}
implies that $A$ is nonsingular.
The set of all $G$-functions in $\mathcal{F}_n$
is denoted by $\mathcal{G}_n$.
\end{definition}

The following results are shown [7].

\begin{lemma}  \label{lem2-2}
Supposed that $r$, $c$, $\tilde{r}$ and $\tilde{c}$ are defined above. Then

\begin{itemize}
\item[\rm(i)] $r, c, \tilde{r}, \tilde{c} \in \mathcal{G}_n$;

\item[\rm(ii)] $r^X, c^X, \tilde{r}^X, \tilde{c}^X \in \mathcal{G}_n$ for any $X\in D^P_n$.
\end{itemize}
\end{lemma}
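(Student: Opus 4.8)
The plan is to reduce all eight assertions to the Strictly Diagonally Dominant Theorem, using only that similarity by a positive diagonal matrix and transposition preserve nonsingularity while acting transparently on diagonals and on (weighted, deleted, reduced) row/column sums. First, I would dispose of membership in $\mathcal{F}_n$: each component of $r,c,\tilde r,\tilde c$ and of $r^X,c^X,\tilde r^X,\tilde c^X$ is a finite nonnegative real number, and $r_i(A)$, $c_i(A)$, $r_i^X(A)=\tfrac{1}{x_i}\sum_{j\neq i}|a_{i,j}|x_j$, $c_i^X(A)$ are (weighted) sums of moduli of off-diagonal entries of $A$ with the $x_j$ fixed, hence depend only on those moduli; for $\tilde r,\tilde c$ (and their weighted versions) one notes in addition that the block partition in the Frobenius normal form \eqref{eqn2-1} is determined by the zero/nonzero pattern of the off-diagonal entries of $A$, which is itself determined by their moduli. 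Thus all eight functions lie in $\mathcal{F}_n$, and only the nonsingularity ($G$-function) property remains.

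The base case is $r$: if $|D(A)|e>r(A)$ then $|a_{i,i}|>r_i(A)$ for all $i\in{\cal N}$, i.e. $A\in SDD$, so $A$ is nonsingular by the Strictly Diagonally Dominant Theorem; hence $r\in\mathcal{G}_n$. To get the remaining row/column cases I would record two elementary transfer facts. (a) For $X=\mathrm{diag}[x_1,\dots,x_n]\in D^P_n$ one has $D(X^{-1}AX)=D(A)$, $r(X^{-1}AX)=r^X(A)$, $c(X^{-1}AX)=c^X(A)$, $\tilde r(X^{-1}AX)=\tilde r^X(A)$, $\tilde c(X^{-1}AX)=\tilde c^X(A)$, and $A$ is nonsingular iff $X^{-1}AX$ is. (b) $D(A^T)=D(A)$ as a diagonal matrix, $r(A^T)=c(A)$, $\tilde r(A^T)=\tilde c(A)$ (after reversing the block order of the Frobenius normal form of $A^T$), and $A$ is nonsingular iff $A^T$ is. Granting the base case and (a): $|D(A)|e>r^X(A)$ gives $|D(X^{-1}AX)|e>r(X^{-1}AX)$, so $X^{-1}AX\in SDD$ is nonsingular, hence so is $A$; this yields $r^X\in\mathcal{G}_n$ for every $X\in D^P_n$. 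Combining the base case with (b) yields $c\in\mathcal{G}_n$, and combining with both (a) and (b) (applied with $Y=X^{-1}\in D^P_n$, since $c^X(A)=c(X^{-1}AX)=r\bigl((X^{-1}AX)^T\bigr)=r\bigl(Y^{-1}A^TY\bigr)=r^Y(A^T)$) yields $c^X\in\mathcal{G}_n$.

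For the reduced functions I would let $P$ realize the Frobenius normal form \eqref{eqn2-1} of $A$. Since $\det(P^TAP)=\pm\det A$ and $P^TAP$ has the same diagonal entries as $A$ up to the permutation induced by $P$, the hypothesis $|D(A)|e>\tilde r(A)$ is equivalent, blockwise, to $|D(A_{k,k})|e>r(A_{k,k})$ for $k=1,\dots,l$; that is, each $A_{k,k}$ is either in $SDD$ or a $1\times1$ nonzero block (for which $r(A_{k,k})=0$ and the inequality reads $|a|>0$), and in either case $A_{k,k}$ is nonsingular. By Lemma \ref{lem2-1}(i), $A$ is nonsingular, so $\tilde r\in\mathcal{G}_n$. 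Then $\tilde c\in\mathcal{G}_n$ follows by applying this to $A^T$ through fact (b), and $\tilde r^X,\tilde c^X\in\mathcal{G}_n$ follow by additionally conjugating with $X$ through fact (a), exactly as in the $r^X,c^X$ arguments above.

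I expect no genuine difficulty here — the statement is ``easy to prove'' — but the one place deserving care is the $\tilde r,\tilde c$ bookkeeping: checking that these are well-defined members of $\mathcal{F}_n$ (the block partition is a function of the off-diagonal moduli, once an ordering convention for the blocks is fixed) and that the single global inequality $|D(A)|e>\tilde r(A)$ decouples correctly into the blockwise $SDD$ conditions, including the degenerate $1\times1$ ``nonzero constant'' blocks, so that Lemma \ref{lem2-1}(i) applies cleanly.
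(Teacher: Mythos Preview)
Your argument is correct. The paper does not supply its own proof of this lemma: it simply states the result and attributes it to Carlson--Varga [7], so there is no in-paper proof to compare against. Your self-contained route --- establishing $r\in\mathcal{G}_n$ directly from the Strictly Diagonally Dominant Theorem, then transferring to $r^X$, $c$, $c^X$ via the two elementary invariances (diagonal conjugation and transposition preserve nonsingularity while transforming the relevant sums predictably), and finally handling $\tilde r,\tilde c,\tilde r^X,\tilde c^X$ by decoupling the global inequality into blockwise $SDD$ conditions on the Frobenius normal form and invoking Lemma~\ref{lem2-1}(i) --- is exactly the standard argument behind the cited result. Your attention to the $\mathcal{F}_n$ membership of $\tilde r,\tilde c$ (that the block partition depends only on the off-diagonal zero pattern, hence on the moduli) and to the $1\times 1$ degenerate blocks is appropriate and closes the only places where a casual write-up might leave a gap.
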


\begin{lemma}[[13] and [52, p.143]]  \label{lem2-3}
If $f = [f_1, f_2,\cdots, f_n]^T \in \mathcal{G}_n$,
then for any $A \in \mathbb{C}^{n\times n}$,
there exists $X\in D^P_n$ (depending on $A$) such that
\begin{eqnarray*}
f_k(A) \ge \tilde{r}_k^X(A), \; \mbox{for all} \; k\in {\cal N}.
\end{eqnarray*}
\end{lemma}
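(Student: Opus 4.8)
The plan is to reduce to the irreducible case and then invoke the characterization of $G$-functions via nonsingularity. First I would observe that it suffices to prove the claim when $A$ is irreducible: if $A$ has the Frobenius normal form (2.1), then $\tilde{r}_k^X(A)$ only depends on the diagonal block $A_{k,k}$ containing index $k$, so a block-diagonal choice of $X$ (built from the per-block scalings) gives the full statement, using that off-diagonal blocks of (2.1) are irrelevant to $\tilde r$. Since the hypothesis $f\in\mathcal G_n$ is about all matrices in $\mathbb C^{n\times n}$, it applies in particular to each irreducible $A_{k,k}$. So assume from now on $A$ is irreducible, whence $\tilde r^X(A)=r^X(A)$.

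Next, fix $A=[a_{i,j}]$ irreducible and set $R:=[r_1(A),\dots,r_n(A)]^T$. I want to find $X=\mathrm{diag}[x_1,\dots,x_n]\in D^P_n$ with $f_k(A)\ge r_k^X(A)=\frac{1}{x_k}\sum_{j\ne k}|a_{k,j}|x_j$ for every $k$; equivalently, with $B:=[b_{i,j}]$ the nonnegative matrix having $b_{k,j}=|a_{k,j}|$ for $j\ne k$ and $b_{k,k}=0$, I want $\mathrm{diag}[f_k(A)]\,x \ge X^{-1}BX e$ coordinatewise for some positive $x$, i.e. $f_k(A)x_k \ge \sum_{j\ne k}b_{k,j}x_j$. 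The key trick is to argue by contradiction using Definition \ref{den2-8}: suppose no such $X$ exists. Then consider the $Z$-matrix $C_t:=\mathrm{diag}[f_k(A)+t]\,I_{\text{off by }0} - B$ — more precisely $C$ with diagonal entries $f_k(A)$ and off-diagonal entries $-|a_{k,j}|$. If $C=\mathrm{diag}[f_k(A)]-B$ were an $M$-matrix (equivalently $\rho(D_f^{-1}B)<1$ when all $f_k(A)>0$, where $D_f=\mathrm{diag}[f_k(A)]$), then by Perron–Frobenius theory for the irreducible nonnegative matrix $D_f^{-1}B$ one gets a positive Perron-type vector realizing the desired inequality, contradiction. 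So $C$ fails to be an $M$-matrix. Now build a genuine matrix $\hat A\in\mathbb C^{n\times n}$ whose off-diagonal moduli equal those of $A$ (so $f_k(\hat A)=f_k(A)$ by property (ii) of $\mathcal F_n$) and whose diagonal entries $\hat a_{k,k}$ satisfy $|\hat a_{k,k}|=f_k(A)+\varepsilon$ for small $\varepsilon>0$; for instance take $\hat a_{k,k}=f_k(A)+\varepsilon$ and $\hat a_{k,j}$ a complex number of modulus $|a_{k,j}|$ chosen so that $\mathcal M(\hat A)=C+\varepsilon I$ and $\hat A$ itself is singular. The point is to exhibit a singular $\hat A$ with $|D(\hat A)|e > f(\hat A)$, contradicting $f\in\mathcal G_n$.

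I expect the main obstacle to be the last step: one cannot in general pick complex off-diagonal entries making $\hat A$ exactly singular just from knowing $\mathcal M(\hat A)$ is not an $M$-matrix. The cleaner route, which I would actually pursue, is to replace the ad hoc construction by the standard equivalence (available in the cited sources): $f\in\mathcal G_n$ forces, for each fixed pattern of off-diagonal moduli, that $\mathrm{diag}[f_k(A)]-B$ be a (possibly singular) $M$-matrix-like comparison matrix — concretely, $\mathrm{diag}[f_k(A)+t]-B$ is a nonsingular $M$-matrix for every $t>0$, because otherwise a suitable real matrix with that comparison matrix is singular while satisfying $|D|e>f$. Nonsingular $M$-matrices are exactly those $Z$-matrices admitting a positive diagonal scaling into $SDD$ (a monotonicity/Perron–Frobenius fact), so for each $t>0$ there is $X_t\in D^P_n$ with $(f_k(A)+t)x_k > \sum_{j\ne k}b_{k,j}x_j$; normalizing $X_t$ (say $\max_k x_k=1$) and passing to a limit $t\downarrow 0$ along a convergent subsequence yields a limiting $X\in D^P_n$ — here irreducibility of $B$ is what guarantees the limit vector stays strictly positive — with $f_k(A)x_k \ge \sum_{j\ne k}b_{k,j}x_j$, i.e. $f_k(A)\ge \tilde r_k^X(A)$. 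Combining with the block reduction from the first paragraph completes the proof. The delicate points to get right are the strict positivity of the limiting scaling (needs irreducibility, or a separate argument handling zero rows of $B$) and the case $f_k(A)=0$, where the inequality forces the $k$-th row of $B$ to vanish.
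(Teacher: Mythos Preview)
The paper does not supply its own proof of this lemma; it merely cites [13] and [52, p.~143]. Your overall strategy in the ``cleaner route''---show that $C_t:=\mathrm{diag}[f_k(A)+t]-B$ is a nonsingular $M$-matrix for every $t>0$ (via the observation that otherwise some $C_{t^*}$ with $t^*>0$ would be a singular matrix whose off-diagonal moduli agree with those of $A$ and whose diagonal strictly dominates $f$, contradicting $f\in\mathcal{G}_n$), extract a positive scaling $X_t$, normalize, and pass to a limit $t\downarrow 0$ using irreducibility to keep the limit strictly positive---is exactly the standard argument from those references, and for irreducible $A$ it is correct as you have it.

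The block reduction in your first paragraph, however, has a genuine gap. You write that the hypothesis $f\in\mathcal{G}_n$ ``applies in particular to each irreducible $A_{k,k}$'', but $A_{k,k}$ lives in $\mathbb{C}^{(n_k-n_{k-1})\times(n_k-n_{k-1})}$, not in $\mathbb{C}^{n\times n}$, so $f$ cannot be evaluated on it; moreover the numbers you actually need on the diagonal of the block-level comparison matrix are the values $f_k(A)$ of $f$ at the \emph{full} matrix $A$, which depend on all off-diagonal entries of $A$ (including those in the off-diagonal blocks $A_{i,j}$, $i<j$) and are not recoverable from $A_{k,k}$ alone. The fix is to reverse the order of your two steps: first run the $M$-matrix argument on the full $n\times n$ matrix $C_t$ (this uses only $f\in\mathcal{G}_n$ and does not require irreducibility); then use the standard fact that every principal submatrix of a nonsingular $M$-matrix is again a nonsingular $M$-matrix to conclude that the restriction of $C_t$ to the indices of each diagonal block is a nonsingular $M$-matrix; and only now carry out your normalization-and-limit argument on each such block separately, where the irreducibility of $A_{\ell,\ell}$ (hence of $B_\ell$) guarantees strict positivity of the limiting vector. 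Assembling the per-block scalings into a block-diagonal $X$ gives the conclusion. The $1\times 1$ blocks are trivial since $\tilde r_k^X(A)=0$ there.
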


Accordingly, with the same proof in [52, p.143], the following lemma can be proved.

\begin{lemma}  \label{lem2-4}
If $f = [f_1,f_2, \cdots, f_n]^T \in \mathcal{G}_n$,
then for any $A \in \mathbb{C}^{n\times n}$,
there exists $Y \in D^P_n$ (depending on $A$) such that
\begin{eqnarray*}
f_k(A) \ge \tilde{c}_k^Y(A), \; \mbox{for all} \; k\in {\cal N}.
\end{eqnarray*}
\end{lemma}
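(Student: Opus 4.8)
\medskip\noindent\textbf{Proof proposal.} The plan is to mirror the proof of Lemma~\ref{lem2-3} in [52, p.143], with columns in place of rows throughout; I sketch the mechanism and single out the step that carries the weight.

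First I would reduce to a single irreducible block. By the definition of $\tilde{c}$ via the Frobenius normal form (2.1), the vector $\tilde{c}^Y(A)$ sees only the deleted column sums of the Frobenius diagonal blocks $A_{1,1},\dots,A_{l,l}$ of $A$, and a positive diagonal $Y$ acts on each block independently; scalar blocks contribute deleted column sum $0\le f(A)$, so there is nothing to do there. Hence it suffices to show: for each irreducible $A_{k,k}$ of size $m\ge 2$ there is a positive diagonal matrix $Y_k$ with $c_\eta(Y_k^{-1}A_{k,k}Y_k)\le f_{\,n_{k-1}+\eta}(A)$ for $\eta=1,\dots,m$, the right-hand sides being fixed nonnegative numbers; assembling the $Y_k$ (and the identity on scalar blocks) through the permutation yields the required $Y\in D^P_n$. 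This is exactly the reduction used for rows in [52, p.143].

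So fix an irreducible block $A_{k,k}$ of size $m\ge 2$, and set $M:=|A_{k,k}|-D(|A_{k,k}|)$ (its off-diagonal modulus matrix), $N:=M^T$ (nonnegative and irreducible), and $D:=\mathrm{diag}\big(f_{\,n_{k-1}+1}(A),\dots,f_{\,n_k}(A)\big)$. Writing $z:=Y_k^{-1}e>0$, one computes $c_\eta(Y_k^{-1}A_{k,k}Y_k)=(Nz)_\eta/z_\eta$, so the desired inequalities are exactly $(D-N)z\ge 0$ with $z>0$. Since $m\ge 2$ and $A_{k,k}$ is irreducible, every row of $N$ is nonzero, and a short variant of the construction below shows every diagonal entry of $D$ is strictly positive (otherwise a singular matrix carrying these moduli could be kept strictly dominant, contradicting $f\in\mathcal{G}_n$); thus $D$ is invertible. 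It therefore suffices to prove $\rho(D^{-1}N)\le 1$: if $\rho(D^{-1}N)=1$, the Perron vector $z>0$ of $D^{-1}N$ satisfies $(D-N)z=0$; if $\rho(D^{-1}N)<1$, then $D-N$ is a nonsingular $M$-matrix and $z:=(D-N)^{-1}e>0$ gives $(D-N)z=e>0$. In either case put $Y_k:=\mathrm{diag}(z_1,\dots,z_m)^{-1}$.

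The crux, and the step I expect to be the main obstacle, is the bound $\rho(D^{-1}N)\le 1$, and it is here that the defining property of a $G$-function enters. Suppose $\rho:=\rho(D^{-1}N)>1$ and let $v>0$ be a corresponding Perron vector, $Nv=\rho Dv$. Let $C$ be the $m\times m$ matrix with diagonal $\rho\cdot\mathrm{diag}(D)$ and off-diagonal $(i,j)$-entry $-|(A_{k,k})_{i,j}|$; then $C^Tv=\rho Dv-Nv=0$, so $C$ is singular. Form the $n\times n$ matrix $B$ from $A$ by replacing its Frobenius $(k,k)$-block by $C$ and enlarging each diagonal entry outside that block past the corresponding component of $f(A)$; together with $|D(B)|e=\rho f(A)>f(A)$ inside the $(k,k)$-block (using $\rho>1$ and $D>0$), this gives $|D(B)|e>f(B)=f(A)$, where $f(B)=f(A)$ because $B$ and $A$ share all off-diagonal moduli (those of $C$ being the moduli of the $(k,k)$-block of $A$). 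But $B$ is block upper triangular with respect to $A$'s Frobenius partition, with $C$ (singular) as its $(k,k)$-block, so $\det B=0$, contradicting $f\in\mathcal{G}_n$. Hence $\rho(D^{-1}N)\le 1$, which finishes the block case and, with the reduction, the proof. (Alternatively one might try to deduce the lemma from Lemma~\ref{lem2-3} applied to $A^T$ and to the transposed $G$-function $g(A):=f(A^T)\in\mathcal{G}_n$, via $\tilde{c}(S)=\tilde{r}(S^T)$ and $(Y^{-1}AY)^T=X^{-1}A^TX$ with $X=Y^{-1}$; but one must then reconcile the indexings of $\tilde{c}^Y(A)$ and $\tilde{r}^X(A^T)$, since the Frobenius forms of $A$ and $A^T$ use different permutations, so the direct mirror above seems cleaner.)
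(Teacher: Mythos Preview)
Your proposal is correct and follows exactly the route the paper indicates—namely, mirroring the proof in [52, p.143] with deleted column sums in place of deleted row sums, which you carry out via the transpose $N=M^T$ and the embedding/contradiction argument. The only step you leave terse, the strict positivity of $D$ on an irreducible block of size $\ge 2$, is true and can be filled either from Lemma~\ref{lem2-3} (which already gives $f_p(A)\ge \tilde r_p^X(A)>0$ there) or by the variant you allude to; alternatively, the standard formulation ``$(D-N)+tI$ nonsingular for all $t>0$'' bypasses the need for $D>0$ entirely.
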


The following generalized arithmetic-geometric mean inequality will be applied later.

\begin{lemma}[[2, (4.30)]]  \label{lem2-5}
If $x,y \ge 0$ and $0 \le\alpha \le 1$,
then
\begin{eqnarray*}
 \alpha x + (1-\alpha)y \ge x^{\alpha}y^{1-\alpha}.
 \end{eqnarray*}
\end{lemma}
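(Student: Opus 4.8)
The plan is to prove the two-point weighted arithmetic--geometric mean inequality by reducing it to a one-variable statement and then applying an elementary calculus argument (equivalently, the concavity of the logarithm).

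First I would dispose of the degenerate cases. If $\alpha = 0$ the claimed inequality reads $y \ge y$, and if $\alpha = 1$ it reads $x \ge x$; both hold trivially. So assume $0 < \alpha < 1$. Next, if $y = 0$, then since $1 - \alpha > 0$ we have $y^{1-\alpha} = 0$, so the right-hand side is $0$ while the left-hand side $\alpha x \ge 0$; similarly if $x = 0$ the right-hand side vanishes and the left-hand side $(1-\alpha)y \ge 0$. Hence we may henceforth assume $x > 0$ and $y > 0$.

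Now divide both sides by $y > 0$ and set $t := x/y > 0$. The inequality becomes $\alpha t + (1-\alpha) \ge t^{\alpha}$, so it suffices to show that the function $g(t) := \alpha t + (1-\alpha) - t^{\alpha}$ is nonnegative on $(0, +\infty)$. Observe that $g(1) = 0$ and $g'(t) = \alpha - \alpha t^{\alpha - 1} = \alpha\bigl(1 - t^{\alpha-1}\bigr)$. Since $0 < \alpha < 1$, the exponent $\alpha - 1$ is negative, so $t^{\alpha-1}$ is strictly decreasing in $t$; consequently $g'(t) < 0$ for $0 < t < 1$ and $g'(t) > 0$ for $t > 1$. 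Therefore $t = 1$ is the unique global minimizer of $g$ on $(0, +\infty)$, whence $g(t) \ge g(1) = 0$ for all $t > 0$. Tracing back the substitution gives $\alpha x + (1-\alpha) y \ge x^{\alpha} y^{1-\alpha}$, as desired.

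I expect no serious obstacle here: the only point requiring a little care is the bookkeeping of the boundary values $x = 0$ or $y = 0$ together with the endpoints $\alpha \in \{0,1\}$, where the expression $0^{1-\alpha}$ must be read as $0$. An alternative one-line argument avoiding calculus is to invoke the concavity of $\ln$ on $(0, +\infty)$, which yields $\ln\bigl(\alpha x + (1-\alpha)y\bigr) \ge \alpha \ln x + (1-\alpha)\ln y = \ln\bigl(x^{\alpha} y^{1-\alpha}\bigr)$ for $x, y > 0$, after which one exponentiates; the degenerate cases are handled exactly as above.
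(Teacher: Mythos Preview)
Your argument is correct and complete: the case analysis for $\alpha\in\{0,1\}$ and $x=0$ or $y=0$ is handled cleanly, and the reduction to the one-variable function $g(t)=\alpha t+(1-\alpha)-t^{\alpha}$ followed by the first-derivative test is a standard and fully rigorous way to establish the two-point weighted AM--GM inequality.

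As for comparison with the paper: the paper does not actually prove this lemma at all---it is stated with a citation to Beckenbach and Bellman, \textit{An Introduction to Inequalities} (reference~[2], equation~(4.30)), and no argument is supplied in the text. Your write-up therefore fills in what the paper leaves to the literature. Your treatment of the boundary convention $0^{0}=1$, $0^{\alpha}=0$ for $\alpha>0$ is also consistent with the convention the paper adopts explicitly later in Definition~3.3, so there is no friction on that point either.
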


Suppose that $g = [g_1,g_2, \cdots, g_n]^T, h = [h_1,h_2,
\cdots, h_n]^T \in \mathcal{F}_n$ and $0 \le\alpha \le 1$.
Their $\alpha$-convolution is defined as $f=g^{\alpha}h^{1-\alpha}$,
where $f = [f_1,f_2, \cdots, f_n]^T$ with $f_k(A)=g_k^{\alpha}(A)h_k^{1-\alpha}(A)$
for $k\in {\cal N}$. And their $\alpha$-weighted sum is defined as $\tilde{f}=\alpha g + (1-\alpha)h$,
where $\tilde{f} = [\tilde{f}_1,\tilde{f}_2, \cdots, \tilde{f}_n]^T$ with $\tilde{f}_k(A)=\alpha g_k(A) + (1-\alpha)h_k(A)$
for $k\in {\cal N}$. Clearly, $f,\tilde{f}\in \mathcal{F}_n$.

\begin{lemma}[[7, Theorem 1]]  \label{lem2-6}
If $g,h, \in \mathcal{G}_n$ and if $0 \le\alpha \le 1$,
then $f=g^{\alpha}h^{1-\alpha}\in \mathcal{G}_n$.
\end{lemma}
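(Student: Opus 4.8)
Proof proposal for Lemma 2.6 ($g, h \in \mathcal{G}_n$, $0 \le \alpha \le 1$ $\Rightarrow$ $f = g^\alpha h^{1-\alpha} \in \mathcal{G}_n$).

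The plan is to verify directly from Definition 2.8 that $f$ is a $G$-function. First I would note that $f \in \mathcal{F}_n$ is already granted by the discussion preceding the lemma, so the only thing to check is the nonsingularity implication: for an arbitrary $A = [a_{i,j}] \in \mathbb{C}^{n\times n}$, assume $|D(A)|e > f(A)$, i.e.
\begin{eqnarray*}
|a_{k,k}| > g_k^\alpha(A)\, h_k^{1-\alpha}(A), \quad k \in {\cal N},
\end{eqnarray*}
and conclude $A$ is nonsingular. The natural route is to produce, from this hypothesis, a genuine strict inequality of the type required by the definitions of $g$ and $h$ being $G$-functions — but applied to a \emph{scaled} matrix — and then invoke the strictly diagonally dominant theorem.

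The key device is scaling by a power of $|a_{k,k}|$. Suppose first that every $a_{k,k} \ne 0$ (the case where some diagonal entry vanishes I handle separately below, since then the strict inequality $|a_{k,k}| > g_k^\alpha h_k^{1-\alpha} \ge 0$ already forces $g_k(A) = 0$ or $h_k(A) = 0$, and in fact forces $|a_{k,k}|>0$, a contradiction — so actually the hypothesis already guarantees all diagonal entries are nonzero). Dividing the hypothesis through, I get $1 > (g_k(A)/|a_{k,k}|)^\alpha (h_k(A)/|a_{k,k}|)^{1-\alpha}$ for each $k$. Now apply Lemma 2.5 (the weighted AM--GM inequality) with $x = g_k(A)/|a_{k,k}|$, $y = h_k(A)/|a_{k,k}|$: this yields
\begin{eqnarray*}
\alpha \frac{g_k(A)}{|a_{k,k}|} + (1-\alpha)\frac{h_k(A)}{|a_{k,k}|} \ge \left(\frac{g_k(A)}{|a_{k,k}|}\right)^\alpha \left(\frac{h_k(A)}{|a_{k,k}|}\right)^{1-\alpha},
\end{eqnarray*}
but this alone does not obviously combine two separate $G$-function conditions. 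The cleaner argument is: since $1 > (g_k(A)/|a_{k,k}|)^\alpha (h_k(A)/|a_{k,k}|)^{1-\alpha}$, at least one of the two factors must be $< 1$ for each $k$ — but this "for each $k$, one of two" splitting does not directly give a uniform choice. So instead I would argue through an auxiliary scaling matrix. Since $g \in \mathcal{G}_n$, by Lemma 2.3 there is $X \in D^P_n$ with $g_k(A) \ge \tilde r_k^X(A)$ for all $k$; similarly for $h$ there is $X' \in D^P_n$. The obstacle is that these two scalings need not coincide, so a convex/geometric combination of the two weighted row-sum functions must be controlled simultaneously — which is precisely where I expect the real work to lie.

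The way to overcome this is to use a single scaling $Z = X^\alpha (X')^{1-\alpha} \in D^P_n$ (entrywise powers of the diagonal entries), and to observe that the weighted deleted row sum is \emph{multiplicative-convex} in the scaling: for each off-diagonal entry $|a_{k,j}|$, the weight $z_j/z_k = (x_j/x_k)^\alpha (x'_j/x'_k)^{1-\alpha}$, so by Lemma 2.5 applied termwise,
\begin{eqnarray*}
\tilde r_k^Z(A) = \frac{1}{z_k}\sum_{j \ne k} |a_{k,j}| z_j \le \alpha\, \tilde r_k^X(A) + (1-\alpha)\, \tilde r_k^{X'}(A) \le \alpha\, g_k(A) + (1-\alpha)\, h_k(A).
\end{eqnarray*}
Hmm — but I need to bound this by $g_k^\alpha h_k^{1-\alpha}$, not by the arithmetic mean; that goes the wrong way. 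So the correct termwise estimate is the geometric one: $z_j/z_k = (x_j/x_k)^\alpha (x'_j/x'_k)^{1-\alpha}$, hence by Hölder's inequality for sums (or Lemma 2.5 applied after normalizing), $\sum_j |a_{k,j}|(x_j/x_k)^\alpha (x'_j/x'_k)^{1-\alpha} \le \big(\sum_j |a_{k,j}| x_j/x_k\big)^\alpha \big(\sum_j |a_{k,j}| x'_j/x'_k\big)^{1-\alpha}$, i.e. $\tilde r_k^Z(A) \le (\tilde r_k^X(A))^\alpha (\tilde r_k^{X'}(A))^{1-\alpha} \le g_k^\alpha(A) h_k^{1-\alpha}(A) < |a_{k,k}|$. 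Thus $Z^{-1}AZ$ (restricted to each Frobenius block, via $\tilde r$) is strictly diagonally dominant, so by the strictly diagonally dominant theorem together with Lemma 2.1(i) each diagonal block, hence $A$ itself, is nonsingular. The main obstacle, as flagged, is reconciling the two independent scalings coming from Lemma 2.3; the resolution is the geometric-mean scaling $Z = X^\alpha (X')^{1-\alpha}$ combined with Hölder's inequality (equivalently, the termwise form of Lemma 2.5) to show the weighted row sums behave subadditively under this geometric interpolation.
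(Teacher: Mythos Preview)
The paper does not give its own proof of Lemma 2.6; it is simply quoted from Carlson--Varga [7, Theorem 1]. Your argument is correct and is in fact essentially the classical Carlson--Varga proof: take the two scalings $X,X' \in D^P_n$ supplied by Lemma 2.3 for $g$ and $h$, form the geometric-mean scaling $Z = X^\alpha (X')^{1-\alpha}$, and apply H\"older's inequality termwise to get
\[
\tilde r_k^Z(A) \le \bigl(\tilde r_k^X(A)\bigr)^\alpha \bigl(\tilde r_k^{X'}(A)\bigr)^{1-\alpha} \le g_k^\alpha(A)\,h_k^{1-\alpha}(A) < |a_{k,k}|,
\]
so each Frobenius block of $Z^{-1}AZ$ is strictly diagonally dominant and $A$ is nonsingular by Lemma 2.1(i).

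Two small remarks on presentation. First, the detours through the arithmetic-mean bound and the ``for each $k$, one of two'' splitting are dead ends (as you note yourself) and can simply be cut; the proof is just the $Z$-scaling plus H\"older. Second, the parenthetical ``equivalently, the termwise form of Lemma 2.5'' is a bit loose: Lemma 2.5 is the two-term weighted AM--GM, which underlies H\"older but is not the same statement. It would be cleaner to invoke H\"older directly (with exponents $1/\alpha$ and $1/(1-\alpha)$, handling $\alpha \in \{0,1\}$ as the trivial cases $f=h$ or $f=g$).
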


By Lemmas 2.5 and 2.6, the following lemma can be proved directly.

\begin{lemma}[[7, [p.100]] \label{lem2-7}
If $g,h, \in \mathcal{G}_n$ and if $0 \le\alpha \le 1$,
then $\tilde{f}=\alpha g + (1-\alpha)h \in \mathcal{G}_n$.
\end{lemma}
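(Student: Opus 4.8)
The plan is to derive Lemma \ref{lem2-7} as a direct consequence of the generalized arithmetic-geometric mean inequality (Lemma \ref{lem2-5}) together with the $\alpha$-convolution result (Lemma \ref{lem2-6}). The key observation is that the $\alpha$-weighted sum $\tilde{f} = \alpha g + (1-\alpha)h$ dominates the $\alpha$-convolution $f = g^{\alpha}h^{1-\alpha}$ pointwise, and that $G$-functions are "monotone upward" in an appropriate sense: if a matrix is forced to be nonsingular by the smaller function $f$, it is also forced to be nonsingular by the larger function $\tilde{f}$.

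First I would fix an arbitrary $A = [a_{i,j}] \in \mathbb{C}^{n\times n}$ and assume $|D(A)|e > \tilde{f}(A)$, i.e., $|a_{k,k}| > \alpha g_k(A) + (1-\alpha)h_k(A)$ for all $k \in {\cal N}$. Applying Lemma \ref{lem2-5} with $x = g_k(A) \ge 0$ and $y = h_k(A) \ge 0$ (nonnegativity holds since $g, h \in \mathcal{F}_n$), I get
\begin{eqnarray*}
\alpha g_k(A) + (1-\alpha)h_k(A) \ge g_k(A)^{\alpha}h_k(A)^{1-\alpha} = f_k(A), \quad k \in {\cal N}.
\end{eqnarray*}
Combining the two displayed chains of inequalities yields $|a_{k,k}| > f_k(A)$ for all $k$, that is, $|D(A)|e > f(A)$. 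Since $g, h \in \mathcal{G}_n$ and $0 \le \alpha \le 1$, Lemma \ref{lem2-6} tells us that $f = g^{\alpha}h^{1-\alpha} \in \mathcal{G}_n$, so the relation $|D(A)|e > f(A)$ implies $A$ is nonsingular. As $A$ was arbitrary, this shows that $|D(A)|e > \tilde{f}(A)$ implies nonsingularity, and together with the already-noted fact $\tilde{f} \in \mathcal{F}_n$ this means $\tilde{f} \in \mathcal{G}_n$.

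There is essentially no serious obstacle here: the argument is a short syllogism once one notices that the weighted sum majorizes the convolution. The only point requiring a moment's care is checking the hypotheses of Lemma \ref{lem2-5} — namely that $g_k(A)$ and $h_k(A)$ are genuinely nonnegative reals and finite, which is guaranteed by the definition of $\mathcal{F}_n$ — and confirming that $\tilde{f}$ itself lies in $\mathcal{F}_n$, which was remarked immediately before the statement of Lemma \ref{lem2-6}. Both are routine. Hence the entire proof is: majorize via AM-GM, then invoke the convolution lemma to transfer nonsingularity upward.
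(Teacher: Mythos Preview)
Your proof is correct and follows precisely the route the paper indicates: the paper states that Lemma \ref{lem2-7} ``can be proved directly'' from Lemmas \ref{lem2-5} and \ref{lem2-6}, and your argument---majorizing the $\alpha$-weighted sum by the $\alpha$-convolution via AM-GM, then invoking Lemma \ref{lem2-6}---is exactly that.
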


\newpage\markboth{}{}
\section{$G$-function pairs} \label{se3}

In this section, referring to [47], we propose a class of $G$-function pairs and discuss their properties.

We first give the definition and some basic results.

\begin{definition} \label{den3-1}
A function $F: \mathbb{R}_+^n\times \mathbb{R}_+^n\rightarrow \mathbb{R}_+^m$
for $m \ge 1$ is monotonic if $F(x,y) \ge F(u,v)$ for any $x, y, u, v\in
\mathbb{R}_+^n$ satisfying $x \ge u$ and $y \ge v$.
\end{definition}

\begin{definition}\label{den3-2}
A function pair $(g,h)\in \mathcal{F}_n\times \mathcal{F}_n$ is called an
$G$-function pair induced by a monotonic function
$F: \mathbb{R}_+^n\times \mathbb{R}_+^n\rightarrow \mathbb{R}_+^m$ for $m \ge 1$ if,
for any $A = [a_{i,j}] \in {\mathbb{C}^{n\times n}}$, the relations
\begin{eqnarray}\label{eqnn3-1}
F(|D(A)|e,|D(A)|e)>F(g(A),h(A))
\end{eqnarray}
implies that $A$ is nonsingular. The set of the $G$-function pair induced by $F$ is
denoted by $\mathcal{G}^F$.
\end{definition}

From the definition of $G$-function pairs, if $A\in {\mathbb{C}^{n\times n}}$ satisfies the inequality
(3.1) for some $(g,h)\in \mathcal{G}^F$, then
it is nonsingular. Furthermore, we prove the following strong results [47, Theorem 1.3.2].

\begin{theorem}\label{thm3.1}
If $A\in {\mathbb{C}^{n\times n}}$ satisfies the inequality
(3.1) for some $(g,h)\in \mathcal{G}^F$, then
$A\in GDD$.
\end{theorem}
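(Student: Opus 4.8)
The plan is to deduce $A\in GDD$ from the defining property of $\mathcal{G}^F$ by testing that property on a one-parameter family of matrices built from $A$ by enlarging its diagonal. I would start from the equivalence recalled after Definition~2.5: $A\in GDD$ if and only if $A$ is an $H$-matrix, i.e. the comparison matrix $\mathcal{M}(A)$ is an $M$-matrix in the sense of Definition~2.4. So the goal becomes to show that $\mathcal{M}(A)=\operatorname{diag}(|a_{1,1}|,\dots,|a_{n,n}|)-P$ is an $M$-matrix, where $P=[p_{i,j}]\ge 0$ with $p_{i,j}=|a_{i,j}|$ for $i\ne j$ and $p_{i,i}=0$. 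For this I would invoke the standard diagonal-shift characterization of nonsingular $M$-matrices among $Z$-matrices (see the cited monographs on $M$-matrices): writing $\mathcal{M}(A)=sI-B$ with $s=\max_i|a_{i,i}|\ge 0$ and $B\ge 0$, one has $\rho(B)<s$ (i.e. $\mathcal{M}(A)$ is an $M$-matrix) if and only if $\mathcal{M}(A)+tI=(s+t)I-B$ is nonsingular for every $t\ge 0$; the nontrivial implication is the contrapositive, where $\rho(B)\ge s$ forces $\mathcal{M}(A)+tI$ to be singular at $t=\rho(B)-s\ge 0$, since $\rho(B)\in\sigma(B)$ by the Perron--Frobenius theorem. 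Thus it suffices to prove that $B_t:=\mathcal{M}(A)+tI$ is nonsingular for every $t\ge 0$.

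Next I would fix $t\ge 0$ and set $B_t:=\mathcal{M}(A)+tI\in\mathbb{R}^{n\times n}$, so that $(B_t)_{i,i}=|a_{i,i}|+t$ and $(B_t)_{i,j}=-|a_{i,j}|$ for $i\ne j$, and then check that $B_t$ satisfies the hypothesis (3.1) of Definition~3.2 for the same pair $(g,h)$. Two observations drive this. First, $B_t$ has exactly the off-diagonal moduli of $A$, so, since $g,h\in\mathcal{F}_n$ depend only on the moduli of the off-diagonal entries (Definition~2.7(ii)), $g(B_t)=g(A)$ and $h(B_t)=h(A)$, hence $F(g(B_t),h(B_t))=F(g(A),h(A))$. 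Second, $|D(B_t)|e=|D(A)|e+t\,e\ge |D(A)|e$, so by monotonicity of $F$ (Definition~3.1), $F(|D(B_t)|e,|D(B_t)|e)\ge F(|D(A)|e,|D(A)|e)$. Chaining these with the assumed strict inequality (3.1) componentwise yields $F(|D(B_t)|e,|D(B_t)|e)\ge F(|D(A)|e,|D(A)|e)> F(g(A),h(A))= F(g(B_t),h(B_t))$, i.e. $B_t$ itself satisfies (3.1). Since $(g,h)\in\mathcal{G}^F$, Definition~3.2 gives that $B_t$ is nonsingular. As $t\ge 0$ was arbitrary, $\mathcal{M}(A)+tI$ is nonsingular for all $t\ge 0$, so $\mathcal{M}(A)$ is an $M$-matrix, $A$ is an $H$-matrix, and therefore $A\in GDD$.

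I expect the only genuine obstacle to be the first reduction, namely selecting the right equivalent form of ``$\mathcal{M}(A)$ is an $M$-matrix''. The essential point is that the test matrices $B_t$ must be allowed strictly larger diagonal moduli than $A$ while keeping the off-diagonal moduli unchanged; the diagonal-shift criterion is precisely what legitimizes this, and it is this flexibility, together with the monotonicity of $F$, that makes the argument run. Once that reduction is in place, everything else is a routine chaining of $|D(B_t)|e\ge |D(A)|e$, the monotonicity of $F$, and the strict inequality in (3.1), with the equalities $g(B_t)=g(A)$ and $h(B_t)=h(A)$ coming directly from $g,h\in\mathcal{F}_n$.
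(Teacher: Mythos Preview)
Your proof is correct and essentially identical to the paper's own argument: both replace $A$ by the diagonally shifted comparison matrix $A_t=\mathcal{M}(A)+tI$, use that $g,h\in\mathcal{F}_n$ depend only on off-diagonal moduli together with the monotonicity of $F$ to see that $A_t$ still satisfies (3.1) and is therefore nonsingular for all $t\ge 0$, and then conclude via Perron--Frobenius that $\rho(B)<s$ in the decomposition $\mathcal{M}(A)=sI-B$. The only cosmetic difference is that you fix $s=\max_i|a_{i,i}|$ explicitly, whereas the paper leaves the choice of $s$ implicit.
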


\begin{proof}
Let $\mathcal{M}(A) = sI - B$ with $s \ge 0$ and $B \ge
0$. For $t \ge 0$, let $A_t = tI + \mathcal{M}(A)$.

Since $D(A_t)e = D(\mathcal{M}(A))e + te = |D(A)|e + te \ge
|D(A)|e$, and the off-diagonal entries of $A_t$ and $A$ are same,
thus $A_t$ satisfies the inequality (3.1) from the
monotony of $F$. Therefore $A_t$ is nonsingular for any
$t \ge 0$.

Suppose $\rho(B) \ge s$. Then there exists $t_0 \ge 0$ such that
$\rho(B) = s + t_0$. This shows that $A_{t_0} = (s+t_0)I - B$ is
singular and contradictory. Thus we have proved $\rho(B) < s$, so that
$\mathcal{M}(A)$ is an $M$-matrix and $A\in GDD$.
\end{proof}

The following theorem provides a criterion of choosing
$F$ to ensure meaningfulness and generality of the
concept of the $G$-function pairs [47, Theorem 1.3.4].

\begin{theorem}\label{thm3.2}
$\mathcal{G}_n\times \mathcal{G}_n \subseteq
\mathcal{G}^F$ iff $(\tilde{r}^X, \tilde{r}^Y)\in
\mathcal{G}^F$ for arbitrary $X,Y\in D^P_n$.
\end{theorem}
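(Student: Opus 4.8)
The plan is to prove the two implications separately. The forward direction is essentially trivial: if $\mathcal{G}_n \times \mathcal{G}_n \subseteq \mathcal{G}^F$, then since Lemma~\ref{lem2-2}(ii) gives $\tilde{r}^X, \tilde{r}^Y \in \mathcal{G}_n$ for any $X, Y \in D^P_n$, we immediately get $(\tilde{r}^X, \tilde{r}^Y) \in \mathcal{G}^F$. So all the work is in the converse: assuming $(\tilde{r}^X, \tilde{r}^Y) \in \mathcal{G}^F$ for all $X, Y \in D^P_n$, show that an arbitrary pair $(g,h) \in \mathcal{G}_n \times \mathcal{G}_n$ lies in $\mathcal{G}^F$.

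For the converse, fix $(g,h) \in \mathcal{G}_n \times \mathcal{G}_n$ and suppose $A = [a_{i,j}] \in \mathbb{C}^{n\times n}$ satisfies $F(|D(A)|e, |D(A)|e) > F(g(A), h(A))$. I want to conclude $A$ is nonsingular. First I would apply Lemma~\ref{lem2-3} to $g$: there exists $X \in D^P_n$ (depending on $A$) with $g_k(A) \ge \tilde{r}^X_k(A)$ for all $k$, i.e. $g(A) \ge \tilde{r}^X(A)$. Similarly, applying Lemma~\ref{lem2-3} to $h$ — or Lemma~\ref{lem2-4} if one prefers column sums, but here the statement is phrased with $\tilde{r}^Y$ so I use Lemma~\ref{lem2-3} again — there exists $Y \in D^P_n$ with $h(A) \ge \tilde{r}^Y(A)$. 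Now invoke the monotonicity of $F$ (Definition~\ref{den3-1}): since $g(A) \ge \tilde{r}^X(A)$ and $h(A) \ge \tilde{r}^Y(A)$, we have $F(g(A), h(A)) \ge F(\tilde{r}^X(A), \tilde{r}^Y(A))$. Chaining this with the hypothesis gives
\[
F(|D(A)|e, |D(A)|e) > F(g(A), h(A)) \ge F(\tilde{r}^X(A), \tilde{r}^Y(A)).
\]
Hence $A$ satisfies the defining inequality (3.1) for the pair $(\tilde{r}^X, \tilde{r}^Y)$, which by assumption belongs to $\mathcal{G}^F$; therefore $A$ is nonsingular. Since $A$ was arbitrary subject to (3.1), this shows $(g,h) \in \mathcal{G}^F$, completing the converse.

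The main subtlety — really the only place one must be careful — is that the diagonal matrices $X$ and $Y$ produced by Lemma~\ref{lem2-3} depend on $A$, but that is harmless: the definition of $\mathcal{G}^F$ quantifies over all $A$, and for each such $A$ we are free to pick whichever member of $\mathcal{G}^F$ (i.e. whichever $(\tilde{r}^X, \tilde{r}^Y)$) witnesses nonsingularity. A second point worth stating explicitly is the direction of the inequality sign when composing with monotonicity: the strict inequality on the left and the non-strict inequality $F(g(A),h(A)) \ge F(\tilde{r}^X(A), \tilde{r}^Y(A))$ combine to a strict inequality $F(|D(A)|e,|D(A)|e) > F(\tilde{r}^X(A), \tilde{r}^Y(A))$, which is exactly the form (3.1) requires. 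No further estimates are needed; the argument is a clean application of Lemmas~\ref{lem2-2} and~\ref{lem2-3} together with the monotonicity built into Definition~\ref{den3-2}.
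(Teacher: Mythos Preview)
Your proof is correct and follows essentially the same approach as the paper's own proof: the forward direction via Lemma~\ref{lem2-2}, and the converse via Lemma~\ref{lem2-3} applied separately to $g$ and $h$, followed by monotonicity of $F$ to reduce to the hypothesis on $(\tilde{r}^X,\tilde{r}^Y)$. Your additional remarks on the dependence of $X,Y$ on $A$ and on combining the strict and non-strict inequalities are accurate and mirror exactly what the paper does implicitly.
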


\begin{proof}
Assume that $\mathcal{G}_n\times
\mathcal{G}_n \subseteq \mathcal{G}^F$. By Lemma 2.2,
$\tilde{r}^X \in \mathcal{G}_n$ for any $X \in D^P_n$. Hence we
have $(\tilde{r}^X, \tilde{r}^Y)\in \mathcal{G}^F$ for
any $X,Y\in D^P_n$.

Conversely, suppose that $(\tilde{r}^X, \tilde{r}^Y)\in \mathcal{G}^F$ for any $X,Y\in D^P_n$.
For any given $(g,h)\in \mathcal{G}_n\times \mathcal{G}_n$, assume that $A\in \mathbb{C}^{n\times n}$ satisfies
\begin{eqnarray*}
F(|D(A)|e,|D(A)|e) > F(g(A),h(A)).
\end{eqnarray*}

From Lemma 2.3, there exist $X,Y \in D^P_n$ (depending
on $A$) such that $g(A) \ge \tilde{r}^X(A)$ and $h(A) \ge
\tilde{r}^Y(A)$, so that
\begin{eqnarray*}
F(|D(A)|e,|D(A)|e) > F(\tilde{r}^X(A),\tilde{r}^Y(A))
\end{eqnarray*}
from the monotony of $F$.
Since $(\tilde{r}^X, \tilde{r}^Y)\in \mathcal{G}^F$,
it follows that $A$ is nonsingular. Thus $(g,h)\in
\mathcal{G}^F$, so that $\mathcal{G}_n\times
\mathcal{G}_n \subseteq \mathcal{G}^F$.
\end{proof}

Similarly, using Lemma 2.4, we can prove the following theorem.

\begin{theorem}\label{thm3.3}
The following conclusions are valid:

\begin{itemize}
\item[\rm(i)]
$\mathcal{G}_n \times \mathcal{G}_n \subseteq \mathcal{G}^F$ iff
$(\tilde{c}^X,\tilde{c}^Y)\in \mathcal{G}^F$ for arbitrary $X,Y\in D^P_n$;

\item[\rm(ii)]
$\mathcal{G}_n \times \mathcal{G}_n \subseteq \mathcal{G}^F$ iff
$(\tilde{r}^X,\tilde{c}^Y)\in \mathcal{G}^F$ for arbitrary $X,Y\in D^P_n$;

\item[\rm(iii)]
$\mathcal{G}_n \times \mathcal{G}_n \subseteq \mathcal{G}^F$ iff
$(\tilde{c}^X,\tilde{r}^Y)\in \mathcal{G}^F$ for arbitrary $X,Y\in D^P_n$.
\end{itemize}
\end{theorem}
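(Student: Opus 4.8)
The plan is to imitate the proof of Theorem \ref{thm3.2} almost verbatim for each of the three parts, the only change being which of Lemma \ref{lem2-3} and Lemma \ref{lem2-4} is invoked in each coordinate slot. For the forward implication in (i), (ii) and (iii) I would simply observe that, by Lemma \ref{lem2-2}(ii), $\tilde{r}^X, \tilde{c}^X \in \mathcal{G}_n$ for every $X \in D^P_n$; hence if $\mathcal{G}_n \times \mathcal{G}_n \subseteq \mathcal{G}^F$ then in particular every pair of the indicated form lies in $\mathcal{G}^F$. This direction is immediate and carries no content.

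For the converse of (i): fix $(g,h)\in \mathcal{G}_n\times \mathcal{G}_n$ and suppose $A\in\mathbb{C}^{n\times n}$ satisfies $F(|D(A)|e,|D(A)|e) > F(g(A),h(A))$. Apply Lemma \ref{lem2-4} to $g$ and then to $h$ to obtain $X,Y\in D^P_n$ (depending on $A$) with $g(A)\ge \tilde{c}^X(A)$ and $h(A)\ge \tilde{c}^Y(A)$. By the monotonicity of $F$ this gives $F(|D(A)|e,|D(A)|e) > F(\tilde{c}^X(A),\tilde{c}^Y(A))$, and since by hypothesis $(\tilde{c}^X,\tilde{c}^Y)\in \mathcal{G}^F$ it follows that $A$ is nonsingular. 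Hence $(g,h)\in\mathcal{G}^F$, proving $\mathcal{G}_n\times\mathcal{G}_n\subseteq\mathcal{G}^F$. For the converse of (ii) the argument is identical except that I would feed $g$ through Lemma \ref{lem2-3} (yielding $g(A)\ge \tilde{r}^X(A)$) and $h$ through Lemma \ref{lem2-4} (yielding $h(A)\ge \tilde{c}^Y(A)$); for (iii) I would simply swap these two roles. In all cases monotonicity of $F$ reduces the hypothesis on $A$ to the corresponding mixed pair, whose membership in $\mathcal{G}^F$ closes the argument.

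I do not expect a genuine obstacle here: the statement is a routine transcription of Theorem \ref{thm3.2}, and all the work is already done by Lemmas \ref{lem2-2}, \ref{lem2-3}, \ref{lem2-4} together with the monotonicity built into Definition \ref{den3-2}. The one point deserving care is purely a matter of quantifier bookkeeping: in the converse directions the scaling matrices $X$ and $Y$ supplied by Lemmas \ref{lem2-3}--\ref{lem2-4} depend on $A$, so the hypotheses must be assumed for arbitrary $X,Y\in D^P_n$ — which is precisely the form in which they are stated.
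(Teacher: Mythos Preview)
Your proposal is correct and matches the paper's intended approach exactly: the paper does not write out a proof but simply states that the result follows ``similarly, using Lemma~\ref{lem2-4}'', i.e., by rerunning the argument of Theorem~\ref{thm3.2} with Lemma~\ref{lem2-4} (and the appropriate mix of Lemmas~\ref{lem2-3} and~\ref{lem2-4} for parts (ii) and (iii)) in place of Lemma~\ref{lem2-3}. Your remark about the $A$-dependence of $X,Y$ and the need for the hypothesis to hold for arbitrary $X,Y\in D^P_n$ is precisely the right bookkeeping observation.
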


Now, we propose some kinds of $G$-function pairs.
Their properties and characteristics are studied, and their relations with
$G$-functions are discussed.

\begin{definition}   \label{den3-3}
For $x=(x_1,x_2,\cdots,x_n)^T, y=(y_1,y_2,\cdots, y_n)^T \in \mathbb{R}_+^n$
and $0 \le \alpha,\beta \le 1$,
we define monotonic functions $F_{\mu,\alpha}, F_{\nu,\alpha,\beta}: \mathbb{R}_+^n\times \mathbb{R}_+^n\rightarrow \mathbb{R}_+^{n(n-1)}$, $\mu=1,\cdots,7$, $\nu = 8, \cdots, 13$,
as follows:

\begin{itemize}
\item[]
$F_{1,\alpha}(x,y) = [x_i^{\alpha}y_j^{1-\alpha},  \;\; i,j\in {\cal N}, i\not= j]$;

\item[]
$F_{2,\alpha}(x,y) = [(x_ix_j)^{\alpha}(y_iy_j)^{1-\alpha}, \;\; i,j\in {\cal N}, i\not= j]$;

\item[]
$F_{3,\alpha}(x,y) = [(x_iy_j)^{\alpha}(x_jy_i)^{1-\alpha},  \;\; i,j\in {\cal N}, i\not= j]$;

\item[]
$F_{4,\alpha}(x,y) = [\alpha x_ix_j + (1-\alpha)y_iy_j,  \;\; i,j\in {\cal N}, i\not= j]$;

\item[]
$F_{5,\alpha}(x,y) = [\alpha x_iy_j + (1-\alpha)x_jy_i,  \;\; i,j\in {\cal N}, i\not= j]$;

\item[]
$F_{6,\alpha}(x,y) = [(\alpha x_i+(1-\alpha)y_i)(\alpha x_j + (1-\alpha)y_j),  \;\; i,j\in {\cal N}, i\not= j]$;

\item[]
$F_{7,\alpha}(x,y) = [(\alpha x_i+(1-\alpha)y_i)(\alpha y_j + (1-\alpha)x_j),  \;\; i,j\in {\cal N}, i\not= j]$;

\item[]
$F_{8,\alpha,\beta}(x,y) = [(x_i^{\beta}y_i^{1-\beta})^\alpha (x_j^{\beta}y_j^{1-\beta})^{1-\alpha},  \;\; i,j\in {\cal N}, i\not= j]$;

\item[]
$F_{9,\alpha,\beta}(x,y) = [(x_i^{\beta}y_i^{1-\beta})^\alpha (y_j^{\beta}x_j^{1-\beta})^{1-\alpha},  \;\; i,j\in {\cal N}, i\not= j]$;

\item[]
$F_{10,\alpha,\beta}(x,y) = [\beta x_i^\alpha x_j^{1-\alpha} + (1-\beta)y_i^\alpha y_j^{1-\alpha},  \;\; i,j\in {\cal N}, i\not= j]$;

\item[]
$F_{11,\alpha,\beta}(x,y) = [\beta x_i^\alpha y_j^{1-\alpha} + (1-\beta)y_i^\alpha x_j^{1-\alpha},  \;\; i,j\in {\cal N}, i\not= j]$;

\item[]
$F_{12,\alpha,\beta}(x,y) = [(\beta x_i+(1-\beta)y_i)^\alpha(\beta x_j + (1-\beta)y_j)^{1-\alpha},  \;\; i,j\in {\cal N}, i\not= j]$;

\item[]
$F_{13,\alpha,\beta}(x,y) = [(\beta x_i+(1-\beta)y_i)^\alpha(\beta y_j + (1-\beta)x_j)^{1-\alpha},  \;\; i,j\in {\cal N}, i\not= j]$.
\end{itemize}
Here we assume
\begin{eqnarray*}
0^{\alpha}=
\begin{cases}
1, & if \; \alpha=0,\\
0, & if \; \alpha>0.
\end{cases}
\end{eqnarray*}

Accordingly, the set of the $G$-function pairs induced by $F_{\mu,\alpha}$ and $F_{\nu,\alpha,\beta}$ are respectively recorded as
$\mathcal{G}^F_{\mu,\alpha}$ and $\mathcal{G}^F_{\nu,\alpha,\beta}$, $\mu=1,\cdots,7$, $\nu=8,\cdots,13$.
\end{definition}

When $\alpha$ and $\beta$ take some special values, $\mathcal{G}^F_{\mu,\alpha}$ and
$\mathcal{G}^F_{\nu,\alpha,\beta}$ may be equal for different $\mu$ and $\nu$.

We give an important result, which shows that
$\mathcal{G}^F_{\mu,\alpha}$ and $\mathcal{G}^F_{\nu,\alpha,\beta}$ for any
$\mu=1,\cdots,7$, $\nu=8,\cdots,13$ and $\alpha, \beta \in [0 ,1]$ are not empty sets, but meaningful.

\begin{theorem}\label{thm3.4}
Suppose that $0 \le \alpha,\beta \le 1$, $\mathcal{G}^F_{\mu,\alpha}$ and
$\mathcal{G}^F_{\nu,\alpha,\beta}$ are defined by Definition 3.3, $\mu=1,\cdots,7$, $\nu=8,\cdots,13$.
Then $\mathcal{G}_n \times \mathcal{G}_n$ is a proper subset of
$\mathcal{G}^F_{\mu,\alpha}$ and $\mathcal{G}^F_{\nu,\alpha,\beta}$.
\end{theorem}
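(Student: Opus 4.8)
The statement has two parts: (a) $\mathcal{G}_n\times\mathcal{G}_n\subseteq\mathcal{G}^F_{\mu,\alpha}$ and $\mathcal{G}_n\times\mathcal{G}_n\subseteq\mathcal{G}^F_{\nu,\alpha,\beta}$ for all the listed indices, and (b) the inclusion is proper. For part (a), the plan is to invoke Theorem~\ref{thm3.2}: it suffices to show that for arbitrary $X,Y\in D^P_n$ the pair $(\tilde r^X,\tilde r^Y)$ lies in $\mathcal{G}^F_{\mu,\alpha}$ (resp.\ $\mathcal{G}^F_{\nu,\alpha,\beta}$). So fix $X=\mathrm{diag}[x_1,\dots,x_n]$, $Y=\mathrm{diag}[y_1,\dots,y_n]$ in $D^P_n$ and suppose $A=[a_{i,j}]\in\mathbb{C}^{n\times n}$ satisfies $F(|D(A)|e,|D(A)|e)>F(\tilde r^X(A),\tilde r^Y(A))$ for one of the thirteen functions $F$. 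Writing $p_i:=|a_{i,i}|$, $u_i:=\tilde r^X_i(A)$, $v_i:=\tilde r^Y_i(A)$, the hypothesis is a family of scalar inequalities indexed by ordered pairs $(i,j)$, $i\ne j$; I must derive that $A$ is nonsingular.

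**Reduction to a single weighted-row-sum condition.** The key observation is that each of the thirteen $F$'s is built so that the hypothesis $F_{\bullet}(p,p) > F_{\bullet}(u,v)$ (componentwise over $i\ne j$) forces, for each fixed $i$, the existence of a positive weight vector $z=(z_1,\dots,z_n)$ with $p_i z_i > \sum_{j\ne i}|a_{i,j}| z_j$ — i.e.\ $A\in GDD$ via Definition~2.2, hence nonsingular by the strictly diagonally dominant theorem. For the product-type functions ($F_1,F_2,F_3,F_8,F_9$) this is where Lemma~2.5 (the AM–GM inequality) does the work: e.g.\ for $F_{1,\alpha}$ one has $p_i^\alpha p_j^{1-\alpha} > u_i^\alpha v_j^{1-\alpha}$; setting $w_i := p_i^\alpha v_i^{1-\alpha}/(u_i^\alpha\,p_i^{1-\alpha})$ hmm — more cleanly, one checks directly that the diagonal scaling $Z$ interpolating between $X$ and $Y$, namely $z_i := x_i^{?}y_i^{?}$ with exponents read off from the structure of $F$, converts the componentwise hypothesis into $|D(Z^{-1}AZ)|e > \tilde r(Z^{-1}AZ)$, so $Z^{-1}AZ\in SDD$ and we are done by Lemma~2.1(ii) together with the $SDD$ theorem. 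For the sum-type functions ($F_4,\dots,F_7,F_{10},\dots,F_{13}$) the same works, using Lemma~2.5 once more to pass from an $\alpha$-weighted sum down to the corresponding $\alpha$-convolution, exactly as Lemma~2.7 is deduced from Lemmas~2.5–2.6; so the sum-type cases reduce to the product-type ones. In every case the upshot is: the hypothesis implies $A$ is nonsingular, hence $(\tilde r^X,\tilde r^Y)\in\mathcal{G}^F_{\bullet}$, hence by Theorem~\ref{thm3.2} the asserted inclusion holds. (The convention $0^\alpha$ handles the boundary $\alpha\in\{0,1\}$ and the cases $p_i=0$ or $u_i=0$ without incident.)

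**Properness.** To show the inclusion is strict I must exhibit, for each $\mu$ and $\nu$, a pair $(g,h)\in\mathcal{G}^F_{\mu,\alpha}\setminus(\mathcal{G}_n\times\mathcal{G}_n)$; equivalently a pair where $g$ or $h$ fails to be a $G$-function yet the induced nonsingularity criterion still holds. The natural candidate is to take $h$ (or $g$) to be a function that is "too small to be a $G$-function on its own" but whose smallness is compensated by the other coordinate through $F$ — for instance, for $F_{1,\alpha}$ with $\alpha>0$ take $g=r$ (the ordinary row-sum, a $G$-function) and let $h$ be, say, the zero function or a drastic underestimate of the row sums: then $|D(A)|e>g(A)$ already controls the $i$-coordinate of $F_{1,\alpha}(|D(A)|e,|D(A)|e)$ via its $\alpha$-th power, so nonsingularity follows even though $h\notin\mathcal{G}_n$. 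One has to be slightly careful at the endpoints $\alpha=0$ or $\alpha=1$ where $F$ degenerates to depending on only one of its two vector arguments; there I would instead perturb the coordinate that $F$ does see, choosing a $G$-function in that slot and an arbitrary non-$G$-function in the inert slot, which trivially lands outside $\mathcal{G}_n\times\mathcal{G}_n$ while preserving the criterion. Assembling a uniform choice that works simultaneously for all thirteen $F$'s (or handling them in the obvious groups) completes the argument.

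**Main obstacle.** The routine but delicate part is the bookkeeping in the reduction step: for each of the thirteen $F$'s one must correctly identify the interpolating diagonal scaling $Z$ (with entries a monomial $x_i^{\,e}y_i^{\,f}$ determined by the exponents and the $i/j$ roles in that particular $F$) and verify that the componentwise hypothesis really does collapse to an $SDD$ condition for $Z^{-1}AZ$ — the asymmetric pairs $F_3,F_5,F_7,F_9,F_{11},F_{13}$, which swap the $i$- and $j$-roles between the two arguments, need the swapped weights $z_i=x_i^{?}y_i^{?}$ versus $z_j=y_j^{?}x_j^{?}$ and are the most error-prone. Once Lemma~2.5 is in hand, though, each verification is a one-line application of AM–GM, so no genuinely new idea is required beyond what Theorems~3.1–3.3 and Lemmas~2.2–2.7 already supply.
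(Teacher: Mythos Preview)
Your reduction in part (a) has a genuine gap. You claim that for each $F_{\mu,\alpha}$ there is an ``interpolating'' diagonal scaling $Z$ with $z_i = x_i^{e}y_i^{f}$ (exponents depending only on $\alpha,\beta$) such that the componentwise hypothesis $F(|D(A)|e,|D(A)|e)>F(\tilde r^X(A),\tilde r^Y(A))$ collapses to $Z^{-1}AZ\in SDD$. This is false already in the simplest instance: take $\mu=1$, $\alpha=\tfrac12$, $X=Y=I$. Your monomial forces $Z=I$, so you are asserting that the classical Brauer--Ostrowski condition $|a_{i,i}||a_{j,j}|>r_i(A)r_j(A)$ for all $i\ne j$ implies $A\in SDD$. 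A $3\times 3$ example with $|a_{1,1}|=2$, $r_1=3$, $|a_{2,2}|=|a_{3,3}|=10$, $r_2=r_3=1$ satisfies the Brauer inequalities but has $|a_{1,1}|<r_1$. More generally, no scaling with entries that are fixed monomials in $x_i,y_i$ (hence independent of $A$) can work, because the correct scaling is data-dependent.

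The paper handles the base case $\mu=1$ by an ordering argument that you are missing: reduce to $A$ irreducible, set $p_i=r_i^X(A)/|a_{i,i}|$ and $q_j=r_j^Y(A)/|a_{j,j}|$, observe $p_i^{\alpha}q_j^{1-\alpha}<1$ for $i\ne j$, sort both sequences, and split into cases. If $\max_i p_i<1$ or $\max_j q_j<1$ one has $SDD$ with respect to $X$ or $Y$. Otherwise both maxima are $\ge 1$ and must occur at the \emph{same} index (say $1$); then from $(p_1p_2)^{\alpha}(q_1q_{j_2})^{1-\alpha}<1$ one gets $p_1p_2<1$ or $q_1q_{j_2}<1$, and only now is an explicit scaling $D=\mathrm{diag}[d,1,\dots,1]$ with $p_1<d<1/p_2$ (or the analogous column version) constructed. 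The remaining cases $\mu=2,\dots,7$ and $\nu=8,\dots,13$ are then reduced to $\mu=1$ not by diagonal scalings but by forming the composite $G$-functions $g^{\alpha}h^{1-\alpha}$, $h^{\alpha}g^{1-\alpha}$, $\beta g+(1-\beta)h$ via Lemmas~2.5--2.7, exactly as you suspected for the sum-type cases; the point is that the product-type base case itself is not a one-line AM--GM computation.

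Your properness sketch also needs repair: taking $h\equiv 0$ for $F_{1,\alpha}$ with $0<\alpha<1$ makes $F_{1,\alpha}(g,h)\equiv 0$, so the hypothesis degenerates to $|a_{i,i}|^{\alpha}|a_{j,j}|^{1-\alpha}>0$, which does not force nonsingularity (e.g.\ $A=\bigl(\begin{smallmatrix}1&2\\1&2\end{smallmatrix}\bigr)$). The paper instead uses balanced scalings $g=(1+r_\delta)^{-1/\alpha}r$, $h=(1+r_\delta)^{1/(1-\alpha)}r$ so that $F_{1,\alpha}(g,h)=F_{1,\alpha}(r,r)$, placing $(g,h)$ in $\mathcal{G}^F_{1,\alpha}$ while $g\notin\mathcal{G}_n$; the other cases require their own tailored pairs.
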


\begin{proof}
We first prove $\mathcal{G}_n\times \mathcal{G}_n \subseteq
\mathcal{G}^F_{\mu,\alpha}\cap\mathcal{G}^F_{\nu,\alpha,\beta}$, $\mu=1,\cdots,7$, $\nu=8,\cdots,13$.
In other words we need to prove that, for any $(g,h)\in \mathcal{G}_n \times \mathcal{G}_n$
and for each $\mu=1,\cdots,7$, $\nu=8,\cdots,13$, the condition
\begin{eqnarray} \label{eqnn3-2}
F_{\mu,\alpha}(|D(A)|e,|D(A)|e) > F_{\mu,\alpha}(g(A), h(A))
\end{eqnarray}
or
\begin{eqnarray} \label{eqnn3-3}
F_{\nu,\alpha,\beta}(|D(A)|e,|D(A)|e) > F_{\nu,\alpha,\beta}(g(A), h(A))
\end{eqnarray}
implies $A$ being nonsingular for any $A\in \mathbb{C}^{n\times n}$.

When $\mu=1$, by Theorem 3.2, we just need to prove
$(\tilde{r}^X, \tilde{r}^Y)\in \mathcal{G}^F_{1,\alpha}$,
for any $X,Y\in D^P_n$.

Without loss of generality, we assume that $A$ is irreducible.
Otherwise, by Lemma 2.1, we can prove that $A_{k,k}$ in the Frobenius normal form of $A$ is
nonsingular for $k = 1, \cdots, l$.

In this case (3.2) reduces to
\begin{eqnarray*}
F_{1,\alpha}(|D(A)|e,|D(A)|e) > F_{1,\alpha}(r^X(A), r^Y(A)),
\end{eqnarray*}
i.e.,
\begin{eqnarray}\label{eqnn3-5}
|a_{i,i}|^{\alpha}|a_{j,j}|^{1-\alpha}> [r_i^X(A)]^{\alpha}
[r^Y_j(A)]^{1-\alpha}, \; i,j\in {\cal N}, i\not= j.
\end{eqnarray}

We set $p_i= r_i^X(A)/|a_{i,i}|$ and $q_j= r_j^Y(A)/|a_{j,j}|$ for
all $i,j\in {\cal N}$. Then, from (3.4) and the
irreducibility of $A$, we have
\begin{eqnarray}\label{eqnn3-6}
0 < p_i^{\alpha}q_j^{1-\alpha} < 1, \; i,j\in {\cal N}, i\not= j.
\end{eqnarray}

Suppose $p_{i_1} \ge p_{i_2} \ge \cdots \ge p_{i_n}$ and
$q_{j_1} \ge q_{j_2} \ge \cdots \ge q_{j_n}$. Then $p_{i_n} > 0$
and $q_{j_n} > 0$.

If $p_{i_1}<1$ or $q_{j_1}<1$, then $A\in SDD$ and therefore $A$
is nonsingular.

If $p_{i_1} \ge 1$ and $q_{j_1} \ge 1$, then
$p_{i_1}^{\alpha}q_{j_1}^{(1-\alpha)} \ge 1$, so that, by
(3.5), $i_1=j_1$. Without loss of generality, suppose
$i_k=k$ for all $k\in {\cal N}$. Now, from
\begin{eqnarray*}
(p_1p_2)^{\alpha}(q_1q_{j_2})^{1-\alpha} =
(p_1^{\alpha}q_{j_2}^{1-\alpha}) (p_2^{\alpha}q_1^{1-\alpha}) < 1,
\end{eqnarray*}
it gets that
\begin{eqnarray*}
 \min\{p_1p_2, \; q_{1}q_{j_2}\}< 1.
\end{eqnarray*}

For the case when $p_1p_2 < 1$, let $D=diag[d, 1,\cdots,1]$
with $p_1 < d < 1/p_2$. Then $d > 1$, $D \in D^P_n$ and
\begin{eqnarray*}
\frac{r_1^{(XD)}(A)}{|a_{1,1}|} = d^{-1}\cdot\frac{r_1^{X}(A)}{|a_{1,1}|} = \frac{p_1}{d} < 1,
\end{eqnarray*}
and for $i \ge 2$,
\begin{eqnarray*}
\frac{r_i^{(XD)}(A)}{|a_{i,i}|} &=& \frac{1}{x_i}\left[d|a_{i,1}|x_1+\sum_{k\in
{\cal N}\setminus \{1,i\}}|a_{i,k}|x_k\right]\cdot \frac{1}{|a_{i,i}|}\\
& \le & d\cdot \frac{r_i^{X}(A)}{|a_{i,i}|} = dp_i \le dp_2 < 1.
\end{eqnarray*}
This has shown that $(XD)^{-1}A(XD) \in SDD$ and therefore $A$ is
nonsingular.

For the case when $q_{1}q_{j_2} < 1$ the proof is similar.

We have proved
$\mathcal{G}_n \times \mathcal{G}_n \subseteq \mathcal{G}^F_{1,\alpha}$.

When $\mu=2$, then (3.2) reduces to
\begin{eqnarray}\label{eqnn3-8}
|a_{i,i}||a_{j,j}| > [g_i(A)g_j(A)]^{\alpha}
[h_i(A)h_j(A)]^{1-\alpha}, \; i,j\in {\cal N}, i\not= j,
\end{eqnarray}
which can be rewritten as
\begin{eqnarray*}
|a_{i,i}||a_{j,j}| > [g_i^{\alpha}(A)h_i^{1-\alpha}(A)]
[g_j^{\alpha}(A)h_j^{1-\alpha}(A)], \; i,j\in {\cal N}, i\not= j.
\end{eqnarray*}

Let $\tilde{f} = g^{\alpha}h^{1-\alpha}$. Then we have $(\tilde{f},\tilde{f})\in \mathcal{G}_n \times \mathcal{G}_n$ and
\begin{eqnarray*}
|a_{i,i}||a_{j,j}| > \tilde{f}_i(A)\tilde{f}_j(A), \; i,j\in {\cal N}, i\not= j,
\end{eqnarray*}
which is equivalent to
\begin{eqnarray*}
F_{1,\frac{1}{2}}(|D(A)|e,|D(A)|e) > F_{1,\frac{1}{2}}(\tilde{f}(A), \tilde{f}(A)).
\end{eqnarray*}
Hence $A$ is nonsingular, so that
$\mathcal{G}_n \times \mathcal{G}_n \subseteq \mathcal{G}^F_{2,\alpha}$.

When $\mu=3$, then (3.2) reduces to
\begin{eqnarray*}
|a_{i,i}||a_{j,j}| > [g_i(A)h_j(A)]^{\alpha}
[g_j(A)h_i(A)]^{1-\alpha}, \; i,j\in {\cal N}, i\not= j,
\end{eqnarray*}
which can be rewritten as
\begin{eqnarray*}
|a_{i,i}||a_{j,j}| > [g_i^{\alpha}(A)h_i^{1-\alpha}(A)]
[h_j^{\alpha}(A)g_j^{1-\alpha}(A)], \; i,j\in {\cal N}, i\not= j.
\end{eqnarray*}
Let $\hat{f} = g^{\alpha}h^{1-\alpha}$ and $\check{f} = h^{\alpha}g^{1-\alpha}$. Then
 $\hat{f}, \check{f} \in \mathcal{G}_n$,
so that $(\hat{f},\check{f})\in \mathcal{G}_n \times \mathcal{G}_n$.
Now, we have
\begin{eqnarray*}
|a_{i,i}||a_{j,j}| > \hat{f}_i(A)\check{f}_j(A), \; i,j\in {\cal N}, i\not= j,
\end{eqnarray*}
which implies
\begin{eqnarray*}
F_{1,\frac{1}{2}}(|D(A)|e,|D(A)|e) > F_{1,\frac{1}{2}}(\hat{f}(A), \check{f}(A)).
\end{eqnarray*}
Hence $A$ is nonsingular, so that
$\mathcal{G}_n \times \mathcal{G}_n \subseteq \mathcal{G}^F_{3,\alpha}$.

When $\mu=4$, then (3.2) reduces to
\begin{eqnarray*}
|a_{i,i}||a_{j,j}| > \alpha g_i(A)g_j(A) + (1-\alpha)h_i(A)h_j(A), \; i,j\in {\cal N}, i\not= j.
\end{eqnarray*}
By Lemma 2.5, for $i,j\in {\cal N}$, we have
\begin{eqnarray*}
\alpha g_i(A)g_j(A) + (1-\alpha)h_i(A)h_j(A) \ge [g_i(A)g_j(A)]^{\alpha}[h_i(A)h_j(A)]^{1-\alpha},
\end{eqnarray*}
so that (3.6) holds. By the proof for $\mu=2$ we have proved that
$A$ is nonsingular, and therefore, $\mathcal{G}_n \times \mathcal{G}_n \subseteq \mathcal{G}^F_{4,\alpha}$.

Similarly, by Lemma 2.5 and the proofs for $\mu=2,3$ we can prove
$\mathcal{G}_n \times \mathcal{G}_n \subseteq \mathcal{G}^F_{\mu,\alpha}$, $\mu=5,6,7$.

When $\nu=8$, then (3.3) reduces to
\begin{eqnarray*}
|a_{i,i}|^\alpha|a_{j,j}|^{1-\alpha} >
[g_i^{\beta}(A)h_i^{1-\beta}(A)]^\alpha[g_j^{\beta}(A)h_j^{1-\beta}(A)]^{1-\alpha}, \;
 i,j\in {\cal N}, i\not= j.
\end{eqnarray*}
Let $\breve{f} = g^{\beta}h^{1-\beta}$. Then $\breve{f} \in \mathcal{G}_n$ and
\begin{eqnarray*}
|a_{i,i}|^\alpha|a_{j,j}|^{1-\alpha} >
\breve{f}_i^\alpha(A)\breve{f}_j^{1-\alpha}(A), \;\; i,j\in {\cal N}, i\not= j,
\end{eqnarray*}
i.e.,
\begin{eqnarray*}
F_{1,\alpha}(|D(A)|e,|D(A)|e) > F_{1,\alpha}(\breve{f}(A), \breve{f}(A)).
\end{eqnarray*}
Hence $A$ is nonsingular, so that
$\mathcal{G}_n \times \mathcal{G}_n \subseteq \mathcal{G}^F_{8,\alpha,\beta}$.

When $\nu=9$, then (3.3) reduces to
\begin{eqnarray*}
|a_{i,i}|^\alpha|a_{j,j}|^{1-\alpha} >
[g_i^{\beta}(A)h_i^{1-\beta}(A)]^\alpha[h_j^{\beta}(A)g_j^{1-\beta}(A)]^{1-\alpha}, \;
i,j\in {\cal N}, i\not= j.
\end{eqnarray*}
Let $\acute{f} = g^{\beta}h^{1-\beta}$, $\grave{f} = h^{\beta}g^{1-\beta}$.
Then $\acute{f}, \grave{f} \in \mathcal{G}_n$ and
\begin{eqnarray*}
|a_{i,i}|^\alpha|a_{j,j}|^{1-\alpha} >
\acute{f}_i^\alpha(A)\grave{f}_j^{1-\alpha}(A), \;\; i,j\in {\cal N}, i\not= j,
\end{eqnarray*}
i.e.,
\begin{eqnarray*}
F_{1,\alpha}(|D(A)|e,|D(A)|e) > F_{1,\alpha}(\acute{f}(A), \grave{f}(A)).
\end{eqnarray*}
Hence $A$ is nonsingular, so that
$\mathcal{G}_n \times \mathcal{G}_n \subseteq \mathcal{G}^F_{9,\alpha,\beta}$.

When $\nu=10$, then (3.3) reduces to
\begin{eqnarray*}
&& |a_{i,i}|^\alpha|a_{j,j}|^{1-\alpha} >
\beta g_i^\alpha(A)g_j(A)^{1-\alpha} + (1-\beta)h_i^\alpha(A)h_j^{1-\alpha}(A), \\
&& i,j\in {\cal N}, i\not= j.
\end{eqnarray*}
By Lemma 2.5, it gets that
\begin{eqnarray*}
|a_{i,i}|^\alpha|a_{j,j}|^{1-\alpha}
& > & [g_i^\alpha(A)g_j^{1-\alpha}(A)]^{\beta}[h_i^\alpha(A)h_j^{1-\alpha}(A)]^{1-\beta}\\
 & = & [g_i^{\beta}(A)h_i^{1-\beta}(A)]^\alpha[g_j^{\beta}(A)h_j^{1-\beta}(A)]^{1-\alpha}, \\
&& i,j\in {\cal N}, i\not= j.
\end{eqnarray*}
By the proof for $\nu = 8$ we have proved that $A$ is nonsingular, so that
$\mathcal{G}_n \times \mathcal{G}_n \subseteq \mathcal{G}^F_{10,\alpha,\beta}$.

Similarly, we can prove
$\mathcal{G}_n \times \mathcal{G}_n \subseteq \mathcal{G}^F_{11,\alpha,\beta}$.

When $\nu=12$, then (3.3) reduces to
\begin{eqnarray*}
|a_{i,i}|^\alpha|a_{j,j}|^{1-\alpha} &>&
[\beta g_i(A) + (1-\beta)h_i(A)]^\alpha[\beta g_j(A) + (1-\beta)h_j(A)]^{1-\alpha}, \\
&& i,j\in {\cal N}, i\not= j.
\end{eqnarray*}
Let $\bar{f} = \beta g + (1-\beta)h$. Then by Lemma 2.7 it gets that
$\bar{f} \in \mathcal{G}_n$ and
\begin{eqnarray*}
|a_{i,i}|^\alpha|a_{j,j}|^{1-\alpha} > [\bar{f}_i(A)]^{\alpha}[\bar{f}_j(A)]^{1-\alpha}, \; i,j\in {\cal N}, i\not= j,
\end{eqnarray*}
i.e.,
\begin{eqnarray*}
F_{1,\alpha}(|D(A)|e,|D(A)|e) > F_{1,\alpha}(\bar{f}(A), \bar{f}(A)).
\end{eqnarray*}
Hence $A$ is nonsingular, so that
$\mathcal{G}_n \times \mathcal{G}_n \subseteq \mathcal{G}^F_{12,\alpha,\beta}$.

Similarly, we can prove
$\mathcal{G}_n \times \mathcal{G}_n \subseteq \mathcal{G}^F_{13,\alpha,\beta}$.

Next, we prove that for each $\mu \in \{1,\cdots,7\}$ and $\nu \in \{8,\cdots,13\}$,
there exists $(g,h) \notin \mathcal{G}_n \times \mathcal{G}_n$,
but $(g,h) \in \mathcal{G}^F_{\mu,\alpha}$ or $(g,h) \in \mathcal{G}^F_{\nu,\alpha,\beta}$.

Let $r_\delta = \max_{k \in {\cal N}} r_k$.

We first consider the case when $0 < \alpha < 1$ for $\mu =1,2,4,6$ and
$0 < \beta < 1$ for $\nu = 8,10,12$. In this case
let $g^{(\alpha)} = (1+r_\delta)^{-1/\alpha}r$ and $h^{(\alpha)} = (1+r_\delta)^{1/(1-\alpha)}r$.
Then
\begin{eqnarray*}
&& F_{1,\alpha}(g^{(\alpha)},h^{(\alpha)}) = F_{1,\alpha}(r,r),\\
&& F_{k,\alpha}(g^{(\alpha)},h^{(\alpha)}) \ge F_{2,\alpha}(g^{(\alpha)},h^{(\alpha)})
= F_{2,\alpha}(r,r), \; k = 4,6.
\end{eqnarray*}
Hence, it is easy to see that $(g^{(\alpha)}, h^{(\alpha)}) \in \mathcal{G}^F_{\mu,\alpha}$,
$\mu =1,2,4,6$.

While, for $n=3$ let
\begin{eqnarray*}
A = \left[\begin{array}{ccc}
2  & -1 & -1\\
-1 &  2 & -1\\
-1 & -1 & 2
\end{array}\right].
\end{eqnarray*}
Then $A$ is singular. But we have
\begin{eqnarray*}
|a_{i,i}| = 2 > \frac{2}{3^\frac{1}{\alpha}} = g_i^{(\alpha)}(A), \; i=1,2,3.
\end{eqnarray*}
This shows that $g^{(\alpha)} \notin \mathcal{G}_n$, so that
$(g^{(\alpha)},h^{(\alpha)}) \notin \mathcal{G}_n \times \mathcal{G}_n$.

Completely same, for $0 < \beta < 1$,
let $g^{(\beta)} = (1+r_\delta)^{-1/\beta}r$ and $h^{(\beta)} = (1+r_\delta)^{1/(1-\beta)}r$.
Then $(g^{(\beta)},h^{(\beta)}) \notin \mathcal{G}_n \times \mathcal{G}_n$. While, from
\begin{eqnarray*}
&& F_{k,\alpha,\beta}(g^{(\beta)},h^{(\beta)}) \ge F_{8,\alpha,\beta}(g^{(\beta)},h^{(\beta)})
= F_{1,\alpha}(r,r), \; k =10,12,
\end{eqnarray*}
it is easy to see that $(g^{(\beta)},h^{(\beta)}) \in \mathcal{G}^F_{\nu,\alpha,\beta}$,
$\nu =8,10,12$.

Next, we consider the case when $\alpha = 1$ or $0$, $\beta = 1$ or $0$.
By [52, p.132, Remark 4], $\mathcal{G}_n$ is a proper subset of $\mathcal{F}_n$.
Hence there exists $h_0$ such that $h_0 \in \mathcal{F}_n$ and $h_0 \notin \mathcal{G}_n$.
Then for any $g \in \mathcal{G}_n$, $(g, h_0) \notin \mathcal{G}_n \times \mathcal{G}_n$ and
$(h_0, g) \notin \mathcal{G}_n \times \mathcal{G}_n$.
While, since
\begin{eqnarray*}
&& F_{1,1}(g,h_0) = F_{1,0}(h_0,g) = F_{1,1}(g,g),\\
&& F_{\mu,1}(g,h_0) = F_{\mu,0}(h_0,g) = F_{2,1}(g,g), \; \mu = 2,4,6,\\
&& F_{\nu,\alpha,1}(g,h_0) = F_{\nu,\alpha,0}(h_0,g) = F_{1,\alpha}(g,g), \; \nu = 8,10,12,
\end{eqnarray*}
it follows that $(g, h_0) \in \mathcal{G}^F_{\mu,1}, \mathcal{G}^F_{\nu,\alpha,1}$ and
$(h_0, g) \in \mathcal{G}^F_{\mu,0}, \mathcal{G}^F_{\nu,\alpha,0}$, $\mu = 1,2,4,6$, $\nu = 8,10,12$.

Let $\tilde{g} = (1+r_\delta)^{-1}r$ and $\tilde{h} = (1+r_\delta)r$. Then
for $0 \le \alpha,\beta \le 1$ we have
\begin{eqnarray*}
F_{7,\alpha}(\tilde{g},\tilde{h}) \ge
F_{3,\alpha}(\tilde{g},\tilde{h}) = F_{5,\alpha}(\tilde{g},\tilde{h})
= F_{2,1}(r,r).
\end{eqnarray*}
It is easy to prove that $(\tilde{g}, \tilde{h}) \in \mathcal{G}^F_{\mu,\alpha}$,
$\mu = 3,5,7$,
and $(\tilde{g},\tilde{h}) \notin \mathcal{G}_n \times \mathcal{G}_n$, because $\tilde{g} \notin \mathcal{G}_n$.

Up to now, we have prove that $\mathcal{G}_n \times \mathcal{G}_n$ is a proper subset of
$\mathcal{G}^F_{\mu,\alpha}$ and $\mathcal{G}^F_{\nu,\alpha,\beta}$ for $\mu=1,\cdots,7$, $\nu=8,10,12$.

At last, we consider the cases when $\nu=9,11,13$. When $\beta = 1$ then it is easy to prove that $\mathcal{G}^F_{9,\alpha,1} = \mathcal{G}^F_{11,\alpha,1} = \mathcal{G}^F_{13,\alpha,1} = \mathcal{G}^F_{1,\alpha}$.
This shows that $\mathcal{G}_n \times \mathcal{G}_n$ is a proper subset of
$\mathcal{G}^F_{\nu,\alpha,1}$, $\nu = 9,11,13$.

For the case when $\beta = 0$, then we have
$F_{9,\alpha,0}(x,y) = F_{11,\alpha,0}(x,y) = F_{13,\alpha,0}(x,y)
= [y_i^{\alpha}x_j^{1-\alpha},  \;\; i,j\in {\cal N}, i\not= j]= F_{1,\alpha}(y,x)$.
Completely similar to the proof for $\mu=1$ it can be proved that
$\mathcal{G}_n \times \mathcal{G}_n$ is a proper subset of
$\mathcal{G}^F_{\nu,\alpha,0}$, $\nu = 9,11,13$.

Now, we assume that $0 < \beta < 1$. Then $\alpha + \beta - 2\alpha\beta > 0$ and
$1 - \alpha - \beta + 2\alpha\beta > 0$. Let
\begin{eqnarray*}
\eta = \frac{\alpha + \beta - 2\alpha\beta}{1 - \alpha - \beta + 2\alpha\beta}.
\end{eqnarray*}
Then $\eta > 0$. Let $\hat{g} = (1+r_\delta)^{-\eta}r$. Then it is easy to prove that
$\hat{g} \notin \mathcal{G}_n$ so that $(\hat{g},\tilde{h}) \notin \mathcal{G}_n \times \mathcal{G}_n$.
While, since
\begin{eqnarray*}
F_{k,\alpha,\beta}(\hat{g},\tilde{h}) \ge F_{9,\alpha,\beta}(\hat{g},\tilde{h})
= F_{1,\alpha}(r,r), \; k = 11,13,
\end{eqnarray*}
then
$(\hat{g},\tilde{h}) \in \mathcal{G}^F_{\nu,\alpha,\beta}$, $\nu = 9,11,13$.
Hence $\mathcal{G}_n \times \mathcal{G}_n$ is a proper subset of
$\mathcal{G}^F_{\nu,\alpha,\beta}$, $\nu = 9,11,13$.
\end{proof}

From this theorem and Lemma 2.2, the following corollary is obvious.

\begin{corollary}\label{thm3.5}
For $\mu=1,\cdots,7$, $\nu=8,\cdots,13$ and $0 \le \alpha,\beta \le 1$, it gets that
$(g,h) \in \mathcal{G}^F_{\mu,\alpha}\cap\mathcal{G}^F_{\nu,\alpha,\beta}$, for
$g, h \in \{r,\tilde{r},c,\tilde{c},r^X,r^Y,c^X,c^Y,\tilde{r}^X,\tilde{r}^Y,\tilde{c}^X,\tilde{c}^Y\}$
and for any $X,Y\in D^P_n$.
\end{corollary}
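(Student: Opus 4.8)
The plan is to derive the corollary directly from Theorem 3.4 together with Lemma 2.2, exactly as the sentence preceding the statement suggests. The key observation is that each of the twelve functions listed, namely $r,\tilde{r},c,\tilde{c},r^X,r^Y,c^X,c^Y,\tilde{r}^X,\tilde{r}^Y,\tilde{c}^X,\tilde{c}^Y$, belongs to $\mathcal{G}_n$. For $r,c,\tilde{r},\tilde{c}$ this is Lemma 2.2(i), and for the weighted versions $r^X,c^X,\tilde{r}^X,\tilde{c}^X$ (and likewise with $Y$ in place of $X$) it is Lemma 2.2(ii), valid for any $X,Y\in D^P_n$. Consequently, for any choice of $g$ and $h$ from this set one has $(g,h)\in\mathcal{G}_n\times\mathcal{G}_n$.

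First I would invoke Theorem 3.4, which asserts that $\mathcal{G}_n\times\mathcal{G}_n$ is a proper subset of $\mathcal{G}^F_{\mu,\alpha}$ for every $\mu\in\{1,\dots,7\}$ and of $\mathcal{G}^F_{\nu,\alpha,\beta}$ for every $\nu\in\{8,\dots,13\}$, whenever $0\le\alpha,\beta\le 1$. In particular $\mathcal{G}_n\times\mathcal{G}_n\subseteq\mathcal{G}^F_{\mu,\alpha}\cap\mathcal{G}^F_{\nu,\alpha,\beta}$. Combining this inclusion with the membership $(g,h)\in\mathcal{G}_n\times\mathcal{G}_n$ established in the previous step immediately yields $(g,h)\in\mathcal{G}^F_{\mu,\alpha}\cap\mathcal{G}^F_{\nu,\alpha,\beta}$, which is the assertion of the corollary.

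Since the argument is merely a chain of set inclusions, there is essentially no obstacle; the only point meriting a word of care is the parameter range. The proof of Theorem 3.4 already treats all of $0\le\alpha,\beta\le 1$, including the boundary values $\alpha,\beta\in\{0,1\}$ where several of the induced function sets coincide, so no separate verification is needed here. One could optionally add the remark that, because each $F_{\mu,\alpha}$ and $F_{\nu,\alpha,\beta}$ is monotonic, Theorem 3.1 upgrades the conclusion further: for such a pair $(g,h)$, any $A\in\mathbb{C}^{n\times n}$ satisfying the corresponding instance of inequality (3.1) is not merely nonsingular but in fact lies in $GDD$.
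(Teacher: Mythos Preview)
Your proposal is correct and matches the paper's own reasoning: the paper simply states that the corollary is obvious from Theorem 3.4 together with Lemma 2.2, and your argument spells out exactly this chain of inclusions.
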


\newpage \markboth{}{}
\section{Some criteria for generalized
strictly diagonally dominant matrices} \label{se4}
\markboth{}{\it 4\quad SOME CRITERIA FOR GDD}

In this section, using $G$-function pairs, we derive some known and new critical conditions about generalized
diagonally dominant matrices.

\begin{theorem}\label{thm4-1}
Let $A = [a_{i,j}] \in \mathbb{C}^{n\times n}$. Then $A \in GDD$
if and only if there exist $g, h \in \mathcal{G}_n$ and $\alpha,\beta \in [0,1]$
such that, for all $i,j\in {\cal N}$, $i\not= j$, one of the following conditions holds:

\begin{itemize}
\item[\rm(1)] $|a_{i,i}| > g_i(A)$;

\item[\rm(2)] $|a_{i,i}| > g_i^{\alpha}(A)h_i^{1-\alpha}(A)$;

\item[\rm(3)] $|a_{i,i}| > \alpha g_i(A) + (1-\alpha)h_i(A)$;

\item[\rm(4)] $|a_{i,i}||a_{j,j}| > g_i(A)g_j(A)$;

\item[\rm(5)] $|a_{i,i}||a_{j,j}| > g_i(A)h_j(A)$;

\item[\rm(6)] $|a_{i,i}||a_{j,j}| >
[g_i(A)g_j(A)]^{\alpha}[h_i(A)h_j(A)]^{1-\alpha}$;

\item[\rm(7)] $|a_{i,i}||a_{j,j}| >
[g_i(A)h_j(A)]^{\alpha}[g_j(A)h_i(A)]^{1-\alpha}$;

\item[\rm(8)] $|a_{i,i}||a_{j,j}| >
\alpha g_i(A)g_j(A) + (1-\alpha)h_i(A)h_j(A)$;

\item[\rm(9)] $|a_{i,i}||a_{j,j}| >
\alpha g_i(A)h_j(A) + (1-\alpha)g_j(A)h_i(A)$;

\item[\rm(10)]
$|a_{i,i}||a_{j,j}| >
 [\alpha g_i(A)+(1-\alpha)h_i(A)][\alpha g_j(A) + (1-\alpha)h_j(A)]$;

\item[\rm(11)]
$|a_{i,i}||a_{j,j}| >
 [\alpha g_i(A)+(1-\alpha)h_i(A)][\alpha h_j(A) + (1-\alpha)g_j(A)]$;

 \item[\rm(12)]
$|a_{i,i}|^{\alpha}|a_{j,j}|^{1-\alpha} > g_i^{\alpha}(A)g_j^{1-\alpha}(A)$;

 \item[\rm(13)]
$|a_{i,i}|^{\alpha}|a_{j,j}|^{1-\alpha} > g_i^{\alpha}(A)h_j^{1-\alpha}(A)$;

\item[\rm(14)]
$|a_{i,i}|^{\alpha}|a_{j,j}|^{1-\alpha} > [g_i^{\beta}(A) h_i^{1-\beta}(A)]^\alpha [g_j^{\beta}(A)h_j^{1-\beta}(A)]^{1-\alpha}$;

\item[\rm(15)]
$|a_{i,i}|^{\alpha}|a_{j,j}|^{1-\alpha} > [g_i^{\beta}(A)h_i^{1-\beta}(A)]^\alpha [h_j^{\beta}(A) g_j^{1-\beta}(A)]^{1-\alpha}$;

\item[\rm(16)]
$|a_{i,i}|^{\alpha}|a_{j,j}|^{1-\alpha} > \beta g_i^\alpha(A)g_j^{1-\alpha}(A) + (1-\beta)h_i^\alpha(A) h_j^{1-\alpha}(A)$;

\item[\rm(17)]
$|a_{i,i}|^{\alpha}|a_{j,j}|^{1-\alpha} > \beta g_i^\alpha(A) h_j^{1-\alpha}(A) + (1-\beta)h_i^\alpha(A) g_j^{1-\alpha}(A)$;

\item[\rm(18)]
$|a_{i,i}|^{\alpha}|a_{j,j}|^{1-\alpha} > [\beta g_i(A)+(1-\beta)h_i(A)]^\alpha[\beta g_j(A) + (1-\beta)h_j(A)]^{1-\alpha}$;

\item[\rm(19)]
$|a_{i,i}|^{\alpha}|a_{j,j}|^{1-\alpha} > [\beta g_i(A)+(1-\beta)h_i(A)]^\alpha[\beta h_j(A) + (1-\beta)g_j(A)]^{1-\alpha}$.
\end{itemize}
\end{theorem}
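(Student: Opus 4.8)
The plan is to read each of the nineteen inequalities as a single instance of the scheme $F(|D(A)|e,|D(A)|e)>F(g'(A),h'(A))$, where $F$ is one of the monotonic functions of Definition 3.3 with its parameters suitably specialized and $g',h'\in\mathcal{G}_n$, and then to apply Theorems 3.1 and 3.4.

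The necessity is immediate: if $A\in GDD$, then by Definition 2.2 there is $X\in D^P_n$ with $|a_{i,i}|>r^X_i(A)$ for all $i\in{\cal N}$, and $r^X\in\mathcal{G}_n$ by Lemma 2.2(ii). Taking $g=h=r^X$ and, say, $\alpha=\beta=1/2$, condition (1) holds for every $i\neq j$, so one of the nineteen conditions holds.

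For the sufficiency I would proceed by cases on which inequality is assumed, naming in each case the function $F$ and the pair $(g',h')$ that put the assumed inequality in the form $F(|D(A)|e,|D(A)|e)>F(g'(A),h'(A))$. For (13) take $F=F_{1,\alpha}$ with $g'=g,h'=h$, and for (12) the same $F$ with $h'=g$; for (6)--(11) take $F=F_{2,\alpha},\dots,F_{7,\alpha}$ respectively with $g'=g,h'=h$; for (14)--(19) take $F=F_{8,\alpha,\beta},\dots,F_{13,\alpha,\beta}$ respectively with $g'=g,h'=h$; for (5) take $F=F_{3,1}$ with $g'=g,h'=h$; for (4) take $F=F_{2,1}$ with $g'=h'=g$; for (1) take $F=F_{1,1}$ with $g'=g$ and $h'$ any member of $\mathcal{G}_n$. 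Conditions (2) and (3) are most transparently handled by first forming $f=g^{\alpha}h^{1-\alpha}\in\mathcal{G}_n$ (Lemma 2.6) and $f=\alpha g+(1-\alpha)h\in\mathcal{G}_n$ (Lemma 2.7) respectively, after which both reduce to the $F_{1,1}$ relation with $g'=h'=f$; alternatively one may use $F=F_{8,1,\alpha}$ and $F=F_{10,1,\alpha}$ directly with $g'=g,h'=h$. In every case the only thing to verify is that $F(|D(A)|e,|D(A)|e)$ collapses to the diagonal expression on the left of the assumed inequality --- for instance $[|a_{i,i}|\,|a_{j,j}|]$ for $F_{2,\alpha}$ and $[|a_{i,i}|^{\alpha}|a_{j,j}|^{1-\alpha}]$ for $F_{1,\alpha}$, $F_{8,\alpha,\beta}$, and the like --- which is just the substitution $x=y=|D(A)|e$ in the formulas of Definition 3.3, using the convention $0^0=1$. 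Once the inequality is in the form $F(|D(A)|e,|D(A)|e)>F(g'(A),h'(A))$ with $g',h'\in\mathcal{G}_n$, Theorem 3.4 gives $(g',h')\in\mathcal{G}_n\times\mathcal{G}_n\subseteq\mathcal{G}^F$, and Theorem 3.1 then yields $A\in GDD$.

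I expect the main difficulty to be organizational rather than analytic: correctly matching all nineteen inequalities with their functions $F$ and parameter specializations, and noting that the degenerate cases (1), (4), (12) and the convolution cases (2), (3) still fit the scheme --- the former because $\mathcal{G}^F$ is defined for every parameter value in $[0,1]$ (endpoints included) and for $g'=h'$, and the latter because $\mathcal{G}_n$ is closed under $\alpha$-convolution and $\alpha$-weighted sum. Apart from this bookkeeping the theorem is a direct consequence of Theorems 3.1 and 3.4 and Lemma 2.2.
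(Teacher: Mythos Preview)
Your proposal is correct and follows essentially the same approach as the paper: both prove necessity by exhibiting explicit $G$-functions (you take $g=h=r^X$, the paper takes $g=r^X$, $h=c^Y$) and prove sufficiency by recognizing each inequality (1)--(19) as an instance of $F(|D(A)|e,|D(A)|e)>F(g'(A),h'(A))$ for one of the $F_{\mu,\alpha}$, $F_{\nu,\alpha,\beta}$ of Definition~3.3, then invoking Theorems~3.1 and~3.4. The only differences are cosmetic choices of parameter specialization (e.g.\ the paper handles (12) via $F_{10,\alpha,1}$ rather than $F_{1,\alpha}$ with $h'=g$, and (3) via $F_{12,1,\alpha}$ rather than $F_{10,1,\alpha}$), all of which are equally valid.
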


\begin{proof}
First we prove the necessity. Assume that $A \in GDD$. Then there
exist $X, Y\in D^P_n$ such that
 \begin{eqnarray*}
|a_{k,k}| > r_k^X(A), \;
|a_{k,k}| > c_k^Y(A), \; k \in {\cal N}.
\end{eqnarray*}
Let $g=r^X$ and $h=c^Y$. Then by Lemma 2.2, it follow that $g, h \in \mathcal{G}_n$.
The inequalities (1)-(19) are now easy to derive.

Next we prove the sufficiency. When one of (1)-(19) holds, then
by Theorem 3.1 we just need to prove
either there exist $\mu \in \{1,\cdots,7\}$ and $\alpha \in [0,1]$ such that
\begin{eqnarray}\label{eqn4.1}
F_{\mu,\alpha}(|D(A)|e,|D(A)|e) > F_{\mu,\alpha}(g(A),h(A))
\end{eqnarray}
or there exist $\nu \in \{8,\cdots,13\}$ and $\alpha,\beta \in [0,1]$ such that
\begin{eqnarray}\label{eqn4.2}
F_{\nu,\alpha,\beta}(|D(A)|e,|D(A)|e) > F_{\nu,\alpha,\beta}(g(A),h(A)).
\end{eqnarray}

For $\mu=1, 2$ and 3, let $\alpha = 1$. Then (4.1) reduces to (1), (4) and (5), respectively.

Similarly, for $\nu$ equals 8 and 12, let $\alpha = 1$ and $\beta = \alpha$. Then (4.2) reduces to (2) and (3), respectively.
While, for $\nu$ equals 10, set $\beta = 1$, then (4.2) reduces to (12).

Let $\mu=2,\cdots,7$, $\mu=1$ and $\nu =8,\cdots,13$. Then (4.1) is equivalent to
(6), $\cdots$, (11),(13), $\cdots$, (19), respectively.

The proof is complete.
\end{proof}

Clearly, in the theorem $g, h \in \mathcal{G}_n$ can be changed into $(g,h) \in \mathcal{G}^F_{\mu,\alpha}$
or $(g,h) \in \mathcal{G}^F_{\nu,\alpha,\beta}$ for the corresponding $\mu$ and $\nu$.

In [9, Lemma 3.1] the equivalence among (1), (4) and $H$-matrix is given.

The condition (2) for nonsingularity of $A$ is given by A. J. Hoffman [7, Theorem 1].

\begin{theorem}\label{thm4-2}
Let $A = [a_{i,j}] \in \mathbb{C}^{n\times n}$. Then $A \in GDD$
if and only if there exist $X, Y\in D^P_n$ and $\alpha,\beta \in [0,1]$
such that, for all $i,j\in {\cal N}$, $i\not= j$, one of the following conditions holds:

\begin{itemize}
\item[\rm(1)] $|a_{i,i}| > \tilde{r}_i^X(A)$;

\item[\rm(2)] $|a_{i,i}| > \tilde{c}_i^Y(A)$;

\item[\rm(3)] $|a_{i,i}| > [\tilde{r}_i^X(A)]^{\alpha}[\tilde{c}_i^Y(A)]^{1-\alpha}$;

\item[\rm(4)] $|a_{i,i}| > \alpha\tilde{r}_i^X(A) + (1-\alpha)\tilde{c}_i^Y(A)$;

\item[\rm(5)] $|a_{i,i}||a_{j,j}| > \tilde{r}_i^X(A)\tilde{r}_j^X(A)$;

\item[\rm(6)] $|a_{i,i}||a_{j,j}| > \tilde{c}_i^Y(A)\tilde{c}_j^Y(A)$;

\item[\rm(7)] $|a_{i,i}||a_{j,j}| > \tilde{r}_i^X(A)\tilde{c}_j^Y(A)$;

\item[\rm(8)] $|a_{i,i}||a_{j,j}| >
[\tilde{r}_i^X(A)\tilde{r}_j^X(A)]^{\alpha}[\tilde{c}_i^Y(A)\tilde{c}_j^Y(A)]^{1-\alpha}$;

\item[\rm(9)] $|a_{i,i}||a_{j,j}| >
[\tilde{r}_i^X(A)\tilde{c}_j^Y(A)]^{\alpha}[\tilde{r}_j^X(A)\tilde{c}_i^Y(A)]^{1-\alpha}$;

\item[\rm(10)] $|a_{i,i}||a_{j,j}| >
\alpha\tilde{r}_i^X(A)\tilde{r}_j^X(A) + (1-\alpha)\tilde{c}_i^Y(A)\tilde{c}_j^Y(A)$;

\item[\rm(11)] $|a_{i,i}||a_{j,j}| >
\alpha\tilde{r}_i^X(A)\tilde{c}_j^Y(A) + (1-\alpha)\tilde{r}_j^X(A)\tilde{c}_i^Y(A)$;

\item[\rm(12)]
$|a_{i,i}||a_{j,j}| >
 [\alpha\tilde{r}_i^X(A)+(1-\alpha)\tilde{c}_i^Y(A)][\alpha\tilde{r}_j^X(A) + (1-\alpha)\tilde{c}_j^Y(A)]$;

\item[\rm(13)]
$|a_{i,i}||a_{j,j}| >
 [\alpha\tilde{r}_i^X(A)+(1-\alpha)\tilde{c}_i^Y(A)][\alpha\tilde{c}_j^Y(A) + (1-\alpha)\tilde{r}_j^X(A)]$;

 \item[\rm(14)]
$|a_{i,i}|^{\alpha}|a_{j,j}|^{1-\alpha} > [\tilde{r}_i^X(A)]^{\alpha}[\tilde{r}_j^X(A)]^{1-\alpha}$;

\item[\rm(15)] $|a_{i,i}|^{\alpha}|a_{j,j}|^{1-\alpha} > [\tilde{c}_i^Y(A)]^{\alpha}[\tilde{c}_j^Y(A)]^{1-\alpha}$;

\item[\rm(16)] $|a_{i,i}|^{\alpha}|a_{j,j}|^{1-\alpha} >
[\tilde{r}_i^X(A)]^{\alpha}[\tilde{c}_j^Y(A)]^{1-\alpha}$;

\item[\rm(17)]
$|a_{i,i}|^{\alpha}|a_{j,j}|^{1-\alpha} > ([\tilde{r}_i^X(A)]^{\beta} [\tilde{c}_i^Y(A)]^{1-\beta})^\alpha ([\tilde{r}_j^X(A)]^{\beta}[\tilde{c}_j^Y(A)]^{1-\beta})^{1-\alpha}$;

\item[\rm(18)]
$|a_{i,i}|^{\alpha}|a_{j,j}|^{1-\alpha} > ([\tilde{r}_i^X(A)]^{\beta}[\tilde{c}_i^Y(A)]^{1-\beta})^\alpha ([\tilde{c}_j^Y(A)]^{\beta} [\tilde{r}_j^X(A)]^{1-\beta})^{1-\alpha}$;

\item[\rm(19)]
$|a_{i,i}|^{\alpha}|a_{j,j}|^{1-\alpha} > \beta [\tilde{r}_i^X(A)]^{\alpha}[\tilde{r}_j^X(A)]^{1-\alpha} + (1-\beta)[\tilde{c}_i^Y(A)]^{\alpha} [\tilde{c}_j^Y(A)]^{1-\alpha}$;

\item[\rm(20)]
$|a_{i,i}|^{\alpha}|a_{j,j}|^{1-\alpha} > \beta [\tilde{r}_i^X(A)]^{\alpha} [\tilde{c}_j^Y(A)]^{1-\alpha} + (1-\beta)[\tilde{c}_i^Y(A)]^{\alpha} [\tilde{r}_j^X(A)]^{1-\alpha}$;

\item[\rm(21)]
$|a_{i,i}|^{\alpha}|a_{j,j}|^{1-\alpha} > [\beta \tilde{r}_i^X(A)+(1-\beta)\tilde{c}_i^Y(A)]^\alpha[\beta \tilde{r}_j^X(A) + (1-\beta)\tilde{c}_j^Y(A)]^{1-\alpha}$;

\item[\rm(22)]
$|a_{i,i}|^{\alpha}|a_{j,j}|^{1-\alpha} > [\beta \tilde{r}_i^X(A)+(1-\beta)\tilde{c}_i^Y(A)]^\alpha[\beta \tilde{c}_j^Y(A) + (1-\beta)\tilde{r}_j^X(A)]^{1-\alpha}$.
\end{itemize}
\end{theorem}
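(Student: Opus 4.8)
The plan is to deduce both directions from Theorem 4.1 and Lemma 2.2, by noticing that every one of the twenty-two conditions listed here is a specialization of one of the nineteen conditions of Theorem 4.1, obtained by taking the $\mathcal{G}_n$-functions $g,h$ among the weighted deleted sums $\tilde{r}^X$ and $\tilde{c}^Y$.

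For the necessity I would argue as follows. If $A\in GDD$, then by Definition 2.2 there is an $X\in D^P_n$ with $|a_{i,i}|>r_i^X(A)$ for every $i$, and since $r_i^X(A)\ge\tilde{r}_i^X(A)$ this already yields condition $(1)$; similarly, from $A\in GDD\Leftrightarrow A^T\in GDD$ one obtains a $Y\in D^P_n$ with $|a_{i,i}|>c_i^Y(A)\ge\tilde{c}_i^Y(A)$, which is condition $(2)$. Either one by itself suffices.

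For the sufficiency I would first invoke Lemma 2.2 to note that $\tilde{r}^X,\tilde{c}^Y\in\mathcal{G}_n$ for all $X,Y\in D^P_n$, and then run down the list matching each condition to its counterpart in Theorem 4.1: conditions $(1)$ and $(2)$ are Theorem 4.1$(1)$ with $g=\tilde{r}^X$, respectively $g=\tilde{c}^Y$; $(3)$ and $(4)$ are Theorem 4.1$(2)$ and $(3)$ with $(g,h)=(\tilde{r}^X,\tilde{c}^Y)$; $(5)$ and $(6)$ are Theorem 4.1$(4)$ with $g=\tilde{r}^X$, respectively $g=\tilde{c}^Y$; $(7)$ is Theorem 4.1$(5)$; $(8)$--$(13)$ are Theorem 4.1$(6)$--$(11)$; $(14)$ and $(15)$ are Theorem 4.1$(12)$ with $g=\tilde{r}^X$, respectively $g=\tilde{c}^Y$; $(16)$ is Theorem 4.1$(13)$; and $(17)$--$(22)$ are Theorem 4.1$(14)$--$(19)$, in every case with the functions involved lying in $\{\tilde{r}^X,\tilde{c}^Y\}\subseteq\mathcal{G}_n$. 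Hence whichever of $(1)$--$(22)$ holds, the matching condition of Theorem 4.1 holds for these $g,h$, and that theorem gives $A\in GDD$. An equivalent route is to observe that each condition is exactly an inequality of the form $(3.1)$ for one of the functions $F_{\mu,\alpha}$ or $F_{\nu,\alpha,\beta}$ evaluated at a pair $(g,h)$ with $g,h\in\{\tilde{r}^X,\tilde{c}^Y\}$, that such a pair lies in $\mathcal{G}^F_{\mu,\alpha}\cap\mathcal{G}^F_{\nu,\alpha,\beta}$ by Theorem 3.4 together with Lemma 2.2, and then to apply Theorem 3.1.

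I expect no genuine analytic obstacle: all the real content---the passage from nonsingularity to membership in $GDD$, and the convexity and arithmetic--geometric inequalities---is already packaged in Theorem 3.1, Theorem 4.1 and Lemmas 2.5--2.7. The one point that will need care is making the correspondence with Theorem 4.1 exact, in particular identifying the three ``symmetric'' specializations with $g=h$ that account for the increase from nineteen to twenty-two items (namely $(1)$ versus $(2)$, $(5)$ versus $(6)$, $(14)$ versus $(15)$) and checking that the parameters $\alpha,\beta$ in $(17)$--$(22)$ occupy the same positions as in Theorem 4.1$(14)$--$(19)$.
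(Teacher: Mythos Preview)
Your proposal is correct and follows essentially the same approach as the paper's own proof: for necessity the paper also produces $X,Y\in D^P_n$ with $|a_{k,k}|>r_k^X(A)\ge\tilde r_k^X(A)$ and $|a_{k,k}|>c_k^Y(A)\ge\tilde c_k^Y(A)$, and for sufficiency it likewise invokes Lemma~2.2 and then matches each of (1)--(22) to the corresponding item of Theorem~4.1 with $g,h\in\{\tilde r^X,\tilde c^Y\}$ (grouping the three ``extra'' items (2), (6), (15) under the choice $g=\tilde c^Y$ in Theorem~4.1(1),(4),(12), exactly as you do). Your item-by-item correspondence is accurate and, if anything, spelled out more explicitly than in the paper.
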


\begin{proof}
First we prove the necessity. Assume that $A \in GDD$. Then there
exist $X, Y\in D^P_n$ such that
 \begin{eqnarray*}
|a_{k,k}| > r_k^X(A), \;
|a_{k,k}| > c_k^Y(A), \; k \in {\cal N},
\end{eqnarray*}
so that
 \begin{eqnarray*}
|a_{k,k}| > \tilde{r}_k^X(A), \;
|a_{k,k}| > \tilde{c}_k^Y(A), \; k \in {\cal N},
\end{eqnarray*}
since $r^X(A) \ge \tilde{r}^X(A)$ and $c^Y(A) \ge \tilde{c}^Y(A)$.
Hence, (1)-(22) are easy to derive.

Now, using Theorem 4.1, we prove the sufficiency.

Let $g = \tilde{c}^Y$. Then from (1), (4) and (12) of Theorem 4.1
we derive (2), (6) and (15), respectively.

Let $(g,h)=(\tilde{r}^X,\tilde{c}^Y)$. Then from (1)-(19) of Theorem 4.1
we derive (1), (3), (4), (5), (7)-(14) and (16)-(22), respectively.

The proof is complete.
\end{proof}

We should point out that $X$, $Y$, $\alpha$ and $\beta$ in (1) to (22) of the theorem do not need to be the same.

In each of (1) to (22), since $X$ and $Y$ can be two different matrices, all the results given by this theorem are new and better.

Since $r^X \ge \tilde{r}^X$ and $c^Y \ge \tilde{c}^Y$, from the proof of Theorems 4.2, the following theorem is immediately.

\begin{theorem}\label{thm4-3}
Let $A = [a_{i,j}] \in \mathbb{C}^{n\times n}$. Then $A \in GDD$
if and only if there exist $X, Y\in D^P_n$ and $\alpha,\beta \in [0,1]$
such that, for all $i,j\in {\cal N}$, $i\not= j$, one of the following conditions holds:

\begin{itemize}
\item[\rm(1)] $|a_{i,i}| > r_i^X(A)$, $i\in {\cal N}$;

\item[\rm(2)] $|a_{i,i}| > c_i^Y(A)$, $i\in {\cal N}$;

\item[\rm(3)] $|a_{i,i}| > [r_i^X(A)]^{\alpha}[c_i^Y(A)]^{1-\alpha}$;

\item[\rm(4)] $|a_{i,i}| > \alpha r_i^X(A) + (1-\alpha)c_i^Y(A)$;

\item[\rm(5)] $|a_{i,i}||a_{j,j}| > r_i^X(A)r_j^X(A)$, $i,j\in {\cal N}$, $i\not= j$;

\item[\rm(6)] $|a_{i,i}||a_{j,j}| > c_i^Y(A)c_j^Y(A)$, $i,j\in {\cal N}$, $i\not= j$;

\item[\rm(7)] $|a_{i,i}||a_{j,j}| > r_i^X(A)c_j^Y(A)$, $i,j\in {\cal N}$, $i\not= j$;

\item[\rm(8)] $|a_{i,i}||a_{j,j}| >
[r_i^X(A)r_j^X(A)]^{\alpha}[c_i^Y(A)c_j^Y(A)]^{1-\alpha}$;

\item[\rm(9)] $|a_{i,i}||a_{j,j}| >
[r_i^X(A)c_j^Y(A)]^{\alpha}[r_j^X(A)c_i^Y(A)]^{1-\alpha}$;

\item[\rm(10)] $|a_{i,i}||a_{j,j}| >
\alpha r_i^X(A)r_j^X(A) + (1-\alpha)c_i^Y(A)c_j^Y(A)$;

\item[\rm(11)] $|a_{i,i}||a_{j,j}| >
\alpha r_i^X(A)c_j^Y(A) + (1-\alpha)r_j^X(A)c_i^Y(A)$;

\item[\rm(12)] $|a_{i,i}||a_{j,j}| >
[\alpha r_i^X(A) + (1-\alpha)c_i^Y(A)][\alpha r_j^X(A) + (1-\alpha)c_j^Y(A)]$;

\item[\rm(13)] $|a_{i,i}||a_{j,j}| >
[\alpha r_i^X(A) + (1-\alpha)c_i^Y(A)][\alpha c_j^Y(A) + (1-\alpha)r_j^X(A)]$;

 \item[\rm(14)]
$|a_{i,i}|^{\alpha}|a_{j,j}|^{1-\alpha} > [r_i^X(A)]^{\alpha}[r_j^Y(A)]^{1-\alpha}$;

\item[\rm(15)] $|a_{i,i}|^{\alpha}|a_{j,j}|^{1-\alpha} > [c_i^X(A)]^{\alpha}[c_j^Y(A)]^{1-\alpha}$;

\item[\rm(16)] $|a_{i,i}|^{\alpha}|a_{j,j}|^{1-\alpha} >
[r_i^X(A)]^{\alpha}[c_j^Y(A)]^{1-\alpha}$;

\item[\rm(17)]
$|a_{i,i}|^{\alpha}|a_{j,j}|^{1-\alpha} > ([r_i^X(A)]^{\beta} [c_i^Y(A)]^{1-\beta})^\alpha ([r_j^X(A)]^{\beta}[c_j^Y(A)]^{1-\beta})^{1-\alpha}$;

\item[\rm(18)]
$|a_{i,i}|^{\alpha}|a_{j,j}|^{1-\alpha} > ([r_i^X(A)]^{\beta}[c_i^Y(A)]^{1-\beta})^\alpha ([c_j^Y(A)]^{\beta}[r_j^X(A)]^{1-\beta})^{1-\alpha}$;

\item[\rm(19)]
$|a_{i,i}|^{\alpha}|a_{j,j}|^{1-\alpha} > \beta [r_i^X(A)]^{\alpha}[r_j^X(A)]^{1-\alpha} + (1-\beta)[c_i^Y(A)]^{\alpha} [c_j^Y(A)]^{1-\alpha}$;

\item[\rm(20)]
$|a_{i,i}|^{\alpha}|a_{j,j}|^{1-\alpha} > \beta [r_i^X(A)]^{\alpha} [c_j^Y(A)]^{1-\alpha} + (1-\beta)[c_i^Y(A)]^{\alpha} [r_j^X(A)]^{1-\alpha}$;

\item[\rm(21)]
$|a_{i,i}|^{\alpha}|a_{j,j}|^{1-\alpha} > [\beta r_i^X(A)+(1-\beta)c_i^Y(A)]^\alpha[\beta r_j^X(A) + (1-\beta)c_j^Y(A)]^{1-\alpha}$;

\item[\rm(22)]
$|a_{i,i}|^{\alpha}|a_{j,j}|^{1-\alpha} > [\beta r_i^X(A)+(1-\beta)c_i^Y(A)]^\alpha[\beta c_j^Y(A) + (1-\beta)r_j^X(A)]^{1-\alpha}$.
\end{itemize}
\end{theorem}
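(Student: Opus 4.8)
The plan is to deduce Theorem 4.3 from Theorem 4.2 (and, where it is tidier, straight from Theorem 4.1), with no new nonsingularity argument required. The whole mechanism is that $r^X(A)\ge\tilde{r}^X(A)$ and $c^Y(A)\ge\tilde{c}^Y(A)$ componentwise for every $X,Y\in D^P_n$, that $r^X,c^Y\in\mathcal{G}_n$ by Lemma 2.2, and that the right-hand sides of (1)--(22) are monotonic in these vectors; so the proof is a monotonicity-plus-bookkeeping argument laid directly on top of the two preceding theorems.

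For the necessity I would repeat the openings of the proofs of Theorems 4.1 and 4.2 verbatim. Since $A\in GDD$ and $GDD$ is closed under transposition, there exist $X,Y\in D^P_n$ with $|a_{k,k}|>r_k^X(A)$ and $|a_{k,k}|>c_k^Y(A)$ for all $k\in{\cal N}$. Every one of (1)--(22) is then produced from these two families of strict inequalities between positive reals by raising to powers in $[0,1]$, by multiplying the $i$-th and $j$-th inequalities, and by taking convex combinations, all of which preserve strict inequality. For instance (3) is $|a_{i,i}|=|a_{i,i}|^{\alpha}|a_{i,i}|^{1-\alpha}>[r_i^X(A)]^{\alpha}[c_i^Y(A)]^{1-\alpha}$, (4) is $|a_{i,i}|=\alpha|a_{i,i}|+(1-\alpha)|a_{i,i}|>\alpha r_i^X(A)+(1-\alpha)c_i^Y(A)$, and (5)--(22) come by multiplying and recombining the base inequalities in the prescribed pattern.

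For the sufficiency I would observe that in each of (1)--(22) the right-hand side is a monotonic function, in the sense of Definition 3.1, of the vectors $r^X(A)$ and $c^Y(A)$, since it is assembled only from products, $\alpha$- and $\beta$-weighted geometric means, and convex combinations of their entries. Because $r^X(A)\ge\tilde{r}^X(A)$ and $c^Y(A)\ge\tilde{c}^Y(A)$, replacing $r^X$ by $\tilde{r}^X$ and $c^Y$ by $\tilde{c}^Y$ throughout only weakly shrinks each right-hand side; hence condition (k) of Theorem 4.3 implies the correspondingly-numbered condition of Theorem 4.2 with the same $X,Y,\alpha,\beta$, and Theorem 4.2 yields $A\in GDD$. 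Equivalently, one can bypass Theorem 4.2 and feed the hypothesis into Theorem 4.1, choosing $g,h$ among $\{r^X,c^Y\}\subseteq\mathcal{G}_n$ (Lemma 2.2): items (1) and (2) of Theorem 4.3 are instances of item (1) of Theorem 4.1, items (3) and (4) are items (2) and (3), items (5) and (6) are instances of (4), items (7)--(13) match items (5)--(11), and items (16)--(22) match items (13)--(19). The two items not literally of this shape as printed are (14) and (15), whose two slots carry distinct scaling matrices ($r_j^Y$ in (14), $c_i^X$ in (15), legitimate since $X$ and $Y$ may differ); for these I would instead quote $\mathcal{G}_n\times\mathcal{G}_n\subseteq\mathcal{G}^F_{1,\alpha}$ from Theorem 3.4 with $(g,h)=(r^X,r^Y)$, respectively $(c^X,c^Y)$ (all four functions in $\mathcal{G}_n$ by Lemma 2.2), and conclude via Theorem 3.1.

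I do not expect any analytic difficulty; the main obstacle is the case-by-case matching --- tracking, for each of the twenty-two numbered inequalities, which of the thirteen induced $G$-function pairs $\mathcal{G}^F_{\mu,\alpha}$, $\mathcal{G}^F_{\nu,\alpha,\beta}$ is in play, which coordinate of the pair $(r^X,c^Y)$ sits in which slot, and the role-swapped variants --- and this is dispatched exactly as in the proof of Theorem 4.2.
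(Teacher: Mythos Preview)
Your proposal is correct and follows essentially the same route as the paper: the paper's proof is the single remark that since $r^X\ge\tilde r^X$ and $c^Y\ge\tilde c^Y$, Theorem~4.3 follows immediately from the proof of Theorem~4.2, and you have spelled out exactly this monotonicity-plus-bookkeeping reduction. Your explicit handling of items (14) and (15) via Theorem~4.1 with $(g,h)=(r^X,r^Y)$ or $(c^X,c^Y)$ is a welcome clarification of a point the paper glosses over, but it does not constitute a different approach.
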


Clearly, we can take $(g,h)$ in Theorem 4.1 as some other combinations, just
$g,h \in \{r^X,r^Y,c^X,c^Y,\tilde{r}^X,\tilde{r}^Y,\tilde{c}^X,\tilde{c}^Y\}$
for any $X,Y\in D^P_n$,
then we can export some results those are exactly like Theorems 4.2 and 4.3.

By using Theorems 4.1, 4.2 and 4.3 we can construct the
corresponding equivalent conditions of $M$-matrices.

Now, we give some sufficient conditions for generalized diagonally dominant matrices.

\begin{theorem}\label{thm4-4}
Let $A = [a_{i,j}] \in \mathbb{C}^{n\times n}$.
Then $A \in GDD$, provided there exist $g, h\in \mathcal{G}_n$ and $\alpha,\beta \in [0,1]$
such that, for all $i,j\in {\cal N}$, $i\not= j$, one of the following conditions holds:

\begin{itemize}
\item[\rm(1)]
$|a_{i,i}||a_{j,j}| >
 [\alpha g_i(A)+(1-\alpha)h_j(A)][\alpha g_j(A) + (1-\alpha)h_i(A)]$;

\item[\rm(2)]
$|a_{i,i}||a_{j,j}| >
 [\alpha g_i(A)+(1-\alpha)g_j(A)][\alpha h_j(A) + (1-\alpha)h_i(A)]$;

\item[\rm(3)]
$|a_{i,i}|^{\alpha}|a_{j,j}|^{1-\alpha} >
 \alpha g_i(A)+(1-\alpha)g_j(A)$;

\item[\rm(4)]
$|a_{i,i}|^{\alpha}|a_{j,j}|^{1-\alpha} >
\alpha g_i(A)+(1-\alpha)h_j(A)$;

 \item[\rm(5)]
$|a_{i,i}|^\alpha|a_{j,j}|^{1-\alpha} >
 \alpha g_i^{\beta}(A)h_i^{1-\beta}(A) + (1-\alpha)g_j^{\beta}(A)h_j^{1-\beta}(A)$;

\item[\rm(6)]
$|a_{i,i}|^\alpha|a_{j,j}|^{1-\alpha} >
 \alpha g_i^{\beta}(A)h_i^{1-\beta}(A) + (1-\alpha)h_j^{\beta}(A)g_j^{1-\beta}(A)$;

\item[\rm(7)]
$|a_{i,i}|^\alpha|a_{j,j}|^{1-\alpha} >
[\alpha g_i(A)+(1-\alpha)h_j(A)]^{\beta}[\alpha h_i(A) + (1-\alpha)g_j(A)]^{1-\beta}$;

\item[\rm(8)]
$|a_{i,i}|^\alpha|a_{j,j}|^{1-\alpha} >
[\alpha g_i(A)+(1-\alpha)g_j(A)]^{\beta}[\alpha h_i(A) + (1-\alpha)h_j(A)]^{1-\beta}$.
\end{itemize}
\end{theorem}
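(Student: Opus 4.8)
The plan is to reduce each of the eight sufficient conditions (1)--(8) to one of the necessary-and-sufficient criteria already proved in Theorem 4.1, by constructing, from the given $g,h\in\mathcal{G}_n$ and the parameters $\alpha,\beta$, a suitable pair of $G$-functions together with an exponent parameter so that the hypothesis at hand implies one of the $19$ conditions of Theorem 4.1. Concretely, for (1) I would set $\hat f=\alpha g+(1-\alpha)h$ (which lies in $\mathcal{G}_n$ by Lemma 2.7) and observe that the right-hand side is $\hat f_i(A)\hat f_j(A)$ after one notices that $[\alpha g_i+(1-\alpha)h_j][\alpha g_j+(1-\alpha)h_i]$ is not literally $\hat f_i\hat f_j$ --- the mixed indexing is the crux. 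So the real work is to bound the RHS from above by something of the form handled in Theorem 4.1. Expanding, $[\alpha g_i+(1-\alpha)h_j][\alpha g_j+(1-\alpha)h_i] = \alpha^2 g_ig_j+(1-\alpha)^2h_ih_j+\alpha(1-\alpha)(g_ih_i+g_jh_j)$, and by AM--GM (Lemma 2.5) $g_ih_i+g_jh_j$ does not obviously collapse; instead I would recognize the product as $\le$ the square of $\tfrac12$ of the sum, i.e. use $xy\le\bigl(\tfrac{x+y}{2}\bigr)^2$ with $x=\alpha g_i+(1-\alpha)h_j$, $y=\alpha g_j+(1-\alpha)h_i$, giving RHS $\le\bigl(\tfrac{\alpha(g_i+g_j)+(1-\alpha)(h_i+h_j)}{2}\bigr)^2$; but this breaks the "depends only on moduli, componentwise" structure. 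The cleaner route, and the one I expect the author intends, is: apply condition (9) of Theorem 4.1 with the pair $(g,h)$ replaced by an appropriate pair. Indeed $[\alpha g_i+(1-\alpha)h_j][\alpha g_j+(1-\alpha)h_i]$ is exactly the $i,j$ entry of $F_{7,\alpha}$ evaluated at $(g(A),h(A))$ in the notation of Definition 3.3 after swapping roles, so by Corollary 3.5 / Theorem 3.4 the pair $(g,h)\in\mathcal{G}^F_{7,\alpha}$ and we conclude $A\in GDD$ directly from Theorem 3.1.

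More efficiently, I would organize the whole proof around Definition 3.3 and Theorem 3.4: each of (1)--(8) is, up to renaming $g\leftrightarrow h$ or a harmless symmetrization over the index pair $(i,j)$, the defining inequality $F_{\mu,\alpha}(|D(A)|e,|D(A)|e)>F_{\mu,\alpha}(g(A),h(A))$ or $F_{\nu,\alpha,\beta}(|D(A)|e,|D(A)|e)>F_{\nu,\alpha,\beta}(g(A),h(A))$ for one of the thirteen monotonic functions, possibly after first replacing $g$ by a derived $G$-function such as $g^\beta h^{1-\beta}$, $\beta g+(1-\beta)h$, or $\alpha g+(1-\alpha)h$. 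For instance (2) matches the pattern of $F_{7,\alpha}$ (with $x=g$, $y=g$ in one slot after folding $\alpha g_j$ and $(1-\alpha)g_i$), (3) and (4) match $F_{10,\alpha,\beta}$ or $F_{11,\alpha,\beta}$-type expressions with $\beta=1$, (5) and (6) are $\alpha$-weighted sums of the $G$-functions $g^\beta h^{1-\beta}$ and its conjugate placed into $F_{10}$/$F_{11}$, and (7), (8) are $\beta$-convolutions of the $G$-functions $\alpha g+(1-\alpha)h$ (and its index-swapped partner) fed into $F_{12}$/$F_{13}$. In each case Lemmas 2.6 and 2.7 guarantee the auxiliary functions lie in $\mathcal{G}_n$, Theorem 3.4 gives $\mathcal{G}_n\times\mathcal{G}_n\subseteq\mathcal{G}^F_{\mu,\alpha},\mathcal{G}^F_{\nu,\alpha,\beta}$, and Theorem 3.1 then yields $A\in GDD$.

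The main obstacle is the asymmetric indexing in (1), (2), (6), (7), (8): the left index on one factor carries $g$ while the right index carries $h$, so the expression is not of the form $\varphi_i\varphi_j$ for a single $\varphi$, and one must verify it genuinely coincides with one of $F_{5,\alpha},F_{7,\alpha},F_{9,\alpha,\beta},F_{11,\alpha,\beta},F_{13,\alpha,\beta}$ from Definition 3.3 rather than merely being dominated by such an expression. I would handle this by writing out, for each item, the $(i,j)$ entry of the candidate $F$ and checking it equals the stated right-hand side verbatim (up to the symmetry $i\leftrightarrow j$ that all these $F$'s already build in by ranging over all ordered pairs with $i\ne j$); once that bookkeeping is done the conclusion is immediate. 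A secondary (entirely routine) point is that when $\beta=0$ or $\beta=1$ some of the convolutions degenerate, but the corresponding conclusions are then special cases already covered by Theorem 4.1, so no separate argument is needed. I would present the proof as: "For (k), let $\varphi=\dots\in\mathcal{G}_n$ (citing Lemma 2.6 or 2.7); then condition (k) is precisely $F_{\cdot,\cdot}(|D(A)|e,|D(A)|e)>F_{\cdot,\cdot}(\dots)$, and since $(\dots)\in\mathcal{G}^F_{\cdot,\cdot}$ by Theorem 3.4, Theorem 3.1 gives $A\in GDD$," iterating over $k=1,\dots,8$.
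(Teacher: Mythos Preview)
Your approach has a genuine gap. The core of your plan is to identify each of conditions (1)--(8) as literally the inequality $F_{\mu,\alpha}(|D(A)|e,|D(A)|e)>F_{\mu,\alpha}(g(A),h(A))$ (or its $\nu,\alpha,\beta$ analogue) for one of the thirteen monotone functions of Definition~3.3, possibly after replacing $g,h$ by derived $G$-functions. But this identification fails for exactly the cases you flag as ``the main obstacle.'' Take condition~(1): the right-hand side is $[\alpha g_i+(1-\alpha)h_j][\alpha g_j+(1-\alpha)h_i]$, in which each factor mixes the two indices $i$ and $j$. No entry of any $F_\mu$ or $F_\nu$ has this structure: every factor appearing in Definition~3.3 is of the form $\varphi(x_i,y_i)$ or $\psi(x_j,y_j)$, never $\varphi(x_i,y_j)$. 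Hence there is no choice of $G$-functions $\varphi,\psi$ (built from $g,h$ via Lemmas~2.6--2.7 or otherwise) for which $[\alpha g_i+(1-\alpha)h_j][\alpha g_j+(1-\alpha)h_i]$ equals an entry of $F_\mu(\varphi(A),\psi(A))$. The same obstruction applies to (2), (7), (8); and conditions (3)--(6), whose right-hand sides are sums like $\alpha g_i+(1-\alpha)g_j$ or $\alpha g_i^\beta h_i^{1-\beta}+(1-\alpha)g_j^\beta h_j^{1-\beta}$, likewise match no $F_\mu$ or $F_\nu$ (for instance $F_{10,\alpha,1}$ gives $x_i^\alpha x_j^{1-\alpha}$, a product, not the sum $\alpha x_i+(1-\alpha)x_j$). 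The ``bookkeeping'' you defer would reveal this, not resolve it.

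What the paper actually does is precisely the AM--GM route you considered and then set aside. For each item one applies Lemma~2.5 to each factor (or to the whole expression) to bound the right-hand side from below by a pure product of powers, which then matches one of the conditions of Theorem~4.1 verbatim. Thus (1) gives $[\alpha g_i+(1-\alpha)h_j][\alpha g_j+(1-\alpha)h_i]\ge g_i^\alpha h_j^{1-\alpha}\,g_j^\alpha h_i^{1-\alpha}=[g_ig_j]^\alpha[h_ih_j]^{1-\alpha}$, which is condition~(6) of Theorem~4.1; (2) similarly yields condition~(7); (3) yields~(12); (4) yields~(13); (5) yields~(14); (6) and (7) yield~(15); (8) yields~(14). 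The essential missing ingredient in your proposal is this systematic use of Lemma~2.5 to pass from the mixed-index sums to the index-separated products that Theorem~4.1 handles.
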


\begin{proof}
If (1) holds, then it follows by Lemma 2.5 that
\begin{eqnarray*}
|a_{i,i}||a_{j,j}| & > &
g_i^{\alpha}(A)h_j^{1-\alpha}(A)g_j^{\alpha}(A)h_i^{1-\alpha}(A)\\
& = & [g_i(A)g_j(A)]^{\alpha}[h_i(A)h_j(A)]^{1-\alpha},
\;\; i,j\in {\cal N}, i\not= j,
\end{eqnarray*}
i.e., the condition (6) in Theorem 4.1 holds, so that $A \in GDD$.

Similarly, when (2) holds it can be proved that the condition (7)
in Theorem 4.1 holds, so that $A \in GDD$.

When (3) holds, then
\begin{eqnarray*}
|a_{i,i}|^{\alpha}|a_{j,j}|^{1-\alpha} >
g_i^{\alpha}(A)g_j^{1-\alpha}(A), \;
 i,j\in {\cal N}, i\not= j,
\end{eqnarray*}
which implies that (12) of Theorem 4.1 holds,
so that $A \in GDD$.

Similarly, when (4) holds, we can prove (13) of Theorem 4.1 to be valid,
so that $A \in GDD$.

If (5) holds, then
\begin{eqnarray*}
|a_{i,i}|^\alpha|a_{j,j}|^{1-\alpha} >
 [g_i^{\beta}(A)h_i^{1-\beta}(A)]^\alpha [g_j^{\beta}(A)h_j^{1-\beta}(A)]^{1-\alpha},
  i,j\in {\cal N}, i\not= j,
\end{eqnarray*}
i.e., (14) of Theorem 4.1 holds,
so that $A \in GDD$.

Similarly, when (6) holds, we get (15) of Theorem 4.1.
Hence $A \in GDD$.

If (7) holds, then
\begin{eqnarray*}
|a_{i,i}|^\alpha|a_{j,j}|^{1-\alpha} & > &
 [g_i^\alpha h_j^{1-\alpha}(A)]^{\beta}[h_i^\alpha(A)g_j^{1-\alpha}(A)]^{1-\beta}\\
& = & [g_i^{\beta}(A)h_i^{1-\beta}(A)]^\alpha [h_j^{\beta}(A)g_j^{1-\beta}(A)]^{1-\alpha},
 i,j\in {\cal N}, i\not= j,
\end{eqnarray*}
i.e., (15) of Theorem 4.1 holds,
so that $A \in GDD$.

Similarly, when (8) holds, it can be derived (14) of Theorem 4.1,
so that $A \in GDD$.
\end{proof}

Set $(g,h) = (\tilde{r}^X, \tilde{c}^Y)$ or $(g, h) = (r^X, c^Y)$ for $X, Y\in D^P_n$. Then the following
results are direct.

\begin{theorem}\label{thm4-5}
Let $A = [a_{i,j}] \in \mathbb{C}^{n\times n}$.
Then $A \in GDD$, provided there exist $X, Y\in D^P_n$ and $\alpha,\beta \in [0,1]$
such that, for all $i,j\in {\cal N}$, $i\not= j$, one of the following conditions holds:

\begin{itemize}
\item[\rm(1)]
$|a_{i,i}||a_{j,j}| >
 [\alpha \tilde{r}_i^X(A)+(1-\alpha)\tilde{c}_j^Y(A)][\alpha \tilde{r}_j^X(A) + (1-\alpha)\tilde{c}_i^Y(A)]$;

\item[\rm(2)]
$|a_{i,i}||a_{j,j}| >
 [\alpha \tilde{r}_i^X(A)+(1-\alpha)\tilde{r}_j^X(A)][\alpha \tilde{c}_j^Y(A) + (1-\alpha)\tilde{c}_i^Y(A)]$;

\item[\rm(3)]
$|a_{i,i}|^{\alpha}|a_{j,j}|^{1-\alpha} >
\alpha \tilde{r}_i^X(A)+(1-\alpha)\tilde{r}_j^X(A)$;

\item[\rm(4)]
$|a_{i,i}|^\alpha|a_{j,j}|^{1-\alpha} >
\alpha \tilde{c}_i^Y(A)+(1-\alpha)\tilde{c}_j^Y(A)$;

\item[\rm(5)]
$|a_{i,i}|^{\alpha}|a_{j,j}|^{1-\alpha} >
\alpha \tilde{r}_i^X(A)+(1-\alpha)\tilde{c}_j^Y(A)$;

\item[\rm(6)]
$|a_{i,i}|^\alpha|a_{j,j}|^{1-\alpha} >
 \alpha [\tilde{r}_i^X(A)]^{\beta}[\tilde{c}_i^Y(A)]^{1-\beta} + (1-\alpha)[\tilde{r}_j^X(A)]^{\beta}[\tilde{c}_j^Y(A)]^{1-\beta}$;

\item[\rm(7)]
$|a_{i,i}|^\alpha|a_{j,j}|^{1-\alpha} >
 \alpha [\tilde{r}_i^X(A)]^{\beta}[\tilde{c}_i^Y(A)]^{1-\beta} + (1-\alpha)[\tilde{c}_j^Y(A)]^{\beta}[\tilde{r}_j^X(A)]^{1-\beta}$;

\item[\rm(8)]
$|a_{i,i}|^\alpha|a_{j,j}|^{1-\alpha} >
[\alpha \tilde{r}_i^X(A)+(1-\alpha)\tilde{c}_j^Y(A)]^{\beta}[\alpha \tilde{c}_i^Y(A) + (1-\alpha)\tilde{r}_j^X(A)]^{1-\beta}$;

\item[\rm(9)]
$|a_{i,i}|^\alpha|a_{j,j}|^{1-\alpha} >
[\alpha \tilde{r}_i^X(A)+(1-\alpha)\tilde{r}_j^X(A)]^{\beta}[\alpha \tilde{c}_i^Y(A) + (1-\alpha)\tilde{c}_j^Y(A)]^{1-\beta}$
\end{itemize}

and

\begin{itemize}
\item[\rm(1')]
$|a_{i,i}||a_{j,j}| >
 [\alpha r_i^X(A)+(1-\alpha)c_j^Y(A)][\alpha r_j^X(A) + (1-\alpha)c_i^Y(A)]$;

\item[\rm(2')]
$|a_{i,i}||a_{j,j}| >
 [\alpha r_i^X(A)+(1-\alpha)r_j^X(A)][\alpha c_j^Y(A) + (1-\alpha)c_i^Y(A)]$;

\item[\rm(3')]
$|a_{i,i}|^{\alpha}|a_{j,j}|^{1-\alpha} >
\alpha r_i^X(A)+(1-\alpha)r_j^X(A)$;

\item[\rm(4')]
$|a_{i,i}|^{\alpha}|a_{j,j}|^{1-\alpha} >
\alpha c_i^Y(A)+(1-\alpha)c_j^Y(A)$;

\item[\rm(5')]
$|a_{i,i}|^{\alpha}|a_{j,j}|^{1-\alpha} >
\alpha r_i^X(A)+(1-\alpha)c_j^Y(A)$;

\item[\rm(6')]
$|a_{i,i}|^\alpha|a_{j,j}|^{1-\alpha} >
 \alpha [r_i^X(A)]^{\beta}[c_i^Y(A)]^{1-\beta} + (1-\alpha)[r_j^X(A)]^{\beta}[c_j^Y(A)]^{1-\beta}$;

\item[\rm(7')]
$|a_{i,i}|^\alpha|a_{j,j}|^{1-\alpha} >
 \alpha [r_i^X(A)]^{\beta}[c_i^Y(A)]^{1-\beta} + (1-\alpha)[c_j^Y(A)]^{\beta}[r_j^X(A)]^{1-\beta}$;

\item[\rm(8')]
$|a_{i,i}|^\alpha|a_{j,j}|^{1-\alpha} >
[\alpha r_i^X(A)+(1-\alpha)c_j^Y(A)]^{\beta}[\alpha c_i^Y(A) + (1-\alpha)r_j^X(A)]^{1-\beta}$;

\item[\rm(9')]
$|a_{i,i}|^\alpha|a_{j,j}|^{1-\alpha} >
[\alpha r_i^X(A)+(1-\alpha)r_j^X(A)]^{\beta}[\alpha c_i^Y(A) + (1-\alpha)c_j^Y(A)]^{1-\beta}$.
\end{itemize}
\end{theorem}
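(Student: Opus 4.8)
The plan is to reduce Theorem 4.5 directly to Theorem 4.4 by an appropriate choice of the $G$-function pair, exactly as the sentence preceding the statement suggests. Since Lemma 2.2 tells us that $\tilde{r}^X, \tilde{c}^Y \in \mathcal{G}_n$ for all $X, Y \in D^P_n$, I would first set $(g,h) = (\tilde{r}^X, \tilde{c}^Y)$ in Theorem 4.4. Then conditions (1)--(8) of Theorem 4.4 literally become conditions (1)--(8) of Theorem 4.5 after substituting $g_k(A) = \tilde{r}_k^X(A)$ and $h_k(A) = \tilde{c}_k^Y(A)$, so each of those cases immediately yields $A \in GDD$.

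Next I would observe that Theorem 4.5 has nine primed conditions (1')--(9') and nine unprimed ones, whereas Theorem 4.4 has only eight conditions; the extra condition in each block (namely (4) and (9), or (4') and (9')) must be handled separately. For (4), i.e. $|a_{i,i}|^\alpha|a_{j,j}|^{1-\alpha} > \alpha \tilde{c}_i^Y(A) + (1-\alpha)\tilde{c}_j^Y(A)$, one should instead take $(g,h) = (\tilde{c}^Y, \tilde{c}^Y)$ in condition (3) of Theorem 4.4 (or equivalently apply Theorem 4.1(12) with $g = \tilde{c}^Y$), which gives the claim. For (9), $|a_{i,i}|^\alpha|a_{j,j}|^{1-\alpha} > [\alpha \tilde{r}_i^X + (1-\alpha)\tilde{r}_j^X]^\beta [\alpha \tilde{c}_i^Y + (1-\alpha)\tilde{c}_j^Y]^{1-\beta}$, I would apply Lemma 2.5 twice, or rather first bound the product of the two bracketed quantities below by $[\alpha \tilde{r}_i^X + (1-\alpha)\tilde{r}_j^X]$ raised to $\beta$ times $[\alpha \tilde{c}_i^Y + (1-\alpha)\tilde{c}_j^Y]$ raised to $1-\beta$ and recognize this as condition (8) of Theorem 4.4 with the roles of the averaging indices appropriately matched — alternatively, condition (9) follows from (8) of Theorem 4.4 by taking $g = \tilde{r}^X$, $h = \tilde{c}^Y$ and noting the bracket structure coincides. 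In fact, re-reading (8) of Theorem 4.4, the bracket $[\alpha g_i + (1-\alpha)g_j]^\beta [\alpha h_i + (1-\alpha)h_j]^{1-\beta}$ is exactly (9) after substitution, so (9) is really just the eighth case; the genuinely extra case is (4) (and its primed analogue).

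For the primed block, I would simply repeat the argument with $(g,h) = (r^X, c^Y)$ instead of $(\tilde{r}^X, \tilde{c}^Y)$, again legitimate since $r^X, c^Y \in \mathcal{G}_n$ by Lemma 2.2; conditions (1')--(3') and (5')--(9') reduce to Theorem 4.4(1)--(3),(5)--(8), and (4') is handled via Theorem 4.1(12) with $g = c^Y$. Throughout, the monotonicity inequalities $r^X \ge \tilde{r}^X$ and $c^Y \ge \tilde{c}^Y$ are not even needed here since each primed condition is an independent hypothesis rather than a consequence of an unprimed one; they were only relevant in passing from Theorem 4.1 to Theorem 4.2/4.3.

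The only real obstacle I anticipate is bookkeeping: making sure the index pattern in each of the eighteen conditions of Theorem 4.5 matches the correct one of the eight conditions of Theorem 4.4 (the asymmetry between $i$ and $j$ in conditions like (1), (2), (7), (8) versus (5), (6) must be tracked carefully), and correctly identifying which conditions are "extra" and need the detour through Theorem 4.1(12). No hard analysis is involved — every step is an application of Lemma 2.5, Lemma 2.2, or a direct substitution into an already-proved theorem — so the proof will be short once the correspondence table is written out.
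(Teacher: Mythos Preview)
Your proposal is correct and follows exactly the route the paper takes: the paper's entire proof is the one sentence ``Set $(g,h) = (\tilde{r}^X, \tilde{c}^Y)$ or $(g, h) = (r^X, c^Y)$ for $X, Y\in D^P_n$. Then the following results are direct.'' Your only stumble is the initial claim that conditions (1)--(8) of Theorem~4.4 become conditions (1)--(8) of Theorem~4.5; as you then correctly realize, the actual correspondence is $4.4(1{-}3) \to 4.5(1{-}3)$, $4.4(4{-}8) \to 4.5(5{-}9)$, with the single extra case $4.5(4)$ (and its primed analogue) handled by applying Theorem~4.4(3) with $g = \tilde{c}^Y$ (respectively $g = c^Y$) --- a point on which you are in fact more explicit than the paper itself.
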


In Theorems 4.2, 4.3 and 4.5, if $X = Y = I$, then
we can directly derive the following sufficient conditions for generalized diagonally dominant matrices.

\begin{theorem}\label{thm4-6}
Let $A = [a_{i,j}] \in \mathbb{C}^{n\times n}$.
Then $A \in GDD$, provided there exist $\alpha,\beta \in [0,1]$
such that, for all $i,j\in {\cal N}$, $i\not= j$, one of the following conditions holds:

\begin{itemize}
\item[\rm(1)] $|a_{i,i}| > \tilde{r}_i(A)$;

\item[\rm(2)] $|a_{i,i}| > \tilde{c}_i(A)$;

\item[\rm(3)] $|a_{i,i}| > \tilde{r}_i^{\alpha}(A)\tilde{c}_i^{1-\alpha}(A)$;

\item[\rm(4)] $|a_{i,i}| > \alpha \tilde{r}_i(A) + (1-\alpha)\tilde{c}_i(A)$;

\item[\rm(5)] $|a_{i,i}||a_{j,j}| > \tilde{r}_i(A)\tilde{r}_j(A)$;

\item[\rm(6)] $|a_{i,i}||a_{j,j}| > \tilde{c}_i(A)\tilde{c}_j(A)$;

\item[\rm(7)] $|a_{i,i}||a_{j,j}| > \tilde{r}_i(A)\tilde{c}_j(A)$;

\item[\rm(8)] $|a_{i,i}||a_{j,j}| >
[\tilde{r}_i(A)\tilde{r}_j(A)]^{\alpha}[\tilde{c}_i(A)\tilde{c}_j(A)]^{1-\alpha}$;

\item[\rm(9)] $|a_{i,i}||a_{j,j}| >
[\tilde{r}_i(A)\tilde{c}_j(A)]^{\alpha}[\tilde{r}_j(A)\tilde{c}_i(A)]^{1-\alpha}$;

\item[\rm(10)] $|a_{i,i}||a_{j,j}| >
\alpha\tilde{r}_i(A)\tilde{r}_j(A) + (1-\alpha)\tilde{c}_i(A)\tilde{c}_j(A)$;

\item[\rm(11)] $|a_{i,i}||a_{j,j}| >
\alpha\tilde{r}_i(A)\tilde{c}_j(A) + (1-\alpha)\tilde{r}_j(A)\tilde{c}_i(A)$;

\item[\rm(12)]
$|a_{i,i}||a_{j,j}| >
 [\alpha\tilde{r}_i(A)+(1-\alpha)\tilde{c}_i(A)][\alpha\tilde{r}_j(A) + (1-\alpha)\tilde{c}_j(A)]$;

\item[\rm(13)]
$|a_{i,i}||a_{j,j}| >
 [\alpha\tilde{r}_i(A)+(1-\alpha)\tilde{c}_i(A)][\alpha\tilde{c}_j(A) + (1-\alpha)\tilde{r}_j(A)]$;

 \item[\rm(14)]
$|a_{i,i}||a_{j,j}| >
 [\alpha\tilde{r}_i(A)+(1-\alpha)\tilde{c}_j(A)][\alpha\tilde{r}_j(A) + (1-\alpha)\tilde{c}_i(A)]$;

\item[\rm(15)]
$|a_{i,i}||a_{j,j}| >
 [\alpha\tilde{r}_i(A)+(1-\alpha)\tilde{r}_j(A)][\alpha\tilde{c}_j(A) + (1-\alpha)\tilde{c}_i(A)]$;

 \item[\rm(16)]
$|a_{i,i}|^{\alpha}|a_{j,j}|^{1-\alpha} > \tilde{r}_i^{\alpha}(A)\tilde{r}_j^{1-\alpha}(A)$;

\item[\rm(17)] $|a_{i,i}|^{\alpha}|a_{j,j}|^{1-\alpha} > \tilde{c}_i^{\alpha}(A)\tilde{c}_j^{1-\alpha}(A)$;

\item[\rm(18)] $|a_{i,i}|^{\alpha}|a_{j,j}|^{1-\alpha} >
\tilde{r}_i^{\alpha}(A)\tilde{c}_j^{1-\alpha}(A)$;

\item[\rm(19)]
$|a_{i,i}|^\alpha|a_{j,j}|^{1-\alpha} >
\alpha \tilde{r}_i(A)+(1-\alpha)\tilde{r}_j(A)$;

\item[\rm(20)]
$|a_{i,i}|^{\alpha}|a_{j,j}|^{1-\alpha} >
\alpha \tilde{c}_i(A)+(1-\alpha)\tilde{c}_j(A)$;

\item[\rm(21)]
$|a_{i,i}|^{\alpha}|a_{j,j}|^{1-\alpha} >
\alpha \tilde{r}_i(A)+(1-\alpha)\tilde{c}_j(A)$;

\item[\rm(22)]
$|a_{i,i}|^{\alpha}|a_{j,j}|^{1-\alpha} > [\tilde{r}_i^{\beta}(A)\tilde{c}_i^{1-\beta}(A)]^\alpha [\tilde{r}_j^{\beta}(A)\tilde{c}_j^{1-\beta}(A)]^{1-\alpha}$;

\item[\rm(23)]
$|a_{i,i}|^{\alpha}|a_{j,j}|^{1-\alpha} > [\tilde{r}_i^{\beta}(A)\tilde{c}_i^{1-\beta}(A)]^\alpha [\tilde{c}_j^{\beta}(A)\tilde{r}_j^{1-\beta}(A)]^{1-\alpha}$;

\item[\rm(24)]
$|a_{i,i}|^{\alpha}|a_{j,j}|^{1-\alpha} > \beta\tilde{r}_i^{\alpha}(A)\tilde{r}_j^{1-\alpha}(A) + (1-\beta)\tilde{c}_i^{\alpha}(A)\tilde{c}_j^{1-\alpha}(A)$;

\item[\rm(25)]
$|a_{i,i}|^{\alpha}|a_{j,j}|^{1-\alpha} > \beta\tilde{r}_i^{\alpha}(A)\tilde{c}_j^{1-\alpha}(A) + (1-\beta)\tilde{c}_i^{\alpha}(A)\tilde{r}_j^{1-\alpha}(A)$;

\item[\rm(26)]
$|a_{i,i}|^\alpha|a_{j,j}|^{1-\alpha} >
 \alpha\tilde{r}_i^{\beta}(A)\tilde{c}_i^{1-\beta}(A) + (1-\alpha)\tilde{r}_j^{\beta}(A)\tilde{c}_j^{1-\beta}(A)$;

\item[\rm(27)]
$|a_{i,i}|^\alpha|a_{j,j}|^{1-\alpha} >
 \alpha\tilde{r}_i^{\beta}(A)\tilde{c}_i^{1-\beta}(A) + (1-\alpha)\tilde{c}_j^{\beta}(A)\tilde{r}_j^{1-\beta}(A)$;

\item[\rm(28)]
$|a_{i,i}|^{\alpha}|a_{j,j}|^{1-\alpha} > [\beta\tilde{r}_i(A)+(1-\beta)\tilde{c}_i(A)]^\alpha[\beta\tilde{r}_j(A) + (1-\beta)\tilde{c}_j(A)]^{1-\alpha}$;

\item[\rm(29)]
$|a_{i,i}|^{\alpha}|a_{j,j}|^{1-\alpha} > [\beta \tilde{r}_i(A)+(1-\beta)\tilde{c}_i(A)]^\alpha[\beta \tilde{c}_j(A) + (1-\beta)\tilde{r}_j(A)]^{1-\alpha}$;

\item[\rm(30)]
$|a_{i,i}|^\alpha|a_{j,j}|^{1-\alpha} >
[\alpha\tilde{r}_i(A)+(1-\alpha)\tilde{c}_j(A)]^{\beta}[\alpha\tilde{c}_i(A) + (1-\alpha)\tilde{r}_j(A)]^{1-\beta}$;

\item[\rm(31)]
$|a_{i,i}|^\alpha|a_{j,j}|^{1-\alpha} >
[\alpha\tilde{r}_i(A)+(1-\alpha)\tilde{r}_j(A)]^{\beta}[\alpha\tilde{c}_i(A) + (1-\alpha)\tilde{c}_j(A)]^{1-\beta}$.
\end{itemize}
\end{theorem}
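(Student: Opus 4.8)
The plan is to obtain Theorem 4.6 as the $X=Y=I$ specialization of Theorems 4.2 and 4.5, where $I$ denotes the $n\times n$ identity matrix. First I would record the trivial fact that $I\in D^P_n$ and that $I^{-1}AI=A$ for every $A\in\mathbb{C}^{n\times n}$; in particular the Frobenius normal form of $I^{-1}AI$ is that of $A$, so $r^I(A)=r(A)$, $c^I(A)=c(A)$, $\tilde r^I(A)=\tilde r(A)$ and $\tilde c^I(A)=\tilde c(A)$ for every $A$. No other preliminary is needed.

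Next I would apply Theorem 4.2 with $X=Y=I$. That theorem guarantees $A\in GDD$ whenever, for some $\alpha,\beta\in[0,1]$, one of its conditions (1)--(22) holds for all $i\neq j$; replacing $\tilde r^I$ by $\tilde r$ and $\tilde c^I$ by $\tilde c$ turns those conditions into, respectively, conditions (1)--(13), (16)--(18), (22)--(25) and (28)--(29) of the present theorem, so these cases are settled. Here I would note that Theorem 4.2 permits $\alpha$ and $\beta$ to be chosen separately for each condition, and that a condition of the present theorem not mentioning $\beta$ corresponds to one of Theorem 4.2 in which $\beta$ is inert, so $\beta$ may then be taken arbitrarily. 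I would then apply Theorem 4.5 with $X=Y=I$: its (unprimed) conditions (1)--(9) become, under the same substitution, conditions (14), (15), (19)--(21), (26), (27), (30) and (31) of the present theorem, and Theorem 4.5 yields $A\in GDD$ in each of these cases. Since each of the thirty-one listed conditions has now been matched with a condition of Theorem 4.2 or of Theorem 4.5 that forces $A\in GDD$, the proof is complete. (One could also invoke Theorem 4.3, or the primed conditions of Theorem 4.5, at $X=Y=I$; but because $r\ge\tilde r$ and $c\ge\tilde c$ these produce more restrictive hypotheses, already covered by the conditions listed above.)

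There is no real difficulty here: the statement is a direct corollary, obtained in the same spirit as the passage from Theorem 4.2 to Theorems 4.3 and 4.5. The only point that needs care is the bookkeeping --- verifying that each of the thirty-one conditions is literally the $X=Y=I$ instance of the correct condition of Theorem 4.2 or 4.5 --- together with the elementary observation that setting $X=Y=I$ replaces the weighted and reduced-weighted row and column sums by their plain reduced counterparts $\tilde r$ and $\tilde c$.
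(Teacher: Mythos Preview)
Your proposal is correct and is exactly the approach the paper takes: the paper states just before Theorem~4.6 that ``In Theorems 4.2, 4.3 and 4.5, if $X = Y = I$, then we can directly derive the following sufficient conditions,'' and your bookkeeping matching conditions (1)--(13), (16)--(18), (22)--(25), (28)--(29) with Theorem~4.2 and conditions (14), (15), (19)--(21), (26), (27), (30), (31) with the unprimed part of Theorem~4.5 is accurate. Your parenthetical remark that Theorem~4.3 (and the primed part of Theorem~4.5) is not needed here is also correct, since those yield the $r,c$ versions that appear in Theorem~4.7 rather than the $\tilde r,\tilde c$ versions of Theorem~4.6.
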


\begin{theorem}\label{thm4-7}
Let $A = [a_{i,j}] \in \mathbb{C}^{n\times n}$.
Then $A \in GDD$, provided there exist $\alpha,\beta \in [0,1]$
such that, for all $i,j\in {\cal N}$, $i\not= j$, one of the following conditions holds:

\begin{itemize}
\item[\rm(1)] $|a_{i,i}| > r_i(A)$;

\item[\rm(2)] $|a_{i,i}| > c_i(A)$;

\item[\rm(3)] $|a_{i,i}| > r_i^{\alpha}(A)c_i^{1-\alpha}(A)$;

\item[\rm(4)] $|a_{i,i}| > \alpha r_i(A) + (1-\alpha)c_i(A)$;

\item[\rm(5)] $|a_{i,i}||a_{j,j}| > r_i(A)r_j(A)$;

\item[\rm(6)] $|a_{i,i}||a_{j,j}| > c_i(A)c_j(A)$;

\item[\rm(7)] $|a_{i,i}||a_{j,j}| > r_i(A)c_j(A)$;

\item[\rm(8)] $|a_{i,i}||a_{j,j}| >
[r_i(A)r_j(A)]^{\alpha}[c_i(A)c_j(A)]^{1-\alpha}$;

\item[\rm(9)] $|a_{i,i}||a_{j,j}| >
[r_i(A)c_j(A)]^{\alpha}[r_j(A)c_i(A)]^{1-\alpha}$;

\item[\rm(10)] $|a_{i,i}||a_{j,j}| >
\alpha r_i(A)r_j(A) + (1-\alpha)c_i(A)c_j(A)$;

\item[\rm(11)] $|a_{i,i}||a_{j,j}| >
\alpha r_i(A)c_j(A) + (1-\alpha)r_j(A)c_i(A)$;

\item[\rm(12)] $|a_{i,i}||a_{j,j}| >
 [\alpha r_i(A)+(1-\alpha)c_i(A)][\alpha r_j(A) + (1-\alpha)c_j(A)]$;

\item[\rm(13)]
$|a_{i,i}||a_{j,j}| >
 [\alpha r_i(A)+(1-\alpha)c_i(A)][\alpha c_j(A) + (1-\alpha)r_j(A)]$;

 \item[\rm(14)]
$|a_{i,i}||a_{j,j}| >
 [\alpha r_i(A)+(1-\alpha)c_j(A)][\alpha r_j(A) + (1-\alpha)c_i(A)]$;

\item[\rm(15)]
$|a_{i,i}||a_{j,j}| >
 [\alpha r_i(A)+(1-\alpha)r_j(A)][\alpha c_j(A) + (1-\alpha)c_i(A)]$;

 \item[\rm(16)]
$|a_{i,i}|^{\alpha}|a_{j,j}|^{1-\alpha} > r_i^{\alpha}(A)r_j^{1-\alpha}(A)$;

\item[\rm(17)] $|a_{i,i}|^{\alpha}|a_{j,j}|^{1-\alpha} > c_i^{\alpha}(A)c_j^{1-\alpha}(A)$;

\item[\rm(18)] $|a_{i,i}|^{\alpha}|a_{j,j}|^{1-\alpha} >
r_i^{\alpha}(A)c_j^{1-\alpha}(A)$;

\item[\rm(19)]
$|a_{i,i}|^{\alpha}|a_{j,j}|^{1-\alpha} >
\alpha r_i(A)+(1-\alpha)r_j(A)$;

\item[\rm(20)]
$|a_{i,i}|^{\alpha}|a_{j,j}|^{1-\alpha} >
\alpha c_i(A)+(1-\alpha)c_j(A)$;

\item[\rm(21)]
$|a_{i,i}|^{\alpha}|a_{j,j}|^{1-\alpha} >
\alpha r_i(A)+(1-\alpha)c_j(A)$;

\item[\rm(22)]
$|a_{i,i}|^{\alpha}|a_{j,j}|^{1-\alpha} > [r_i^{\beta}(A)c_i^{1-\beta}(A)]^\alpha [r_j^{\beta}(A)c_j^{1-\beta}(A)]^{1-\alpha}$;

\item[\rm(23)]
$|a_{i,i}|^{\alpha}|a_{j,j}|^{1-\alpha} > [r_i^{\beta}(A)c_i^{1-\beta}(A)]^\alpha [c_j^{\beta}(A)r_j^{1-\beta}(A)]^{1-\alpha}$;

\item[\rm(24)]
$|a_{i,i}|^{\alpha}|a_{j,j}|^{1-\alpha} > \beta r_i^{\alpha}(A)r_j^{1-\alpha}(A) + (1-\beta)c_i^{\alpha}(A)c_j^{1-\alpha}(A)$;

\item[\rm(25)]
$|a_{i,i}|^{\alpha}|a_{j,j}|^{1-\alpha} > \beta r_i^{\alpha}(A)c_j^{1-\alpha}(A) + (1-\beta)c_i^{\alpha}(A)r_j^{1-\alpha}(A)$;

\item[\rm(26)]
$|a_{i,i}|^\alpha|a_{j,j}|^{1-\alpha} >
 \alpha r_i^{\beta}(A)c_i^{1-\beta}(A) + (1-\alpha)r_j^{\beta}(A)c_j^{1-\beta}(A)$;

\item[\rm(27)]
$|a_{i,i}|^\alpha|a_{j,j}|^{1-\alpha} >
 \alpha r_i^{\beta}(A)c_i^{1-\beta}(A) + (1-\alpha)c_j^{\beta}(A)r_j^{1-\beta}(A)$;

\item[\rm(28)]
$|a_{i,i}|^{\alpha}|a_{j,j}|^{1-\alpha} > [\beta r_i(A)+(1-\beta)c_i(A)]^\alpha[\beta r_j(A) + (1-\beta)c_j(A)]^{1-\alpha}$;

\item[\rm(29)]
$|a_{i,i}|^{\alpha}|a_{j,j}|^{1-\alpha} > [\beta r_i(A)+(1-\beta)c_i(A)]^\alpha[\beta c_j(A) + (1-\beta)r_j(A)]^{1-\alpha}$;

\item[\rm(30)]
$|a_{i,i}|^\alpha|a_{j,j}|^{1-\alpha} >
[\alpha r_i(A)+(1-\alpha)c_j(A)]^{\beta}[\alpha c_i(A) + (1-\alpha)r_j(A)]^{1-\beta}$;

\item[\rm(31)]
$|a_{i,i}|^\alpha|a_{j,j}|^{1-\alpha} >
[\alpha r_i(A)+(1-\alpha)r_j(A)]^{\beta}[\alpha c_i(A) + (1-\alpha)c_j(A)]^{1-\beta}$.
\end{itemize}
\end{theorem}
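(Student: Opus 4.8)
The plan is to obtain Theorem 4.7 as the special case $X = Y = I$ of Theorems 4.3 and 4.5. The only fact needed to make this work is that the identity matrix $I$ belongs to $D^P_n$ and satisfies $I^{-1}AI = A$, so that $r_i^{I}(A) = r_i(A)$ and $c_i^{I}(A) = c_i(A)$ for every $i \in {\cal N}$; in other words the weighted sum functions $r^X$ and $c^Y$ reduce to the ordinary deleted row and column sum functions $r$ and $c$ when $X = Y = I$. Since Theorems 4.3 and 4.5 assert that $A \in GDD$ whenever one of their listed inequalities holds for some $X, Y \in D^P_n$, it suffices to exhibit each of the thirty-one conditions of Theorem 4.7 as the $X = Y = I$ instance of one such inequality.

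Concretely, I would go through the list as follows. Substituting $X = Y = I$ into conditions (1)--(22) of Theorem 4.3 produces, respectively, conditions (1)--(13), (16), (17), (18), (22), (23), (24), (25), (28) and (29) of Theorem 4.7: the first thirteen match verbatim, conditions (14)--(16) of Theorem 4.3 become (16)--(18) of Theorem 4.7, conditions (17)--(18) become (22)--(23), conditions (19)--(20) become (24)--(25), and conditions (21)--(22) become (28)--(29). Hence each of these conditions of Theorem 4.7 forces $A \in GDD$ by Theorem 4.3 applied with $X = Y = I$.

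The nine conditions of Theorem 4.7 not yet accounted for, namely (14), (15), (19), (20), (21), (26), (27), (30) and (31), are the $X = Y = I$ instances of the primed conditions (1')--(9') of Theorem 4.5: (1'), (2') give (14), (15); (3'), (4'), (5') give (19), (20), (21); (6'), (7') give (26), (27); and (8'), (9') give (30), (31). Again Theorem 4.5 applied with $X = Y = I$ yields $A \in GDD$ under each of them, which completes the argument.

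There is no real mathematical difficulty here, since everything is a substitution into results already established; the only work is the bookkeeping of the correspondence above. The one point worth checking is that the slightly asymmetric superscripts occurring in a few conditions of Theorem 4.3 (for instance $[r_i^X(A)]^{\alpha}[r_j^Y(A)]^{1-\alpha}$ in its condition (14), or $[c_i^X(A)]^{\alpha}[c_j^Y(A)]^{1-\alpha}$ in its condition (15)) do collapse to the symmetric expressions $r_i^{\alpha}(A)r_j^{1-\alpha}(A)$ and $c_i^{\alpha}(A)c_j^{1-\alpha}(A)$ once $X = Y = I$, which is immediate. Thus the proof will simply consist of invoking Theorems 4.3 and 4.5 with $X = Y = I$ and reading off the stated inequalities.
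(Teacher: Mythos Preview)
Your proposal is correct and is exactly the approach the paper takes: the paper states just before Theorem 4.6 that ``In Theorems 4.2, 4.3 and 4.5, if $X = Y = I$, then we can directly derive the following sufficient conditions,'' and Theorem 4.7 is precisely the $X=Y=I$ specialization of Theorems 4.3 and 4.5 (primed part). Your bookkeeping of the correspondence between the condition labels is accurate.
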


Of course, we can also take $(g,h)$ in Theorems 4.1 and 4.4 as some other combinations, just
$g,h \in \{r,\tilde{r},c,\tilde{c},r^X,r^Y,c^X,c^Y,\tilde{r}^X,\tilde{r}^Y,\tilde{c}^X,\tilde{c}^Y\}$
for any $X,Y\in D^P_n$,
then we can export more sufficient conditions,
those are exactly like Theorems 4.5, 4.6 and 4.7.

The theorems above include some known results as follows.

The conditions (1) and (2) in Theorem 4.3 are consistent with definition
of generalized diagonally dominant matrices.

The condition (3) is taken as a sufficient condition for nonsingular matrix [52, Corollary 1.17].

The conditions (1)-(4), (8) and (12) in Theorem 4.7
as the sufficient conditions for $H$-matrices are given [37, p.189, 14].

When one of the conditions (3), (4) and (8)
in Theorem 4.7 holds, it is shown by [35, Satz I, p.210 and Satz II]
that $A$ is nonsingular. However, there it is no proof that $A \in GDD$.
While, in [17, (6.8), (6.9) and (6.10)] it is proved that $A$ is a nonsingular $M$-matrix whenever $A \in {\cal Z}$.
When (3) or (4) holds it is proved by [11, Theorem 13] that $A$ is an $H$-matrix.
Furthermore, (3) gives a positive answer to [15, Remark 6.5].

When the condition (5) in Theorem 4.7 holds, it is proved
that $A$ is nonsingular by [34], [5, p.22] and 46, Theorem V].
And it is shown that $A$ is an $H$-matrix by [12, Theorem 3].

The condition (7) in Theorem 4.7 is given in [43, Theorem 9].

\newpage\markboth{}{}
\section{Matrix eigenvalues inclusion regions}\label{se5}

The Ger\v{s}gorin Circle Theorem gives a matrix eigenvalues inclusion region.
In this section we propose some matrix eigenvalues inclusion regions
using the $G$-function pairs. The results include some known ones.

Denote
\begin{eqnarray*}
\Gamma_i(A):= \{z\in \mathbb{C}|\;|z-a_{i,i}| \le r_i(A)\}, \; i \in {\cal N},
\end{eqnarray*}
and
\begin{eqnarray*}
\Gamma(A):= \bigcup\limits_{i\in {\cal N}} \Gamma_i(A),
\end{eqnarray*}
where $\Gamma_i(A)$ is called the $i^{\mbox{th}}$-Ger\v{s}gorin disk
of $A$ having center $a_{i,i}$ and radius $r_i(A)$. And $\Gamma(A)$ is called the
Ger\v{s}gorin set.

The Ger\v{s}gorin Circle Theorem says that
for any $\lambda\in\sigma(A)$, there exists $i \in {\cal N}$
such that $\lambda \in \Gamma_i(A)$ and, therefore, $\sigma(A)\subseteq \Gamma(A)$.

To consider relations between the $G$-function pairs and
matrix eigenvalues inclusion regions, for $A\in \mathbb{C}^{n\times n}$
and $(g,h)\in \mathcal{G}^F$, we set
\begin{eqnarray*}
\left\{\begin{array}{rll}\Gamma_i(A,F,g,h) & := & \{z\in \mathbb{C}^n:[F(|D(zI-A)|e,|D(zI-A)|e)]_i\\
&& \le
[F(g(A),h(A))]_i\}, \; i = 1, \cdots, m,\\
\Gamma(A,F,g,h) & := & \bigcup\limits_{i=1}^m
\Gamma_i(A,F,g,h), \\
{\bf\Gamma}(A,F) & := & \bigcap\limits_{(g,h)\in\mathcal{G}^F}\Gamma(A,F,g,h).
\end{array} \right.
\end{eqnarray*}

Similar to the Ger\v{s}gorin circle theorem we prove the following eigenvalue inclusion theorem.

\begin{theorem} \label{thm5-1}
Suppose that $\mathcal{G}^F$ is a set of the $G$-function pair induced by $F$.
Then for any $A\in \mathbb{C}^{n\times n}$ and $(g,h)\in \mathcal{G}^F$, the following results hold:

\begin{itemize}
\item[\rm(a)] For any $\lambda\in\sigma(A)$, there exists $i\in\{1, \cdots, m\}$
such that   $\lambda \in \Gamma_i(A,F,g,h)$;

\item[\rm(b)] $\sigma(A)\subseteq {\bf\Gamma}(A,F)\subseteq \Gamma(A,F,g,h)$.
\end{itemize}
\end{theorem}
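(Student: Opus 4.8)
The plan is to imitate the standard proof of the Ger\v{s}gorin circle theorem based on nonsingularity, but with the $G$-function pair $(g,h)$ playing the role of the deleted-row-sum function. I would prove (a) first; part (b) then follows at once from (a) together with the definitions of $\Gamma$ and ${\bf\Gamma}$.

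For (a), fix $\lambda\in\sigma(A)$ and put $B:=\lambda I-A$, so that $B$ is singular. The crucial observation is that $g$ and $h$ lie in $\mathcal{F}_n$ and hence depend only on the moduli of the off-diagonal entries; since the off-diagonal entries of $B$ are $-a_{i,j}$ with $|-a_{i,j}|=|a_{i,j}|$, this gives $g(B)=g(A)$ and $h(B)=h(A)$. Also $|D(B)|e=[\,|\lambda-a_{1,1}|,\cdots,|\lambda-a_{n,n}|\,]^{T}=|D(\lambda I-A)|e$. Now suppose, for contradiction, that $\lambda\notin\Gamma_i(A,F,g,h)$ for every $i\in\{1,\cdots,m\}$. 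Unwinding the definition of $\Gamma_i(A,F,g,h)$, this says
\[
[F(|D(\lambda I-A)|e,|D(\lambda I-A)|e)]_i>[F(g(A),h(A))]_i,\qquad i=1,\cdots,m,
\]
i.e.\ $F(|D(B)|e,|D(B)|e)>F(g(B),h(B))$. Because $(g,h)\in\mathcal{G}^F$, Definition 3.2 then forces $B=\lambda I-A$ to be nonsingular, contradicting $\lambda\in\sigma(A)$. Therefore some index $i$ with $\lambda\in\Gamma_i(A,F,g,h)$ must exist, which is (a).

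For (b), part (a) shows that every $\lambda\in\sigma(A)$ belongs to $\bigcup_{i=1}^{m}\Gamma_i(A,F,g,h)=\Gamma(A,F,g,h)$, and this holds for every choice of $(g,h)\in\mathcal{G}^F$; intersecting over all such pairs gives $\sigma(A)\subseteq{\bf\Gamma}(A,F)$. The remaining inclusion ${\bf\Gamma}(A,F)\subseteq\Gamma(A,F,g,h)$ is immediate, since ${\bf\Gamma}(A,F)$ is the intersection of a nonempty family of sets that includes $\Gamma(A,F,g,h)$ (the family is nonempty because the pair $(g,h)$ is given). No step here is genuinely difficult; the only point that needs care is the moduli-invariance $g(\lambda I-A)=g(A)$, $h(\lambda I-A)=h(A)$, which is precisely what lets the $G$-function-pair hypothesis be applied to the shifted matrix $\lambda I-A$, and it uses only the defining property of $\mathcal{F}_n$ --- monotonicity of $F$ is not needed for this theorem.
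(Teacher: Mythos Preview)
Your proof is correct and follows essentially the same route as the paper: a contradiction argument applied to the singular matrix $\lambda I-A$, using that $g,h\in\mathcal{F}_n$ forces $g(\lambda I-A)=g(A)$ and $h(\lambda I-A)=h(A)$, then invoking Definition~3.2. Your closing remark that monotonicity of $F$ is not actually used here is accurate and worth noting.
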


\begin{proof}
Assume that there exists $\lambda \in \sigma(A)$ such that for all $i\in\{1, \cdots, m\}$,
$\lambda \notin \Gamma_i(A,F,g,h)$,
i.e.,
\begin{eqnarray*}
[F(|D(\lambda I-A)|e,|D(\lambda I-A)|e)]_i > [F(g(A),h(A))]_i, \; i = 1, \cdots, m.
 \end{eqnarray*}

Let $\bar{A} = \lambda I - A$. Then $\bar{A}$ is singular. Furthermore, the off-diagonal entries of $\bar{A}$ and $-A$ are same.
Hence, for $i = 1, \cdots, m$, we have
\begin{eqnarray*}
[F(|D(\bar{A}|)e,|D(\bar{A})|e)]_i & = & [F(|D(\lambda I-A)|e,|D(\lambda I-A)|e)]_i \\
& > &  [F(g(A),h(A))]_i \\
& = &  [F(g(\bar{A}),h(\bar{A}))]_i.
 \end{eqnarray*}
Since $(g,h)\in \mathcal{G}^F$, it follows that $\bar{A}$ is nonsingular. This is a contradiction.

Now (a) has been proved.

By (a), (b) is obvious.
\end{proof}

Just as the equivalence between the Ger\v{s}gorin circle
theorem and the strictly diagonally dominant theorem, we can easily prove the following
inverse theorem.

\begin{theorem} \label{thm5-2}
Let $(g,h)\in \mathcal{F}_n\times \mathcal{F}_n$, and let
$F: \mathbb{R}_+^n\times \mathbb{R}_+^n\rightarrow \mathbb{R}_+^m$ for $m \ge 1$
is a monotonic function. Suppose that
for any $A\in \mathbb{C}^{n\times n}$ and for any $\lambda\in\sigma(A)$, there exists
$i\in\{1, \cdots, m\}$
such that $\lambda \in \Gamma_i(A,F,g,h)$. Then $(g,h)$ is a $G$-function pair.
\end{theorem}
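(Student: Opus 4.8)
The plan is to establish the contrapositive of the defining property of a $G$-function pair: assuming that $(g,h)$ has the stated ``disk-covering'' property but is \emph{not} a $G$-function pair, we produce an eigenvalue of some matrix that violates the disk condition. So first I would suppose, for contradiction, that $(g,h)\in\mathcal F_n\times\mathcal F_n$ is not a $G$-function pair induced by $F$. By Definition 3.2 this means there exists a matrix $B=[b_{i,j}]\in\mathbb C^{n\times n}$ that is \emph{singular} yet satisfies
\begin{eqnarray*}
F(|D(B)|e,|D(B)|e) > F(g(B),h(B)),
\end{eqnarray*}
i.e. $[F(|D(B)|e,|D(B)|e)]_i > [F(g(B),h(B))]_i$ for every $i\in\{1,\cdots,m\}$.

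Next I would exploit the fact that $B$ is singular to locate a witness eigenvalue. Since $0\in\sigma(B)$, I set $\lambda := 0$ and $A := B$. Then $\lambda\in\sigma(A)$, and because $\lambda I - A = -B$ has the same diagonal as $-B$ — more to the point, $|D(\lambda I - A)|e = |D(B)|e$ — and the off-diagonal moduli of $\lambda I - A = -B$ agree with those of $B$ (so $g(\lambda I - A) = g(-B)$ depends only on off-diagonal moduli and equals $g(B)$, and likewise for $h$; here I use condition (ii) of Definition 2.7 that $f_k$ depends only on the moduli of the off-diagonal entries), the strict inequality above reads
\begin{eqnarray*}
[F(|D(\lambda I - A)|e,|D(\lambda I - A)|e)]_i > [F(g(A),h(A))]_i, \quad i = 1,\cdots,m.
\end{eqnarray*}
This says precisely that $\lambda\notin\Gamma_i(A,F,g,h)$ for every $i\in\{1,\cdots,m\}$, contradicting the hypothesis that every eigenvalue of every matrix lies in one of these disks. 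Hence no such singular $B$ exists, which is exactly the statement that $(g,h)$ is a $G$-function pair.

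The only delicate point — and the step I would be most careful about — is the bookkeeping that ensures the disk condition evaluated at the pair $(A,\lambda) = (B,0)$ reproduces verbatim the failure inequality for $B$. Concretely one must check two things: that $D(\lambda I - A) = \lambda I - D(A)$ so that $|D(0\cdot I - B)|e = |D(B)|e$ (trivial since $\lambda = 0$, but worth stating), and that $g(A) = g(\lambda I - A)$ and $h(A) = h(\lambda I - A)$, which is where the ``depends only on off-diagonal moduli'' clause of $\mathcal F_n$ is essential — note that adding $\lambda I$ changes only diagonal entries and $\lambda = 0$ anyway. The monotonicity of $F$ is not actually needed for this direction (it was needed for the forward direction in Theorem 5.1); I would remark on that. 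Everything else is a direct unwinding of the definitions of $\Gamma_i(A,F,g,h)$ and $\mathcal G^F$, so the proof is short once the substitution $\lambda = 0$, $A = B$ is made.
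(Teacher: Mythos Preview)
Your argument is correct and is exactly the standard one the paper has in mind; the paper in fact omits the proof entirely, saying only that it ``can easily'' be shown by analogy with the equivalence between the Ger\v{s}gorin circle theorem and the strictly diagonally dominant theorem. One small simplification: in the definition of $\Gamma_i(A,F,g,h)$ the right-hand side is $[F(g(A),h(A))]_i$, not $[F(g(\lambda I-A),h(\lambda I-A))]_i$, so once you set $A=B$ there is nothing to check about $g$ and $h$ --- your ``delicate point'' about $g(A)=g(\lambda I-A)$ via the $\mathcal F_n$ off-diagonal condition is unnecessary here (though harmless).
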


Corresponding to the $G$-functions pairs given in Definition 3.3,
we define corresponding circles and ovals of Cassini.

\begin{definition} \label{den5-1}
For $A\in \mathbb{C}^{n\times n}$ and
$(g,h)\in \mathcal{F}_n\times \mathcal{F}_n$, $\alpha,\beta\in[0,1]$, $i,j \in {\cal N}$, $i \not= j$,
we define 27 kind of circles, ovals of Cassini and their sets as follows, where the symbol ``$\ast$'' denotes ``$g$", ``$g,h$", ``$g,\alpha$", ``$g,h,\alpha$" or ``$g,h,\alpha,\beta$", respectively.

\begin{itemize}
\item[$\bullet$] $\check\Gamma_i^{(k)}(A,\ast):=
\{z\in \mathbb{C}^n: |z - a_{i,i}| \le \rho_i^{(k)}(A,\ast)\}$, $k=1,2,3$, with

$\rho_i^{(1)}(A,g) = g_i(A)$,

$\rho_i^{(2)}(A,g,h,\alpha) = g_i^{\alpha}(A)h_i^{1-\alpha}(A)$,

$\rho_i^{(3)}(A,g,h,\alpha) = \alpha g_i(A) + (1-\alpha)h_i(A)$;

\item[$\circ$] $\check\Gamma_{i,j}^{(k)}(A,\ast):=
\{z\in \mathbb{C}^n: |z - a_{i,i}||z - a_{j,j}| \le \rho_{i,j}^{(k)}(A,\ast)\}$, $k=4,\cdots,13$, with

$\rho_{i,j}^{(4)}(A,g) = g_i(A)g_j(A)$,

$\rho_{i,j}^{(5)}(A,g,h) = g_i(A)h_j(A)$,

$\rho_{i,j}^{(6)}(A,g,h,\alpha) = [g_i(A)g_j(A)]^{\alpha}[h_i(A)h_j(A)]^{1-\alpha}$,

$\rho_{i,j}^{(7)}(A,g,h,\alpha) = [g_i(A)h_j(A)]^{\alpha}[g_j(A)h_i(A)]^{1-\alpha}$,

$\rho_{i,j}^{(8)}(A,g,h,\alpha) = \alpha g_i(A)g_j(A) + (1-\alpha)h_i(A)h_j(A)$,

$\rho_{i,j}^{(9)}(A,g,h,\alpha) = \alpha g_i(A)h_j(A) + (1-\alpha)g_j(A)h_i(A)$,

$\rho_{i,j}^{(10)}(A,g,h,\alpha) = [\alpha g_i(A)+(1-\alpha)h_i(A)][\alpha g_j(A) + (1-\alpha)h_j(A)]$,

$\rho_{i,j}^{(11)}(A,g,h,\alpha) = [\alpha g_i(A)+(1-\alpha)h_i(A)][\alpha h_j(A) + (1-\alpha)g_j(A)]$,

$\rho_{i,j}^{(12)}(A,g,h,\alpha) = [\alpha g_i(A)+(1-\alpha)h_j(A)][\alpha g_j(A) + (1-\alpha)h_i(A)]$,

$\rho_{i,j}^{(13)}(A,g,h,\alpha) = [\alpha g_i(A)+(1-\alpha)g_j(A)][\alpha h_j(A) + (1-\alpha)h_i(A)]$;

\item[$\circ$] $\check\Gamma_{i,j}^{(k)}(A,\ast):=
\{z\in \mathbb{C}^n: |z - a_{i,i}|^{\alpha}|z - a_{j,j}|^{1-\alpha} \le \rho_{i,j}^{(k)}(A,\ast)\}$, $k=14,\cdots,27$, with

$\rho_{i,j}^{(14)}(A,g,\alpha) =
 g_i^{\alpha}(A)g_j^{1-\alpha}(A)$,

$\rho_{i,j}^{(15)}(A,g,h,\alpha) =
g_i^{\alpha}(A)h_j^{1-\alpha}(A)$,

$\rho_{i,j}^{(16)}(A,g,\alpha) =
\alpha g_i(A)+(1-\alpha)g_j(A)$,

$\rho_{i,j}^{(17)}(A,g,h,\alpha) =
\alpha g_i(A)+(1-\alpha)h_j(A)$,

$\rho_{i,j}^{(18)}(A,g,h,\alpha,\beta) =
[g_i^{\beta}(A)h_i^{1-\beta}(A)]^\alpha [g_j^{\beta}(A)h_j^{1-\beta}(A)]^{1-\alpha}$,

$\rho_{i,j}^{(19)}(A,g,h,\alpha,\beta) =
[g_i^{\beta}(A)h_i^{1-\beta}(A)]^\alpha
[h_j^{\beta}(A) g_j^{1-\beta}(A)]^{1-\alpha}$,

$\rho_{i,j}^{(20)}(A,g,h,\alpha,\beta) =
\beta g_i^\alpha(A)g_j^{1-\alpha}(A) + (1-\beta)h_i^\alpha(A) h_j^{1-\alpha}(A)$,

$\rho_{i,j}^{(21)}(A,g,h,\alpha,\beta) =
\beta g_i^\alpha(A) h_j^{1-\alpha}(A) + (1-\beta)h_i^\alpha(A) g_j^{1-\alpha}(A)$,

$\rho_{i,j}^{(22)}(A,g,h,\alpha,\beta) =
\alpha g_i^{\beta}(A)h_i^{1-\beta}(A) + (1-\alpha)g_j^{\beta}(A)h_j^{1-\beta}(A)$,

$\rho_{i,j}^{(23)}(A,g,h,\alpha,\beta) =
\alpha g_i^{\beta}(A)h_i^{1-\beta}(A) + (1-\alpha)h_j^{\beta}(A)g_j^{1-\beta}(A)$,

$\rho_{i,j}^{(24)}(A,g,h,\alpha,\beta) =
[\beta g_i(A)+(1-\beta)h_i(A)]^\alpha[\beta g_j(A) + (1-\beta)h_j(A)]^{1-\alpha}$,

$\rho_{i,j}^{(25)}(A,g,h,\alpha,\beta) =
[\beta g_i(A)+(1-\beta)h_i(A)]^\alpha[\beta h_j(A) + (1-\beta)g_j(A)]^{1-\alpha}$,

$\rho_{i,j}^{(26)}(A,g,h,\alpha,\beta) =
[\alpha g_i(A)+(1-\alpha)g_j(A)]^{\beta}[\alpha h_i(A) + (1-\alpha)h_j(A)]^{1-\beta}$,

$\rho_{i,j}^{(27)}(A,g,h,\alpha,\beta) =
[\alpha g_i(A)+(1-\alpha)h_j(A)]^{\beta}[\alpha h_i(A) + (1-\alpha)g_j(A)]^{1-\beta}$
\end{itemize}
and
\begin{itemize}

\item[$\bullet$]
${\check\Gamma}^{(k)}(A,\ast):= \bigcup\limits_{i=1}^n\check\Gamma_i^{(k)}(A,\ast)$, $k=1,2,3$,

${\check\Gamma}^{(k)}(A,\ast):= \bigcup\limits_{i,j=1,i\not=j}^n\check\Gamma_{i,j}^{(k)}(A,\ast)$,
$k=4, \cdots, 27$;

\item[$\bullet$] ${\bf\check\Gamma}^{(k)}(A,\mathcal{G}_n):= \bigcap\limits_{g \in \mathcal{G}_n}{\check\Gamma}^{(k)}(A,g)$, $k=1,4$,

${\bf\check\Gamma}^{(5)}(A,\mathcal{G}_n):= \bigcap\limits_{g,h \in \mathcal{G}_n}{\check\Gamma}^{(k)}(A,g,h)$,

${\bf\check\Gamma}^{(k)}(A,\mathcal{G}_n):= \bigcap\limits_{g\in \mathcal{G}_n; \alpha\in[0,1]} {\check\Gamma}^{(k)}(A,g,\alpha)$,
$k=14,16$,

${\bf\check\Gamma}^{(k)}(A,\mathcal{G}_n):= \bigcap\limits_{g,h \in \mathcal{G}_n; \alpha\in[0,1]} {\check\Gamma}^{(k)}(A,g,h,\alpha)$,
$k=2,3,6,\cdots,13$,15,17,

${\bf\check\Gamma}^{(k)}(A,\mathcal{G}_n):= \bigcap\limits_{g,h \in \mathcal{G}_n; \alpha,\beta\in[0,1]}{\check\Gamma}^{(k)}(A,g,h,\alpha,\beta)$, \;
$k=18,\cdots,27$.
\end{itemize}
\end{definition}

Obviously, $k=1,\cdots,27$, ${\check\Gamma}^{(k)}(A,\ast)$ are easy to determine numerically. But ${\bf\check\Gamma}^{(k)}(A,\mathcal{G}_n)$ are not the case,
for even relatively low values of $n$. Because of this, their values are more for theoretical purposes.
There will be some similar situations below.

The following lemma will give some relationships respectively among ${\check\Gamma}^{(k)}(A,\\\ast)$ and
${\bf\check\Gamma}^{(k)}(A,\mathcal{G}_n)$, $k=1,\cdots,27$.

\begin{lemma} \label{lem5-1}
For $A\in \mathbb{C}^{n\times n}$ and
$(g,h)\in \mathcal{G}_n\times \mathcal{G}_n$, $\alpha,\beta\in[0,1]$,
let ${\check\Gamma}^{(k)}(A,\ast)$ and
${\bf\check\Gamma}^{(k)}(A,\mathcal{G}_n)$ be defined by Definition 5.1, $k=1,\cdots,27$. Then

\begin{itemize}
\item[\rm(1)]
${\check\Gamma}^{(1)}(A,g) = {\check\Gamma}^{(\xi)}(A,g,g,\alpha) = {\check\Gamma}^{(\varsigma)}(A,g,1) = {\check\Gamma}^{(\mu)}(A,g,h,1)  \\
= {\check\Gamma}^{(\nu)}(A,g,h,1,1) = {\check\Gamma}^{(\zeta)}(A,g,h,0,1) = {\check\Gamma}^{(\eta)}(A,g,h,0,0) \\
\supseteq [{\check\Gamma}^{(4)}(A,g)\cup{\check\Gamma}^{(14)}(A,g,\alpha)]$,

$\xi=2,3$, $\varsigma=14,16$, $\mu=2,3,15,17$, $\nu=18,\cdots,27$, \\ $\zeta=18,20,22,24,26$, $\eta=19,21,23,25,27$;

\item[\rm(2)]
${\check\Gamma}^{(2)}(A,g,h,\alpha) = {\check\Gamma}^{(k)}(A,g,h,1,\alpha)  \supseteq
[{\check\Gamma}^{(6)}(A,g,h,\alpha)\cup{\check\Gamma}^{(18)}(A,g,h,\alpha,\alpha)]$,
$k=18,19,22,23,26,27$;

\item[\rm(3)]
${\check\Gamma}^{(3)}(A,g,h,\alpha) = {\check\Gamma}^{(\mu)}(A,g,h,1,\alpha)   \supseteq
[{\check\Gamma}^{(\nu)}(A,g,h,\alpha)\cup{\check\Gamma}^{(24)}(A,g,h,\alpha,\alpha)]$,
$\mu=20,21,24,25$, $\nu = 2,10$;

\item[\rm(4)]
${\check\Gamma}^{(4)}(A,g) = {\check\Gamma}^{(14)}(A,g,\frac{1}{2}) = {\check\Gamma}^{(5)}(A,g,g) = {\check\Gamma}^{(\zeta)}(A,g,g,\alpha)
\\ = {\check\Gamma}^{(\mu)}(A,g,h,1)
 = {\check\Gamma}^{(\nu)}(A,g,h,\frac{1}{2},1)$,

$\zeta=6,\cdots,11$, $\mu=6,8,10,12$, $\nu=18,20,24$;

\item[\rm(5)]
${\check\Gamma}^{(5)}(A,g,h) = {\check\Gamma}^{(15)}(A,g,h,\frac{1}{2}) = {\check\Gamma}^{(\mu)}(A,g,h,1) = {\check\Gamma}^{(\nu)}(A,g,h,\frac{1}{2},1)$,

$\mu=7,9,11,13$, $\nu=19,21,25$;

\item[\rm(6)]
${\check\Gamma}^{(6)}(A,g,h,\alpha) = {\check\Gamma}^{(18)}(A,g,h,\frac{1}{2},\alpha) \subseteq {\check\Gamma}^{(k)}(A,g,h,\alpha)$, $k=8,10,12$;

\item[\rm(7)]
${\check\Gamma}^{(7)}(A,g,h,\alpha) = {\check\Gamma}^{(19)}(A,g,h,\frac{1}{2},\alpha) \subseteq {\check\Gamma}^{(k)}(A,g,h,\alpha)$, $k=9,11,13$;

\item[\rm(8)]
${\check\Gamma}^{(10)}(A,g,h,\alpha) = {\check\Gamma}^{(24)}(A,g,h,\frac{1}{2},\alpha)$;

\item[\rm(9)]
${\check\Gamma}^{(11)}(A,g,h,\alpha) = {\check\Gamma}^{(25)}(A,g,h,\frac{1}{2},\alpha)$;

\item[\rm(10)]
${\check\Gamma}^{(14)}(A,g,\alpha) = {\check\Gamma}^{(15)}(A,g,g,\alpha) = {\check\Gamma}^{(\mu)}(A,g,g,\alpha,\beta) = {\check\Gamma}^{(\nu)}(A,g,h,\alpha,1)\\
\subseteq {\check\Gamma}^{(16)}(A,g,\alpha) = {\check\Gamma}^{(17)}(A,g,g,\alpha)  = {\check\Gamma}^{(\zeta)}(A,g,h,\alpha,1)$,\\
$\mu=18,19,20,21,24,25$, $\nu=18,20,24$, $\zeta=22,26$;

\item[\rm(11)]
${\check\Gamma}^{(15)}(A,g,h,\alpha) = {\check\Gamma}^{(\mu)}(A,g,h,\alpha,1) \subseteq {\check\Gamma}^{(17)}(A,g,h,\alpha) \\ = {\check\Gamma}^{(\nu)}(A,g,h,\alpha,1)$,
$\mu=19,21,25$, $\nu=23,27$;

\item[\rm(12)]
${\check\Gamma}^{(18)}(A,g,h,\alpha,\beta) \subseteq {\check\Gamma}^{(k)}(A,g,h,\alpha,\beta)$, $k=20,22,24,26$;

\item[\rm(13)]
${\check\Gamma}^{(19)}(A,g,h,\alpha,\beta) \subseteq {\check\Gamma}^{(k)}(A,g,h,\alpha,\beta)$, $k=21,23,25,27$.

\end{itemize}

Furthermore, it holds that

\begin{itemize}
\item[\rm(a)]
${\bf\check\Gamma}^{(1)}(A,\mathcal{G}_n) = {\bf\check\Gamma}^{(k)}(A,\mathcal{G}_n)$,
$k=2,3,4,6,8,10,12$;

\item[\rm(b)]
${\bf\check\Gamma}^{(5)}(A,\mathcal{G}_n) = {\bf\check\Gamma}^{(k)}(A,\mathcal{G}_n)$, $k=7,9,11,13$;

\item[\rm(c)]
${\bf\check\Gamma}^{(14)}(A,\mathcal{G}_n) = {\bf\check\Gamma}^{(k)}(A,\mathcal{G}_n)$,
$k=18,20,24$;

\item[\rm(d)]
${\bf\check\Gamma}^{(15)}(A,\mathcal{G}_n) = {\bf\check\Gamma}^{(k)}(A,\mathcal{G}_n)$,
$k=19,21,25$;

\item[\rm(e)]
${\bf\check\Gamma}^{(16)}(A,\mathcal{G}_n) = {\bf\check\Gamma}^{(22)}(A,\mathcal{G}_n)$;

\item[\rm(f)]
${\bf\check\Gamma}^{(17)}(A,\mathcal{G}_n)= {\bf\check\Gamma}^{(23)}(A,\mathcal{G}_n)$;

\item[\rm(g)]
${\bf\check\Gamma}^{(15)}(A,\mathcal{G}_n) \subseteq {\bf\check\Gamma}^{(5)}(A,\mathcal{G}_n)\subseteq {\bf\check\Gamma}^{(1)}(A,\mathcal{G}_n)$;

\item[\rm(h)]
${\bf\check\Gamma}^{(15)}(A,\mathcal{G}_n) \subseteq {\bf\check\Gamma}^{(14)}(A,\mathcal{G}_n)\subseteq {\bf\check\Gamma}^{(26)}(A,\mathcal{G}_n) \subseteq {\bf\check\Gamma}^{(16)}(A,\mathcal{G}_n)   \subseteq {\bf\check\Gamma}^{(1)}(A,\mathcal{G}_n)$;

\item[\rm(i)]
${\bf\check\Gamma}^{(15)}(A,\mathcal{G}_n) \subseteq {\bf\check\Gamma}^{(27)}(A,\mathcal{G}_n)\subseteq {\bf\check\Gamma}^{(17)}(A,\mathcal{G}_n) \subseteq {\bf\check\Gamma}^{(16)}(A,\mathcal{G}_n)$.
\end{itemize}
\end{lemma}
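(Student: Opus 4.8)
Throughout I write $\bar A=zI-A$; since any $g,h\in\mathcal F_n$ depend only on the off-diagonal moduli and $\bar A$ and $-A$ share their off-diagonal entries, $g(\bar A)=g(A)$ and $h(\bar A)=h(A)$. Unravelling Definition 5.1, the statement ``$z\notin\check\Gamma^{(k)}(A,\ast)$'' says exactly that $\bar A$ satisfies the $k$-th strict inequality of Theorem 4.1 (when $k\in\{1,\dots,11\}\cup\{14,15,18,19,20,21,24,25\}$) or of Theorem 4.4 (when $k\in\{12,13,16,17,22,23,26,27\}$), with the same data $g,h,\alpha,\beta$. I would prove the two halves of the lemma separately.

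The relations (1)--(13) compare regions for a \emph{fixed} pair $(g,h)\in\mathcal G_n\times\mathcal G_n$ and fixed parameters, and rest on three elementary facts. First, each asserted equality $\check\Gamma^{(k)}(A,\ast_1)=\check\Gamma^{(l)}(A,\ast_2)$ is verified by checking that the radius functions $\rho^{(k)}(A,\ast_1)$ and $\rho^{(l)}(A,\ast_2)$ are identically equal after the prescribed substitution ($h=g$, $\alpha\in\{0,1\}$, $\beta\in\{0,1\}$ or $\alpha=\tfrac12$), using the convention $0^\alpha$ of Definition 3.3 at the endpoints, together with the trivial remark that, since $n\ge2$, a union over pairs $i\ne j$ of the disks $\{z:|z-a_{i,i}|\le c_i\}$ equals the union over all $i$. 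Second, each asserted inclusion $\check\Gamma^{(k)}(A,\ast)\subseteq\check\Gamma^{(l)}(A,\ast)$ between regions of the same shape follows from the pointwise estimate $\rho^{(k)}\le\rho^{(l)}$, which in every case is one or two applications of Lemma 2.5 (a larger radius yields a larger region). Third, each asserted union-type inclusion, e.g. $\check\Gamma^{(1)}(A,g)\supseteq\check\Gamma^{(4)}(A,g)\cup\check\Gamma^{(14)}(A,g,\alpha)$, follows from the observation that if a product (or, via Lemma 2.5, a weighted geometric mean) of two nonnegative numbers does not exceed the product of two bounds, then at least one of the numbers does not exceed its bound: applied to $|z-a_{i,i}|,|z-a_{j,j}|$ against the factors of $\rho^{(k)}_{i,j}$ this places $z$ into one of the single-index disks $\check\Gamma^{(1)}_i$ or $\check\Gamma^{(2)}_i$. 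Running through the sub-cases $\xi,\varsigma,\mu,\nu,\zeta,\eta$ is then pure bookkeeping with no new idea.

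For (a)--(i) I would prove the sharper statement that, for \emph{every} $k\in\{1,\dots,27\}$,
\[
{\bf\check\Gamma}^{(k)}(A,\mathcal G_n)\;=\;\Upsilon(A)\;:=\;\{\,z\in\mathbb C:\ zI-A\notin GDD\,\},
\]
from which (a)--(i) follow at once. The inclusion $\supseteq$ is the sufficiency direction of Theorems 4.1 and 4.4: if $zI-A\notin GDD$ then $\bar A$ satisfies no $k$-th inequality for any $g,h,\alpha,\beta$, i.e. $z\in\check\Gamma^{(k)}(A,\ast)$ for all admissible data, so $z\in{\bf\check\Gamma}^{(k)}(A,\mathcal G_n)$. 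For $\subseteq$, suppose $zI-A\in GDD$; then (necessity part of Theorem 4.1, using $A\in GDD$ iff $A^T\in GDD$) there are $X,Y\in D^P_n$ with $|z-a_{i,i}|>r^X_i(A)$ and $|z-a_{i,i}|>c^Y_i(A)$ for all $i$, so put $g_0=r^X$, $h_0=c^Y\in\mathcal G_n$. When $k$ lies in the Theorem 4.1 family, $\rho^{(k)}(A,g_0,h_0,\alpha,\beta)$ is a product/weighted-power combination every factor of which is strictly below the matching $|z-a_{i,i}|$, so the $k$-th defining inequality of $\check\Gamma^{(k)}$ fails at $z$ for all $\alpha,\beta$; when $k$ lies in the Theorem 4.4 family, I instead specialize the free exponent (or convex-combination parameter) to $1$, whereupon---by the identities already recorded in (1), (2), (4), (5)---$\check\Gamma^{(k)}$ at $(g_0,h_0)$ degenerates to $\check\Gamma^{(1)}$, $\check\Gamma^{(2)}$, $\check\Gamma^{(4)}$ or $\check\Gamma^{(5)}$, and $z$ lies outside each of these (for $\check\Gamma^{(2)}$ using $g_{0,i}^{\beta}h_{0,i}^{1-\beta}<|z-a_{i,i}|$). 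Hence $z\notin{\bf\check\Gamma}^{(k)}(A,\mathcal G_n)$, proving $\subseteq$. As a cheaper alternative, the equalities ${\bf\check\Gamma}^{(2)}={\bf\check\Gamma}^{(3)}={\bf\check\Gamma}^{(1)}$ also follow from relation (1) alone plus Lemmas 2.6 and 2.7, which give $g^{\alpha}h^{1-\alpha},\ \alpha g+(1-\alpha)h\in\mathcal G_n$.

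The only genuine difficulty is the $\subseteq$ direction for the eight Theorem 4.4 indices $k\in\{12,13,16,17,22,23,26,27\}$: there the naive ``every factor is dominated'' argument fails, because a convex combination of $g_{0,i}<|z-a_{i,i}|$ and $h_{0,j}<|z-a_{j,j}|$ need not lie below $|z-a_{i,i}|$---precisely the reason those $\check\Gamma^{(k)}$ carry only sufficiency-type inclusions and must be pushed through the parameter degeneration recorded in (1)--(5). Everything else is routine case-checking.
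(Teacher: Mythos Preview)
Your treatment of (1)--(13) matches the paper's: both reduce every equality to a literal identity between radius functions after the indicated substitutions, and every inclusion to one or two applications of Lemma~2.5, with the ``product $\le$ product $\Rightarrow$ some factor $\le$ its bound'' argument handling the union-type containments such as ${\check\Gamma}^{(1)}\supseteq{\check\Gamma}^{(4)}$.

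For (a)--(i) your route is genuinely different and in fact stronger. The paper argues case by case: it uses the parameter degenerations in (1)--(13) to get one inclusion, then the closure of $\mathcal G_n$ under $\alpha$-convolution and $\alpha$-weighted sum (Lemmas~2.6, 2.7) to get the reverse inclusion, and for the key identity ${\bf\check\Gamma}^{(1)}={\bf\check\Gamma}^{(4)}$ it appeals to an external result (\cite{Va01} or \cite{Va04}, Theorem~4.15). You instead prove the uniform statement ${\bf\check\Gamma}^{(k)}(A,\mathcal G_n)=\Upsilon(A)=\{z:zI-A\notin GDD\}$ for every $k$, by combining the contrapositive of Theorems~4.1/4.4 (for $\supseteq$) with the definition of $GDD$ via $g_0=r^X$, $h_0=c^Y$ (for $\subseteq$), handling the eight Theorem~4.4 indices by specializing a parameter so that the region degenerates to ${\check\Gamma}^{(1)}$, ${\check\Gamma}^{(2)}$, ${\check\Gamma}^{(4)}$ or ${\check\Gamma}^{(5)}$ via the identities already listed in (1), (4), (5). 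This is correct, more conceptual, and actually shows that all twenty-seven intersected sets coincide (so the inclusions in (g), (h), (i) are in fact equalities), whereas the paper establishes only the relations stated and relies on an outside citation for one step. The cost is that your argument hides the finer structural relationships between individual ${\check\Gamma}^{(k)}$ regions that the paper's chain of inclusions makes explicit; the benefit is a shorter, self-contained proof that subsumes (a)--(i) at once.
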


\begin{proof}
By  Lemma 2.5, we can derive (1)-(13),
where ${\check\Gamma}^{(1)}(A,g)
\supseteq {\check\Gamma}^{(4)}(A,g)$
follows by [52, Lemma 5.22] and it can be proved similarly that
${\check\Gamma}^{(1)}(A,g)
\supseteq {\check\Gamma}^{(14)}(A,g,\alpha)$,
${\check\Gamma}^{(2)}(A,g,h,\alpha) \supseteq [{\check\Gamma}^{(6)}(A,g,h,\alpha)\cup
{\check\Gamma}^{(18)}(A,g,h,\alpha,\alpha)]$ and
${\check\Gamma}^{(3)}(A,\\g,h,\alpha) \supseteq [{\check\Gamma}^{(10)}(A,g,h,\alpha)\cup
{\check\Gamma}^{(24)}(A,g,h,\alpha,\alpha)]$.

Now we prove (a)-(i).

From (1) and (3) we have ${\bf\check\Gamma}^{(2)}(A,\mathcal{G}_n) \subseteq
{\bf\check\Gamma}^{(3)}(A,\mathcal{G}_n) \subseteq {\bf\check\Gamma}^{(1)}(A,\mathcal{G}_n)$.
While, since $f:= g^\alpha h^{1-\alpha} \in \mathcal{G}_n$ it follows that
${\bf\check\Gamma}^{(1)}(A,\mathcal{G}_n) \subseteq
{\bf\check\Gamma}^{(2)}(A,\mathcal{G}_n)$, so that ${\bf\check\Gamma}^{(1)}(A,\mathcal{G}_n) = {\bf\check\Gamma}^{(2)}(A,\mathcal{G}_n) = {\bf\check\Gamma}^{(3)}(A,\mathcal{G}_n)$ holds.
From (4) and (6) we have ${\bf\check\Gamma}^{(6)}(A,\mathcal{G}_n) \subseteq
{\bf\check\Gamma}^{(k)}(A,\mathcal{G}_n)
\subseteq {\bf\check\Gamma}^{(4)}(A,\mathcal{G}_n)$ for $k=8,10$.
While, since $f= g^\alpha h^{1-\alpha} \in \mathcal{G}_n$, it follows that
${\bf\check\Gamma}^{(4)}(A,\mathcal{G}_n)
\subseteq
{\bf\check\Gamma}^{(6)}(A,\mathcal{G}_n)$, so that ${\bf\check\Gamma}^{(4)}(A,\mathcal{G}_n) =
{\bf\check\Gamma}^{(k)}(A,\mathcal{G}_n)$ for $k=6,8,10$.
Now, for $k =1,4$, by Lemmas 2.3 and 2.3, we derive directly
\begin{eqnarray*}
{\bf\check\Gamma}^{(k)}(A,\mathcal{G}_n)
= \bigcap\limits_{g \in \mathcal{G}_n}{\check\Gamma}^{(k)}(A,g) = \bigcap\limits_{X \in D^P_n}{\check\Gamma}^{(k)}(A,\tilde{r}^X).
\end{eqnarray*}
By [51, Theorem 5] or [52, Theorem 4.15]
it can be proved that ${\bf\check\Gamma}^{(1)}(A,\mathcal{G}_n) =
{\bf\check\Gamma}^{(4)}(A,\mathcal{G}_n)$.
By (6) it gets that ${\bf\check\Gamma}^{(12)}(A,\mathcal{G}_n) \supseteq {\bf\check\Gamma}^{(6)}(A,\mathcal{G}_n)$.
While, by (4) we have
${\bf\check\Gamma}^{(12)}(A,\mathcal{G}_n) \subseteq {\bf\check\Gamma}^{(4)}(A,\mathcal{G}_n)$. Hence
${\bf\check\Gamma}^{(12)}(A,\mathcal{G}_n) = {\bf\check\Gamma}^{(4)}(A,\mathcal{G}_n)$.
This shows (a).

From (5) and (7) we can prove ${\bf\check\Gamma}^{(7)}(A,\mathcal{G}_n) \subseteq {\bf\check\Gamma}^{(k)}(A,\mathcal{G}_n) \subseteq {\bf\check\Gamma}^{(5)}(A,\mathcal{G}_n)$
for $k=9,11,13$. While, since $f=g^\alpha h^{1-\alpha}\in \mathcal{G}_n$, $\tilde{f}:=h^\alpha g^{1-\alpha} \in \mathcal{G}_n$, then
${\bf\check\Gamma}^{(7)}(A,\mathcal{G}_n) \supseteq {\bf\check\Gamma}^{(5)}(A,\mathcal{G}_n)$. Hence (b) holds.

By (10) and (12) it gets that ${\bf\check\Gamma}^{(18)}(A,\mathcal{G}_n) \subseteq {\bf\check\Gamma}^{(k)}(A,\mathcal{G}_n)
\subseteq {\bf\check\Gamma}^{(14)}(A,\mathcal{G}_n)$, $k=20,24$. While, since $\hat{f}:=g^\beta h^{1-\beta} \in \mathcal{G}_n$, then
${\bf\check\Gamma}^{(14)}(A,\mathcal{G}_n) \subseteq {\bf\check\Gamma}^{(18)}(A,\mathcal{G}_n)$, so that ${\bf\check\Gamma}^{(14)}(A,\mathcal{G}_n) =
{\bf\check\Gamma}^{(k)}(A,\mathcal{G}_n)$, $k=18,20,24$.
(c) has been proved.

Similarly, by (11), (13) and $\hat{f}=g^\beta h^{1-\beta}\in \mathcal{G}_n$, $\check{f}:=h^\beta g^{1-\beta} \in \mathcal{G}_n$, we can prove (d).

It gets that
${\bf\check\Gamma}^{(16)}(A,\mathcal{G}_n)
\supseteq {\bf\check\Gamma}^{(22)}(A,\mathcal{G}_n)$ by (10) and ${\bf\check\Gamma}^{(16)}(A,\mathcal{G}_n)
\subseteq {\bf\check\Gamma}^{(22)}(A,\\\mathcal{G}_n)$ by $\hat{f}, \check{f} \in \mathcal{G}_n$, so that (e) holds.

The proof of (f) is completely similar.

We can prove (g) from (4) and (5), (h) from (1), (10) and (12), (i) from (10), (11) and (13).
\end{proof}

Although we have proved that some ${\bf\check\Gamma}^{(k)}(A,\mathcal{G}_n)$ are same sets, it does not mean that the corresponding ${\check\Gamma}^{(k)}(A,\ast)$ are the same one. There are some similar situations below.

Based on Theorems 4.1, 5.1 and Lemma 5.1,
the following eigenvalue inclusion theorem is directly.

\begin{theorem} \label{thm5-3}
For $A\in \mathbb{C}^{n\times n}$ and
$(g,h)\in \mathcal{G}_n\times \mathcal{G}_n$, $\alpha,\beta\in[0,1]$,
let ${\check\Gamma}^{(k)}(A,\ast)$ and
${\bf\check\Gamma}^{(k)}(A,\mathcal{G}_n)$ be defined by Definition 5.1, $k=1,\cdots,27$. Then the following results hold:

\begin{itemize}
\item[\rm(a)]
For any $\lambda \in \sigma(A)$, there exists $i_k \in {\cal N}$ such that

$\lambda \in
\check\Gamma_{i_k}^{(k)}(A,\ast)$ for $k=1,2,3$,

and there exist $i_k, j_k \in {\cal N}$ with $i_k\not=j_k$ such that

$\lambda \in
\check\Gamma_{i_k,j_k}^{(k)}(A,\ast)$ for $k=4,\cdots,27$;

\item[\rm(b)]
$\sigma(A) \subseteq {\bf\check\Gamma}^{(k)}(A,\mathcal{G}_n) \subseteq {\check\Gamma}^{(k)}(A,\ast)$ for
$k=1,\cdots,27$;

\item[\rm(c)]
$\sigma(A) \subseteq \bigcap\limits_{1\le k \le 27}{\bf\check\Gamma}^{(k)}(A,\mathcal{G}_n) = {\bf\check\Gamma}^{(15)}(A,\mathcal{G}_n)$.
\end{itemize}
\end{theorem}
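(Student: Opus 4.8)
The plan is to obtain each of the three conclusions as a routine consequence of the machinery already assembled. For part (a), I would fix $\lambda\in\sigma(A)$ and, for each $k$, apply Theorem 5.1(a) to the $G$-function pair and the monotonic function $F$ whose induced inclusion disk/oval is exactly $\check\Gamma^{(k)}$. Concretely, each $\rho^{(k)}_{i,j}$ in Definition 5.1 is, by inspection, the $(i,j)$-component of one of the functions $F_{\mu,\alpha}$ or $F_{\nu,\alpha,\beta}$ from Definition 3.3 evaluated at $(g(A),h(A))$, and the defining inequality of $\check\Gamma^{(k)}_{i,j}(A,\ast)$ is precisely $[F(|D(\lambda I-A)|e,|D(\lambda I-A)|e)]_{(i,j)}\le[F(g(A),h(A))]_{(i,j)}$; since $(g,h)\in\mathcal G_n\times\mathcal G_n\subseteq\mathcal G^F$ by Theorem 3.4, Theorem 5.1(a) yields an index (a single $i_k$ for $k=1,2,3$, a pair $i_k\ne j_k$ for $k=4,\dots,27$) with $\lambda$ in the corresponding set. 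The cases $k=1,2,3$ are handled the same way with the one-index functions (equivalently, via the $j$-independent entries of $F_{1,\alpha}$, $F_{8,\alpha,\beta}$, etc.), or simply by invoking the one-index part of Theorem 5.1.

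For part (b), taking the union over $i$ (or over $i\ne j$) in part (a) gives $\sigma(A)\subseteq\check\Gamma^{(k)}(A,\ast)$ for every admissible choice of $(g,h,\alpha,\beta)$; intersecting over all $g,h\in\mathcal G_n$ and all parameters in $[0,1]$ then gives $\sigma(A)\subseteq{\bf\check\Gamma}^{(k)}(A,\mathcal G_n)$, and the second inclusion ${\bf\check\Gamma}^{(k)}(A,\mathcal G_n)\subseteq\check\Gamma^{(k)}(A,\ast)$ is immediate from the definition of ${\bf\check\Gamma}^{(k)}$ as an intersection containing the set $\check\Gamma^{(k)}(A,\ast)$. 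For part (c), the equality $\bigcap_{1\le k\le 27}{\bf\check\Gamma}^{(k)}(A,\mathcal G_n)={\bf\check\Gamma}^{(15)}(A,\mathcal G_n)$ is read off from Lemma 5.1: the containments (g), (h), (i) show that ${\bf\check\Gamma}^{(15)}(A,\mathcal G_n)$ is contained in ${\bf\check\Gamma}^{(1)},{\bf\check\Gamma}^{(5)},{\bf\check\Gamma}^{(14)},{\bf\check\Gamma}^{(16)},{\bf\check\Gamma}^{(17)},{\bf\check\Gamma}^{(26)},{\bf\check\Gamma}^{(27)}$, and the identifications (a)--(f) of Lemma 5.1 collapse all remaining indices onto this short list; hence ${\bf\check\Gamma}^{(15)}$ is the smallest among all twenty-seven, so the intersection equals it. The required inclusion $\sigma(A)\subseteq{\bf\check\Gamma}^{(15)}(A,\mathcal G_n)$ is then the $k=15$ case of part (b).

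The only genuinely non-mechanical point is the bookkeeping in part (c): one must verify that \emph{every} index $k\in\{1,\dots,27\}$ is, via Lemma 5.1(a)--(i), dominated (in the sense of set inclusion) by ${\bf\check\Gamma}^{(15)}(A,\mathcal G_n)$ — i.e.\ that (g)--(i) together with the equalities (a)--(f) really do cover the whole range, with no index left unaccounted for. I would organize this as a short case check: indices $1,2,3,4,6,8,10,12$ are equal to ${\bf\check\Gamma}^{(1)}$ by (a), indices $5,7,9,11,13$ equal ${\bf\check\Gamma}^{(5)}$ by (b), indices $14,18,20,24$ equal ${\bf\check\Gamma}^{(14)}$ by (c), indices $15,19,21,25$ equal ${\bf\check\Gamma}^{(15)}$ by (d), indices $16,22$ equal ${\bf\check\Gamma}^{(16)}$ by (e), indices $17,23$ equal ${\bf\check\Gamma}^{(17)}$ by (f), and the two stragglers $26,27$ are handled by (h) and (i); then (g)--(i) give ${\bf\check\Gamma}^{(15)}\subseteq{\bf\check\Gamma}^{(1)},{\bf\check\Gamma}^{(5)},{\bf\check\Gamma}^{(14)},{\bf\check\Gamma}^{(16)},{\bf\check\Gamma}^{(17)}$ and trivially ${\bf\check\Gamma}^{(15)}\subseteq{\bf\check\Gamma}^{(15)}$, so ${\bf\check\Gamma}^{(15)}$ lies in all twenty-seven and therefore in their intersection, while the reverse inclusion is trivial since $15$ is one of the indices. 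This completes the proof.
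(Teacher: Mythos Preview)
Your overall architecture matches the paper's (the paper itself gives only the one-line ``Based on Theorems 4.1, 5.1 and Lemma 5.1, the following eigenvalue inclusion theorem is directly''), and your bookkeeping for part (c) via Lemma 5.1(a)--(i) is exactly right. There is, however, one inaccuracy in your plan for part (a): the assertion that ``each $\rho^{(k)}_{i,j}$ in Definition 5.1 is, by inspection, the $(i,j)$-component of one of the functions $F_{\mu,\alpha}$ or $F_{\nu,\alpha,\beta}$ from Definition 3.3'' is false for the eight indices $k\in\{12,13,16,17,22,23,26,27\}$. For instance $\rho^{(12)}_{i,j}=[\alpha g_i+(1-\alpha)h_j][\alpha g_j+(1-\alpha)h_i]$ has the $h$-subscripts crossed in a way that matches none of $F_1,\dots,F_{13}$, so Theorem 3.4 does not directly apply and you cannot invoke Theorem 5.1 for these cases as written.

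The fix is easy and is precisely the content of Theorem 4.4: for each of those eight $k$, the strict inequality $|z-a_{i,i}|^{\cdots}|z-a_{j,j}|^{\cdots}>\rho^{(k)}_{i,j}(A,\ast)$ implies, via Lemma 2.5 (AM--GM), the corresponding strict inequality for one of the indices already covered by Theorem 4.1 (equivalently, by the thirteen $F$'s). Hence if $\lambda\notin\check\Gamma^{(k)}_{i,j}(A,\ast)$ for all $i\ne j$, then $\lambda I-A$ satisfies one of the hypotheses of Theorem 4.1 and is therefore nonsingular, giving the contradiction. So for part (a) you should appeal to Theorem 4.1 together with Theorem 4.4 (or equivalently observe that each of the eight extra $\rho^{(k)}$'s defines a new monotonic $F$ for which $\mathcal G_n\times\mathcal G_n\subseteq\mathcal G^F$ by the AM--GM reduction), rather than claiming all twenty-seven arise directly from Definition 3.3.
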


From this theorem and Lemma 5.1 it is easy to see that ${\bf\check\Gamma}^{(15)}(A,\mathcal{G}_n) = {\bf\check\Gamma}^{(\mu)}(A,\mathcal{G}_n)$ with $\mu=19,21,25$
is the minimal eigenvalue inclusion set among ${\bf\check\Gamma}^{(k)}(A,\mathcal{G}_n)$ and
${\check\Gamma}^{(k)}(A,\ast)$, $k=1,\cdots,27$.

When the $G$-function pair $(g,h)$ in Definition 5.1 equals $(\tilde{r}^X, \tilde{r}^X)$, $(\tilde{c}^Y, \tilde{c}^Y)$, $(\tilde{r}^X, \tilde{c}^Y)$, $(r^X, r^X)$, $(c^Y, c^Y)$ or $(r^X, c^Y)$, we can derive some special circles and ovals of Cassini.

\begin{definition} \label{den5-2}
For $A\in \mathbb{C}^{n\times n}$ and
$X,Y \in D^P_n$, $\alpha,\beta\in[0,1]$, $i,j \in {\cal N}$, $i \not= j$,
we define 31 kind of circles, ovals of Cassini and their sets as follows, where the symbol ``$\star$"
denotes ``$\tilde{r}^X$", ``$\tilde{c}^Y$", ``$\tilde{r}^X,\alpha$", ``$\tilde{c}^Y,\alpha$", ``$\tilde{r}^X,\tilde{c}^Y$", ``$\tilde{r}^X,\tilde{c}^Y,\alpha$", ``$\tilde{r}^X,\tilde{c}^Y,\alpha,\beta$", respectively.

\begin{itemize}
\item[$\bullet$] $\hat{\Gamma}_i^{(k)}(A,\star):=
\{z\in \mathbb{C}^n: |z - a_{i,i}| \le \rho_i^{(k)}(A,\star)\}$, $k=1,2,3,4$, with

$\rho_i^{(1)}(A,\tilde{r}^X) = \tilde{r}_i^X(A)$,

$\rho_i^{(2)}(A,\tilde{c}^Y) = \tilde{c}_i^Y(A)$,

$\rho_i^{(3)}(A,\tilde{r}^X,\tilde{c}^Y,\alpha) = [\tilde{r}_i^X(A)]^{\alpha}[\tilde{c}_i^Y(A)]^{1-\alpha}$,

$\rho_i^{(4)}(A,\tilde{r}^X,\tilde{c}^Y,\alpha) = \alpha\tilde{r}_i^X(A) + (1-\alpha)\tilde{c}_i^Y(A)$;

\item[$\circ$] $\hat{\Gamma}_{i,j}^{(k)}(A,\star):=
\{z\in \mathbb{C}^n: |z - a_{i,i}| |z - a_{j,j}| \le \rho_{i,j}^{(k)}(A,\star)\}$,
$k=5,\cdots,15$, with

$\rho_{i,j}^{(5)}(A,\tilde{r}^X) =
\tilde{r}_i^X(A)\tilde{r}_j^X(A)$,

$\rho_{i,j}^{(6)}(A,\tilde{c}^Y) =
\tilde{c}_i^Y(A)\tilde{c}_j^Y(A)$,

$\rho_{i,j}^{(7)}(A,\tilde{r}^X,\tilde{c}^Y) =
\tilde{r}_i^X(A)\tilde{c}_j^Y(A)$,

$\rho_{i,j}^{(8)}(A,\tilde{r}^X,\tilde{c}^Y,\alpha) =
[\tilde{r}_i^X(A)\tilde{r}_j^X(A)]^{\alpha}[\tilde{c}_i^Y(A)\tilde{c}_j^Y(A)]^{1-\alpha}$,

$\rho_{i,j}^{(9)}(A,\tilde{r}^X,\tilde{c}^Y,\alpha) =
[\tilde{r}_i^X(A)\tilde{c}_j^Y(A)]^{\alpha}[\tilde{r}_j^X(A)\tilde{c}_i^Y(A)]^{1-\alpha}$,

$\rho_{i,j}^{(10)}(A,\tilde{r}^X,\tilde{c}^Y,\alpha) =
\alpha\tilde{r}_i^X(A)\tilde{r}_j^X(A) + (1-\alpha)\tilde{c}_i^Y(A)\tilde{c}_j^Y(A)$,

$\rho_{i,j}^{(11)}(A,\tilde{r}^X,\tilde{c}^Y,\alpha) =
\alpha\tilde{r}_i^X(A)\tilde{c}_j^Y(A) + (1-\alpha)\tilde{r}_j^X(A)\tilde{c}_i^Y(A)$,

$\rho_{i,j}^{(12)}(A,\tilde{r}^X,\tilde{c}^Y,\alpha) =
[\alpha\tilde{r}_i^X(A)+(1-\alpha)\tilde{c}_i^Y(A)][\alpha\tilde{r}_j^X(A) + (1-\alpha)\tilde{c}_j^Y(A)]$,

$\rho_{i,j}^{(13)}(A,\tilde{r}^X,\tilde{c}^Y,\alpha) =
[\alpha\tilde{r}_i^X(A)+(1-\alpha)\tilde{c}_i^Y(A)][\alpha\tilde{c}_j^Y(A) + (1-\alpha)\tilde{r}_j^X(A)]$,

$\rho_{i,j}^{(14)}(A,\tilde{r}^X,\tilde{c}^Y,\alpha) =
[\alpha\tilde{r}_i^X(A)+(1-\alpha)\tilde{c}_j^Y(A)][\alpha\tilde{r}_j^X(A) + (1-\alpha)\tilde{c}_i^Y(A)]$,

$\rho_{i,j}^{(15)}(A,\tilde{r}^X,\tilde{c}^Y,\alpha) =
[\alpha\tilde{r}_i^X(A)+(1-\alpha)\tilde{r}_j^X(A)][\alpha\tilde{c}_j^Y(A) + (1-\alpha)\tilde{c}_i^Y(A)]$;

\item[$\circ$] $\hat{\Gamma}_{i,j}^{(k)}(A,\star):=
\{z\in \mathbb{C}^n: |z - a_{i,i}|^{\alpha}|z - a_{j,j}|^{1-\alpha} \le \rho_{i,j}^{(k)}(A,\star)\}$,
$k=16,\cdots,31$, with

$\rho_{i,j}^{(16)}(A,\tilde{r}^X,\alpha) =
[\tilde{r}_i^X(A)]^{\alpha}[\tilde{r}_j^X(A)]^{1-\alpha}$,

$\rho_{i,j}^{(17)}(A,\tilde{c}^Y,\alpha) =
[\tilde{c}_i^Y(A)]^{\alpha}[\tilde{c}_j^Y(A)]^{1-\alpha}$;

$\rho_{i,j}^{(18)}(A,\tilde{r}^X,\tilde{c}^Y,\alpha) =
[\tilde{r}_i^X(A)]^{\alpha}[\tilde{c}_j^Y(A)]^{1-\alpha}$,

$\rho_{i,j}^{(19)}(A,\tilde{r}^X,\alpha) =
\alpha \tilde{r}_i^X(A)+(1-\alpha)\tilde{r}_j^X(A)$,

$\rho_{i,j}^{(20)}(A,\tilde{c}^Y,\alpha) =
\alpha \tilde{c}_i^Y(A)+(1-\alpha)\tilde{c}_j^Y(A)$,

$\rho_{i,j}^{(21)}(A,\tilde{r}^X,\tilde{c}^Y,\alpha) =
\alpha \tilde{r}_i^X(A)+(1-\alpha)\tilde{c}_j^Y(A)$,

{\small $\rho_{i,j}^{(22)}(A,\tilde{r}^X,\tilde{c}^Y,\alpha,\beta) =
([\tilde{r}_i^X(A)]^{\beta}[\tilde{c}_i^Y(A)]^{1-\beta})^\alpha ([\tilde{r}_j^X(A)]^{\beta}[\tilde{c}_j^Y(A)]^{1-\beta})^{1-\alpha}$,

$\rho_{i,j}^{(23)}(A,\tilde{r}^X,\tilde{c}^Y,\alpha,\beta) =
([\tilde{r}_i^X(A)]^{\beta}[\tilde{c}_i^Y(A)]^{1-\beta})^\alpha
([\tilde{c}_j^Y(A)]^{\beta} [\tilde{r}_j^X(A)]^{1-\beta})^{1-\alpha}$,

$\rho_{i,j}^{(24)}(A,\tilde{r}^X,\tilde{c}^Y,\alpha,\beta) =
\beta [\tilde{r}_i^X(A)]^\alpha[\tilde{r}_j^X(A)]^{1-\alpha} + (1-\beta)[\tilde{c}_i^Y(A)]^\alpha [\tilde{c}_j^Y(A)]^{1-\alpha}$,

$\rho_{i,j}^{(25)}(A,\tilde{r}^X,\tilde{c}^Y,\alpha,\beta) =
\beta [\tilde{r}_i^X(A)]^\alpha [\tilde{c}_j^Y(A)]^{1-\alpha} + (1-\beta)[\tilde{c}_i^Y(A)]^\alpha [\tilde{r}_j^X(A)]^{1-\alpha}$,

$\rho_{i,j}^{(26)}(A,\tilde{r}^X,\tilde{c}^Y,\alpha,\beta) =
\alpha [\tilde{r}_i^X(A)]^{\beta}[\tilde{c}_i^Y(A)]^{1-\beta} + (1-\alpha)[\tilde{r}_j^X(A)]^{\beta}[\tilde{c}_j^Y(A)]^{1-\beta}$,

$\rho_{i,j}^{(27)}(A,\tilde{r}^X,\tilde{c}^Y,\alpha,\beta) =
\alpha [\tilde{r}_i^X(A)]^{\beta}[\tilde{c}_i^Y(A)]^{1-\beta} + (1-\alpha)[\tilde{c}_j^Y(A)]^{\beta}[\tilde{r}_j^X(A)]^{1-\beta}$,

$\rho_{i,j}^{(28)}(A,\tilde{r}^X,\tilde{c}^Y,\alpha,\beta) =
[\beta \tilde{r}_i^X(A)+(1-\beta)\tilde{c}_i^Y(A)]^\alpha[\beta \tilde{r}_j^X(A) + (1-\beta)\tilde{c}_j^Y(A)]^{1-\alpha}$,

$\rho_{i,j}^{(29)}(A,\tilde{r}^X,\tilde{c}^Y,\alpha,\beta) =
[\beta \tilde{r}_i^X(A)+(1-\beta)\tilde{c}_i^Y(A)]^\alpha[\beta \tilde{c}_j^Y(A) + (1-\beta)\tilde{r}_j^X(A)]^{1-\alpha}$,

$\rho_{i,j}^{(30)}(A,\tilde{r}^X,\tilde{c}^Y,\alpha,\beta) =
[\alpha \tilde{r}_i^X(A)+(1-\alpha)\tilde{r}_j^X(A)]^{\beta}[\alpha \tilde{c}_i^Y(A) + (1-\alpha)\tilde{c}_j^Y(A)]^{1-\beta}$,

$\rho_{i,j}^{(31)}(A,\tilde{r}^X,\tilde{c}^Y,\alpha,\beta) =
[\alpha \tilde{r}_i^X(A)+(1-\alpha)\tilde{c}_j^Y(A)]^{\beta}[\alpha \tilde{c}_i^Y(A) + (1-\alpha)\tilde{r}_j^X(A)]^{1-\beta}$}
\end{itemize}
and
  \begin{itemize}

 \item[$\bullet$] ${\hat\Gamma}^{(k)}(A,\star):= \bigcup\limits_{i=1}^n\hat{\Gamma}_i^{(k)}(A,\star)$, $k=1,2,3,4$,

${\hat\Gamma}^{(k)}(A,\star):= \bigcup\limits_{i,j=1,i\not=j}^n\hat{\Gamma}_{i,j}^{(k)}(A,\star)$,
$k=5, \cdots, 31$;

\item[$\bullet$]
${\bf\hat\Gamma}^{(k)}(A,\tilde{r},\tilde{c}):= \bigcap\limits_{X \in D^P_n}{\hat\Gamma}^{(k)}(A,\tilde{r}^X)$, $k=1,5$,

${\bf\hat\Gamma}^{(k)}(A,\tilde{r},\tilde{c}):= \bigcap\limits_{Y \in D^P_n}{\hat\Gamma}^{(k)}(A,\tilde{c}^Y)$, $k=2,6$,

${\bf\hat\Gamma}^{(k)}(A,\tilde{r},\tilde{c}):= \bigcap\limits_{X,Y \in D^P_n;\alpha\in[0,1]}{\hat\Gamma}^{(k)}(A,\tilde{r}^X,\tilde{c}^Y,\alpha)$,

\hskip 2cm $k=3,4,8,\cdots,15,18,21$,

${\bf\hat\Gamma}^{(7)}(A,\tilde{r},\tilde{c}):= \bigcap\limits_{X,Y \in D^P_n}{\hat\Gamma}^{(7)}(A,\tilde{r}^X,\tilde{c}^Y)$,

${\bf\hat\Gamma}^{(k)}(A,\tilde{r},\tilde{c}):= \bigcap\limits_{X \in D^P_n;\alpha\in[0,1]}{\hat\Gamma}^{(k)}(A,\tilde{r}^X,\alpha)$,
$k=16,19$,

${\bf\hat\Gamma}^{(k)}(A,\tilde{r},\tilde{c}):= \bigcap\limits_{Y \in D^P_n;\alpha\in[0,1]}{\hat\Gamma}^{(k)}(A,\tilde{c}^Y,\alpha)$,
$k=17,20$,

${\bf\hat\Gamma}^{(k)}(A,\tilde{r},\tilde{c}):=
\bigcap\limits_{X,Y \in D^P_n;\alpha,\beta\in[0,1]}{\hat\Gamma}^{(k)}(A,\tilde{r}^X,\tilde{c}^Y,\alpha,\beta)$,
$k=22,\cdots,31$.
 \end{itemize}
\end{definition}

For $k=1$, although it is independent of $\tilde{c}$, we still use sign
${\bf\hat\Gamma}^{(1)}(A,\tilde{r},\tilde{c})$ for the sake of simplicity and unity.
There are several similar cases behind.

Obviously, for $k=1,\cdots,31$, ${\hat\Gamma}^{(k)}(A,\star)$ are easy to determine numerically, but ${\bf\hat\Gamma}^{(k)}(A,\tilde{r},\tilde{c})$ are not the case. The same is true for several cases below.

Similar to Lemma 5.1, we have the relationships among  ${\hat\Gamma}^{(k)}(A,\star)$ and
${\bf\hat\Gamma}^{(k)}(A,\tilde{r}^X,\tilde{c}^Y)$, $k=1,\cdots,27$.

\begin{lemma} \label{lem5-2}
For $A\in \mathbb{C}^{n\times n}$ and
$X,Y \in D^P_n$, $\alpha,\beta\in[0,1]$,
let ${\hat\Gamma}^{(k)}(A,\star)$ and ${\bf\hat\Gamma}^{(k)}(A,\tilde{r},\tilde{c})$ be defined by Definition 5.2, $k=1,\cdots,31$. Then

\begin{itemize}
\item[\rm(1)]
${\hat\Gamma}^{(1)}(A,\tilde{r}^X) = {\hat\Gamma}^{(\xi)}(A,\tilde{r}^X,\tilde{r}^X,\alpha) = {\hat\Gamma}^{(\varsigma)}(A,\tilde{r}^X,1) = {\hat\Gamma}^{(\mu)}(A,\tilde{r}^X,\tilde{c}^Y,1) \\
= {\hat\Gamma}^{(\nu)}(A,\tilde{r}^X,\tilde{c}^Y,1,1)  = {\hat\Gamma}^{(\zeta)}(A,\tilde{r}^X,\tilde{c}^Y,0,1) = {\hat\Gamma}^{(\eta)}(A,\tilde{r}^X,\tilde{c}^Y,0,0) \\
\supseteq [{\hat\Gamma}^{(5)}(A,\tilde{r}^X,\tilde{c}^Y,\alpha)\cup{\hat\Gamma}^{(16)}(A,\tilde{r}^X,\tilde{c}^Y,\alpha)]$,

$\xi=3,4$, $\varsigma=16,19$, $\mu=3,4,18,21$, $\nu=22,\cdots,31$, \\ $\zeta=22,24,26,28,30$, $\eta=23,25,27,29,31$;

\item[\rm(2)]
${\hat\Gamma}^{(2)}(A,\tilde{c}^Y) = {\hat\Gamma}^{(\xi)}(A,\tilde{c}^Y,\tilde{c}^Y,\alpha) = {\hat\Gamma}^{(\varsigma)}(A,\tilde{c}^Y,0) = {\hat\Gamma}^{(\mu)}(A,\tilde{r}^X,\tilde{c}^Y,0) \\
= {\hat\Gamma}^{(\nu)}(A,\tilde{r}^X,\tilde{c}^Y,1,0) = {\hat\Gamma}^{(\zeta)}(A,\tilde{r}^X,\tilde{c}^Y,0,0)  = {\hat\Gamma}^{(\eta)}(A,\tilde{r}^X,\tilde{c}^Y,0,1) \\
\supseteq [{\hat\Gamma}^{(6)}(A,\tilde{r}^X,\tilde{c}^Y,\alpha)\cup{\hat\Gamma}^{(17)}(A,\tilde{r}^X,\tilde{c}^Y,\alpha)]$,

$\xi=3,4$, $\varsigma=17,20$, $\mu=3,4,18,21$, $\nu=22,\cdots,31$, \\ $\zeta=22,24,26,28,30$, $\eta=23,25,27,29,31$;

\item[\rm(3)]
${\hat\Gamma}^{(3)}(A,\tilde{r}^X,\tilde{c}^Y,\alpha) = {\hat\Gamma}^{(\mu)}(A,\tilde{r}^X,\tilde{c}^Y,1,\alpha) \\
\supseteq [{\hat\Gamma}^{(8)}(A,\tilde{r}^X,\tilde{c}^Y,\alpha)\cup{\hat\Gamma}^{(22)}(A,\tilde{r}^X,\tilde{c}^Y,\alpha,\alpha)]$,
$\mu=22,23,26,27,30,31$;

\item[\rm(4)]
${\hat\Gamma}^{(4)}(A,\tilde{r}^X,\tilde{c}^Y,\alpha) = {\hat\Gamma}^{(\mu)}(A,\tilde{r}^X,\tilde{c}^Y,1,\alpha) \\
\supseteq
[{\hat\Gamma}^{(\nu)}(A,\tilde{r}^X,\tilde{c}^Y,\alpha)\cup{\hat\Gamma}^{(28)}(A,\tilde{r}^X,\tilde{c}^Y,\alpha,\alpha)]$,
$\mu=24,25,28,29$, $\nu=3,12$;

\item[\rm(5)]
${\hat\Gamma}^{(5)}(A,\tilde{r}^X) = {\hat\Gamma}^{(7)}(A,\tilde{r}^X,\tilde{r}^X) = {\hat\Gamma}^{(16)}(A,\tilde{r}^X,\frac{1}{2}) = {\hat\Gamma}^{(\xi)}(A,\tilde{r}^X,\tilde{r}^X,\alpha) \\
= {\hat\Gamma}^{(\mu)}(A,\tilde{r}^X,\tilde{c}^Y,1) = {\hat\Gamma}^{(\nu)}(A,\tilde{r}^X,\tilde{c}^Y,\frac{1}{2},1)$,

$\xi=8,\cdots,13$, $\mu=8,10,12,14$, $\nu=22,24,28$;

\item[\rm(6)]
${\hat\Gamma}^{(6)}(A,\tilde{c}^Y) = {\hat\Gamma}^{(7)}(A,\tilde{c}^Y,\tilde{c}^Y) = {\hat\Gamma}^{(17)}(A,\tilde{c}^Y,\frac{1}{2}) = {\hat\Gamma}^{(\xi)}(A,\tilde{c}^Y,\tilde{c}^Y,\alpha) \\
= {\hat\Gamma}^{(\mu)}(A,\tilde{r}^X,\tilde{c}^Y,0) = {\hat\Gamma}^{(\nu)}(A,\tilde{r}^X,\tilde{c}^Y,\frac{1}{2},0)$,

$\xi=8,\cdots,13$, $\mu=8,10,12,14$, $\nu=22,24,28$;

\item[\rm(7)]
${\hat\Gamma}^{(7)}(A,\tilde{r}^X,\tilde{c}^Y) = {\hat\Gamma}^{(18)}(A,\tilde{r}^X,\tilde{c}^Y,\frac{1}{2}) = {\hat\Gamma}^{(\mu)}(A,\tilde{r}^X,\tilde{c}^Y,1) \\
= {\hat\Gamma}^{(\nu)}(A,\tilde{r}^X,\tilde{c}^Y,\frac{1}{2},1)$,
$\mu=9,11,13,15$, $\nu=23,25,29$;

\item[\rm(8)]
${\hat\Gamma}^{(8)}(A,\tilde{r}^X,\tilde{c}^Y,\alpha) = {\hat\Gamma}^{(22)}(A,\tilde{r}^X,\tilde{c}^Y,\frac{1}{2},\alpha) \subseteq {\hat\Gamma}^{(k)}(A,\tilde{r}^X,\tilde{c}^Y,\alpha)$,

$k=10,12,14$;

\item[\rm(9)]
${\hat\Gamma}^{(9)}(A,\tilde{r}^X,\tilde{c}^Y,\alpha) = {\hat\Gamma}^{(23)}(A,\tilde{r}^X,\tilde{c}^Y,\frac{1}{2},\alpha) \subseteq {\hat\Gamma}^{(k)}(A,\tilde{r}^X,\tilde{c}^Y,\alpha)$,

$k=11,13,15$;

\item[\rm(10)]
${\hat\Gamma}^{(12)}(A,\tilde{r}^X,\tilde{c}^Y,\alpha) = {\hat\Gamma}^{(28)}(A,\tilde{r}^X,\tilde{c}^Y,\frac{1}{2},\alpha)$;

\item[\rm(11)]
${\hat\Gamma}^{(13)}(A,\tilde{r}^X,\tilde{c}^Y,\alpha) = {\hat\Gamma}^{(29)}(A,\tilde{r}^X,\tilde{c}^Y,\frac{1}{2},\alpha)$;

\item[\rm(12)]
${\hat\Gamma}^{(16)}(A,\tilde{r}^X,\alpha) = {\hat\Gamma}^{(18)}(A,\tilde{r}^X,\tilde{r}^X,\alpha)  = {\hat\Gamma}^{(\mu)}(A,\tilde{r}^X,\tilde{r}^X,\alpha,\beta) \\
= {\hat\Gamma}^{(\nu)}(A,\tilde{r}^X,\tilde{c}^Y,\alpha,1) \subseteq {\hat\Gamma}^{(19)}(A,\tilde{r}^X,\tilde{c}^Y,\alpha) = {\hat\Gamma}^{(\zeta)}(A,\tilde{r}^X,\tilde{c}^Y,\alpha,1)$,

$\mu=22,23,24,25,28,29$, $\nu=22,24,28$, $\zeta=26,30$;

\item[\rm(13)]
${\hat\Gamma}^{(17)}(A,\tilde{c}^Y,\alpha) = {\hat\Gamma}^{(18)}(A,\tilde{c}^Y,\tilde{c}^Y,\alpha)  = {\hat\Gamma}^{(\mu)}(A,\tilde{c}^Y,\tilde{c}^Y,\alpha,\beta)\\
= {\hat\Gamma}^{(\nu)}(A,\tilde{r}^X,\tilde{c}^Y,\alpha,0) \subseteq {\hat\Gamma}^{(20)}(A,\tilde{c}^Y,\alpha) =  {\hat\Gamma}^{(\zeta)}(A,\tilde{r}^X,\tilde{c}^Y,\alpha,0)$,

$\mu=22,23,24,25,28,29$, $\nu=22,24,28$, $\zeta=26,30$;

\item[\rm(14)]
${\hat\Gamma}^{(18)}(A,\tilde{r}^X,\tilde{c}^Y,\alpha) = {\hat\Gamma}^{(\mu)}(A,\tilde{r}^X,\tilde{c}^Y,\alpha,1) \subseteq {\hat\Gamma}^{(21)}(A,\tilde{r}^X,\tilde{c}^Y,\alpha) \\
= {\hat\Gamma}^{(\nu)}(A,\tilde{r}^X,\tilde{c}^Y,\alpha,1)$,
$\mu=23,25,29$, $\nu=27,31$;

\item[\rm(15)]
${\hat\Gamma}^{(22)}(A,\tilde{r}^X,\tilde{c}^Y,\alpha,\beta) \subseteq {\hat\Gamma}^{(k)}(A,\tilde{r}^X,\tilde{c}^Y,\alpha,\beta)$,
$k=24,26,28,30$;

\item[\rm(16)]
${\hat\Gamma}^{(23)}(A,\tilde{r}^X,\tilde{c}^Y,\alpha,\beta) \subseteq {\hat\Gamma}^{(k)}(A,\tilde{r}^X,\tilde{c}^Y,\alpha,\beta)$,
$k=25,27,29,31$.
\end{itemize}

Furthermore, it holds that

\begin{itemize}
\item[\rm(a)]
${\bf\hat\Gamma}^{(1)}(A,\tilde{r},\tilde{c}) = {\bf\hat\Gamma}^{(k)}(A,\tilde{r},\tilde{c})$,
$k=2,\cdots,6,8,10,12,14$;

\item[\rm(b)]
${\bf\hat\Gamma}^{(7)}(A,\tilde{r},\tilde{c}) = {\bf\hat\Gamma}^{(k)}(A,\tilde{r},\tilde{c})$,
$k=9,11,13,15$;

\item[\rm(c)]
${\bf\hat\Gamma}^{(16)}(A,\tilde{r},\tilde{c})={\bf\hat\Gamma}^{(k)}(A,\tilde{r},\tilde{c})$,
$k=17,22,24,28$;

\item[\rm(d)]
${\bf\hat\Gamma}^{(18)}(A,\tilde{r},\tilde{c}) ={\bf\hat\Gamma}^{(k)}(A,\tilde{r},\tilde{c})$,
$k=23,25,29$;

\item[\rm(e)]
${\bf\hat\Gamma}^{(19)}(A,) = {\bf\hat\Gamma}^{(20)}(A,\tilde{r},\tilde{c}) = {\bf\hat\Gamma}^{(26)}(A,\tilde{r},\tilde{c})$;

\item[\rm(f)]
${\bf\hat\Gamma}^{(21)}(A,\tilde{r},\tilde{c}) = {\bf\hat\Gamma}^{(27)}(A,\tilde{r},\tilde{c})$;

\item[\rm(g)]
${\bf\hat\Gamma}^{(18)}(A,\tilde{r},\tilde{c}) \subseteq {\bf\hat\Gamma}^{(7)}(A,\tilde{r},\tilde{c}) \subseteq {\bf\hat\Gamma}^{(1)}(A,\tilde{r},\tilde{c})$;

\item[\rm(h)]
${\bf\hat\Gamma}^{(18)}(A,\tilde{r},\tilde{c}) \subseteq {\bf\hat\Gamma}^{(16)}(A,\tilde{r},\tilde{c}) \subseteq {\bf\hat\Gamma}^{(30)}(A,\tilde{r},\tilde{c}) \subseteq {\bf\hat\Gamma}^{(19)}(A,\tilde{r},\tilde{c}) \\ \subseteq {\bf\hat\Gamma}^{(1)}(A,\tilde{r},\tilde{c})$;

\item[\rm(i)]
${\bf\hat\Gamma}^{(18)}(A,\tilde{r},\tilde{c}) \subseteq {\bf\hat\Gamma}^{(31)}(A,\tilde{r},\tilde{c}) \subseteq {\bf\hat\Gamma}^{(21)}(A,\tilde{r},\tilde{c}) \subseteq {\bf\hat\Gamma}^{(19)}(A,\tilde{r},\tilde{c})$.
\end{itemize}
\end{lemma}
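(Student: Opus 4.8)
The plan is to run the proof in parallel with that of Lemma 5.1, exploiting the fact that each set $\hat\Gamma^{(k)}(A,\star)$ of Definition 5.2 is the corresponding set $\check\Gamma^{(k')}(A,\ast)$ of Definition 5.1 in which the $G$-function pair $(g,h)$ has been frozen to $(\tilde{r}^X,\tilde{c}^Y)$ (or to one of its one-sided variants $(\tilde{r}^X,\tilde{r}^X)$, $(\tilde{c}^Y,\tilde{c}^Y)$, etc.). I would split the work into two blocks: the pointwise identities and inclusions (1)--(16) for fixed $X,Y,\alpha,\beta$, and then the intersection statements (a)--(i).

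For (1)--(16): every radius $\rho_{i,j}^{(k)}$ in Definition 5.2 is obtained from a radius in the earlier list by the substitution $g\mapsto\tilde{r}^X$, $h\mapsto\tilde{c}^Y$, so each asserted equality of $\hat\Gamma$'s reduces to plugging the indicated special values of $\alpha,\beta$ into these explicit radii and observing that two radii become the same function of $z$ (for instance $\rho_i^{(3)}(A,\tilde{r}^X,\tilde{r}^X,\alpha)=[\tilde{r}_i^X(A)]^{\alpha}[\tilde{r}_i^X(A)]^{1-\alpha}=\tilde{r}_i^X(A)=\rho_i^{(1)}(A,\tilde{r}^X)$, and likewise for the products). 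Each ``$\supseteq$'' between two sets with the same center pattern is just ``larger radius gives larger disk/oval'', the size comparison being supplied by the generalized arithmetic--geometric mean inequality (Lemma 2.5). The oval-to-union-of-disks inclusions in (1), (2) --- e.g. $\hat\Gamma^{(1)}(A,\tilde{r}^X)\supseteq[\hat\Gamma^{(5)}(A,\tilde{r}^X,\tilde{c}^Y,\alpha)\cup\hat\Gamma^{(16)}(A,\tilde{r}^X,\tilde{c}^Y,\alpha)]$ --- are instances of the Brauer-type containment used for (1) of Lemma 5.1: if $|z-a_{i,i}|\,|z-a_{j,j}|\le pq$, or more generally $|z-a_{i,i}|^{\alpha}|z-a_{j,j}|^{1-\alpha}\le p^{\alpha}q^{1-\alpha}$, then $|z-a_{i,i}|\le p$ or $|z-a_{j,j}|\le q$ (cf. [52, Lemma 5.22]). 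This block is routine and essentially identical in spirit to the derivation of (1)--(13) in Lemma 5.1.

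For (a)--(i): the key input is that, by Lemmas 2.3 and 2.4, for any $A$ and any $f\in\mathcal{G}_n$ there are $X,Y\in D^P_n$ (depending on $A$) with $f(A)\ge\tilde{r}^X(A)$ and $f(A)\ge\tilde{c}^Y(A)$, while conversely $\tilde{r}^X,\tilde{c}^Y\in\mathcal{G}_n$ by Lemma 2.2. Hence intersecting a disk set $\bigcup_i\{z:|z-a_{i,i}|\le f_i(A)\}$ over all $f\in\mathcal{G}_n$ yields the same set as intersecting over all $f=\tilde{r}^X$, $X\in D^P_n$, and the same again over all $f=\tilde{c}^Y$, $Y\in D^P_n$; the same collapse holds for the product and convex-combination radii. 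Consequently each ${\bf\hat\Gamma}^{(k)}(A,\tilde{r},\tilde{c})$ coincides with the matching ${\bf\check\Gamma}^{(k')}(A,\mathcal{G}_n)$ of Lemma 5.1, so (a)--(i) follow from parts (a)--(i) of Lemma 5.1. Where I prefer a direct re-derivation, I would sandwich: from (1)--(16) one reads off chains ${\bf\hat\Gamma}^{(k_1)}\subseteq{\bf\hat\Gamma}^{(k_2)}\subseteq{\bf\hat\Gamma}^{(k_3)}$, and then close each loop using that $(\tilde{r}^X)^{\alpha}(\tilde{c}^Y)^{1-\alpha}$ and $\beta\tilde{r}^X+(1-\beta)\tilde{c}^Y$ are themselves in $\mathcal{G}_n$ (Lemmas 2.6 and 2.7), hence by Lemma 2.3 dominate some $\tilde{r}^Z$, which forces the $\hat\Gamma$ built on the ``combined'' radius above ${\bf\hat\Gamma}^{(1)}(A,\tilde{r},\tilde{c})$. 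The chains (g)--(i) then come from combining the pointwise inclusions of (5)--(7), (12)--(14) with (a)--(f), exactly as in Lemma 5.1. The one genuinely external ingredient is the equality ${\bf\hat\Gamma}^{(1)}(A,\tilde{r},\tilde{c})={\bf\hat\Gamma}^{(5)}(A,\tilde{r},\tilde{c})$ (minimal Gerschgorin set $=$ minimal Brauer--Cassini set), which I would quote from [51, Theorem 5] / [52, Theorem 4.15] just as in the proof of Lemma 5.1.

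The main obstacle is organizational rather than conceptual: keeping the index dictionary straight between the $31$ sets here and the $27$ of Definition 5.1, and checking, in each of (a)--(i), which slot carries $\tilde{r}^X$ and which carries $\tilde{c}^Y$ so that the substitutions $\tilde{r}^X\leftrightarrow\tilde{c}^Y$ performed inside an intersection are legitimate --- they are, precisely because of the symmetric roles of Lemmas 2.3 and 2.4. The only spot needing extra attention is item (e), where ${\bf\hat\Gamma}^{(19)}$ (the $\alpha$-weighted row-sum oval), its column analogue ${\bf\hat\Gamma}^{(20)}$, and ${\bf\hat\Gamma}^{(26)}$ must be identified: one argues that $\alpha\tilde{r}_i^X+(1-\alpha)\tilde{r}_j^X$, $\alpha\tilde{c}_i^Y+(1-\alpha)\tilde{c}_j^Y$ and the mixed radius $\alpha[\tilde{r}_i^X]^{\beta}[\tilde{c}_i^Y]^{1-\beta}+(1-\alpha)[\tilde{r}_j^X]^{\beta}[\tilde{c}_j^Y]^{1-\beta}$ are all of the form $\alpha u_i(A)+(1-\alpha)u_j(A)$ with $u\in\mathcal{G}_n$, so that after passing to the intersection (and using Lemma 2.3) the three sets collapse to the same set; here the notation ``${\bf\hat\Gamma}^{(19)}(A,)$'' in the statement should be read as ${\bf\hat\Gamma}^{(19)}(A,\tilde{r},\tilde{c})$.
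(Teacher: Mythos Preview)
Your proposal is correct and follows essentially the same approach as the paper. For (1)--(16) the paper likewise writes ``Set $(g,h)=(\tilde r^X,\tilde c^Y)$. Then from (1)--(13) of Lemma 5.1 we can derive (1)--(16) immediately''; for (a)--(i) the paper reduces, via Lemma 5.1, to proving the four row--column identifications ${\bf\hat\Gamma}^{(k)}={\bf\hat\Gamma}^{(k+1)}$ for $k=1,5,16,19$ and establishes these exactly through the Lemma 2.2/2.3 sandwich argument you describe, which is just the concrete instance of your more general observation that ${\bf\hat\Gamma}^{(k)}(A,\tilde r,\tilde c)={\bf\check\Gamma}^{(k')}(A,\mathcal G_n)$.
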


\begin{proof}
Set $(g,h)=(\tilde{r}^X,\tilde{c}^Y)$. Then from (1)-(13) of Lemma 5.1 we can derive (1)-(16) immediately.

In order to prove (a)-(i), by Lemma 5.1, we just need to prove that
${\bf\hat\Gamma}^{(k)}(A,\tilde{r},\\ \tilde{c})
= {\bf\hat\Gamma}^{(k+1)}(A,\tilde{r},\tilde{c})$, $k=1,5,16,19$.

In fact, for any $Y \in D^P_n$, by Lemmas 2.2 and 2.3, there exists $X(Y) \in D^P_n$ such that
$\tilde{c}^Y(A,\tilde{r},\tilde{c}) \ge \tilde{r}^{X(Y)}(A,\tilde{r},\tilde{c})$, so that
$\rho_i^{(2)}(A,\tilde{c}^Y) \ge \rho_i^{(2)}(A,\tilde{r}^{X(Y)}) = \rho_i^{(1)}(A,\tilde{r}^{X(Y)})$ for all $i\in \cal N$,
which implies that ${\hat\Gamma}^{(2)}(A,\tilde{c}^Y)\supseteq{\hat\Gamma}^{(1)}(A,\tilde{r}^{X(Y)})$ and therefore ${\bf\hat\Gamma}^{(2)}(A,\tilde{r},\tilde{c})\supseteq{\bf\hat\Gamma}^{(1)}(A,\tilde{r},\tilde{c})$.
Evidenced by the same token,
${\bf\hat\Gamma}^{(1)}(A,\tilde{r},\tilde{c})\\ \supseteq{\bf\hat\Gamma}^{(2)}(A,\tilde{r},\tilde{c})$.

The proofs of ${\bf\hat\Gamma}^{(k)}(A,\tilde{r},\tilde{c})
= {\bf\hat\Gamma}^{(k+1)}(A,\tilde{r},\tilde{c})$, $k=5,16,19$, are completely similar.
\end{proof}

Similar to Theorem 5.3, by Lemma 5.2, the following theorem is obvious.

\begin{theorem} \label{thm5-4}
For $A\in \mathbb{C}^{n\times n}$ and
$X,Y \in D^P_n$, $\alpha,\beta\in[0,1]$,
let ${\hat\Gamma}^{(k)}(A,\star)$ and ${\bf\hat\Gamma}^{(k)}(A,\tilde{r},\tilde{c})$ be defined by Definition 5.2, $k=1,\cdots,31$. Then the following results hold:

\begin{itemize}

\item[\rm(a)]
For any $\lambda \in \sigma(A)$, there exists $i_k \in {\cal N}$ such that

$\lambda \in
\hat\Gamma_{i_k}^{(k)}(A,\star)$ for $k=1,2,3,4$,

and there exist $i_k, j_k \in {\cal N}$ with $i_k\not=j_k$ such that

$\lambda \in
\hat\Gamma_{i_k,j_k}^{(k)}(A,\star)$ for $k=5,\cdots,31$;

\item[\rm(b)]
$\sigma(A) \subseteq {\bf\hat\Gamma}^{(k)}(A,\tilde{r},\tilde{c}) \subseteq {\hat\Gamma}^{(k)}(A,\star)$ for
$k=1,\cdots,31$;

\item[\rm(c)]
$\sigma(A) \subseteq \bigcap\limits_{1\le k \le 31}{\bf\hat\Gamma}^{(k)}(A,\tilde{r},\tilde{c}) = {\bf\hat\Gamma}^{(18)}(A,\tilde{r},\tilde{c})$.
\end{itemize}
\end{theorem}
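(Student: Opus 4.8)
The plan is to run the argument that proved Theorem 5.3, but with the generic pair $(g,h)\in\mathcal{G}_n\times\mathcal{G}_n$ specialized to the concrete weighted deleted row/column sum pairs $(\tilde{r}^X,\tilde{r}^X)$, $(\tilde{c}^Y,\tilde{c}^Y)$, $(\tilde{r}^X,\tilde{c}^Y)$ and their non-tilde analogues, and to read off the set identities from Lemma 5.2 rather than from Lemma 5.1. I would organize the proof in the three stages (a), (b), (c).

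First I would prove (a). The key point is that each of the $31$ families $\hat\Gamma^{(k)}(A,\star)$ of Definition 5.2 is literally $\Gamma(A,F,g,h)$ for a suitable monotonic function $F$ — one of the $F_{\mu,\alpha}$, $F_{\nu,\alpha,\beta}$ of Definition 3.3, possibly at a degenerate value of $\alpha$ or $\beta$ — and a suitable pair $(g,h)$ drawn from $\{(\tilde{r}^X,\tilde{r}^X),(\tilde{c}^Y,\tilde{c}^Y),(\tilde{r}^X,\tilde{c}^Y),(r^X,r^X),(c^Y,c^Y),(r^X,c^Y)\}$. This is the very correspondence already used to pass from Theorem 4.1 to Theorems 4.2 and 4.5: since the diagonal of $zI-A$ is $(z-a_{1,1},\dots,z-a_{n,n})$, the $(i,j)$-component of $F(|D(zI-A)|e,|D(zI-A)|e)$ is $|z-a_{i,i}|$, $|z-a_{i,i}||z-a_{j,j}|$ or $|z-a_{i,i}|^{\alpha}|z-a_{j,j}|^{1-\alpha}$, while the $(i,j)$-component of $F(g(A),h(A))$ is exactly the radius $\rho^{(k)}_i$ or $\rho^{(k)}_{i,j}$ of Definition 5.2. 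By the Corollary to Theorem 3.4 every pair $(g,h)$ on that list belongs to the corresponding $\mathcal{G}^F$, so Theorem 5.1 applies and yields, for each $\lambda\in\sigma(A)$, a component index at which $\lambda$ lies in $\hat\Gamma^{(k)}(A,\star)$; this is a single index $i_k$ when the chosen $F$ depends only on $i$ (the cases $k=1,2,3,4$, where the union over pairs $(i,j)$ in $\Gamma(A,F,g,h)$ collapses to a union over single indices) and a pair $(i_k,j_k)$ with $i_k\neq j_k$ otherwise. That is statement (a).

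Next I would derive (b). The inclusion ${\bf\hat\Gamma}^{(k)}(A,\tilde{r},\tilde{c})\subseteq\hat\Gamma^{(k)}(A,\star)$ is immediate, since by Definition 5.2 the former is an intersection of sets of the latter type over all admissible $X,Y\in D^P_n$ and $\alpha,\beta\in[0,1]$, and $\hat\Gamma^{(k)}(A,\star)$ is one member of that intersection. For $\sigma(A)\subseteq{\bf\hat\Gamma}^{(k)}(A,\tilde{r},\tilde{c})$: fixing $\lambda\in\sigma(A)$, part (a) holds for \emph{every} choice of $X,Y,\alpha,\beta$, so $\lambda$ lies in $\hat\Gamma^{(k)}(A,\star)$ for each of them, hence in their intersection ${\bf\hat\Gamma}^{(k)}(A,\tilde{r},\tilde{c})$. (Equivalently, apply part (b) of Theorem 5.1 to each relevant triple $(F,g,h)$.) Finally, for (c), the inclusion $\sigma(A)\subseteq\bigcap_{1\le k\le 31}{\bf\hat\Gamma}^{(k)}(A,\tilde{r},\tilde{c})$ is just (b) for every $k$, and the equality follows by checking with Lemma 5.2 that ${\bf\hat\Gamma}^{(18)}(A,\tilde{r},\tilde{c})$ lies inside ${\bf\hat\Gamma}^{(k)}(A,\tilde{r},\tilde{c})$ for all $k$: the chains ${\bf\hat\Gamma}^{(18)}\subseteq{\bf\hat\Gamma}^{(7)}\subseteq{\bf\hat\Gamma}^{(1)}$, ${\bf\hat\Gamma}^{(18)}\subseteq{\bf\hat\Gamma}^{(16)}\subseteq{\bf\hat\Gamma}^{(30)}\subseteq{\bf\hat\Gamma}^{(19)}\subseteq{\bf\hat\Gamma}^{(1)}$, ${\bf\hat\Gamma}^{(18)}\subseteq{\bf\hat\Gamma}^{(31)}\subseteq{\bf\hat\Gamma}^{(21)}\subseteq{\bf\hat\Gamma}^{(19)}$ (parts (g),(h),(i)), together with the identifications ${\bf\hat\Gamma}^{(1)}={\bf\hat\Gamma}^{(k)}$ for $k\in\{2,\dots,6,8,10,12,14\}$, ${\bf\hat\Gamma}^{(7)}={\bf\hat\Gamma}^{(k)}$ for $k\in\{9,11,13,15\}$, ${\bf\hat\Gamma}^{(16)}={\bf\hat\Gamma}^{(k)}$ for $k\in\{17,22,24,28\}$, ${\bf\hat\Gamma}^{(18)}={\bf\hat\Gamma}^{(k)}$ for $k\in\{23,25,29\}$, ${\bf\hat\Gamma}^{(19)}={\bf\hat\Gamma}^{(20)}={\bf\hat\Gamma}^{(26)}$ and ${\bf\hat\Gamma}^{(21)}={\bf\hat\Gamma}^{(27)}$ (parts (a)--(f)), exhaust all $31$ indices. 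Hence the intersection collapses to ${\bf\hat\Gamma}^{(18)}(A,\tilde{r},\tilde{c})$, and combined with (b) this gives (c).

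I expect the work to be bookkeeping rather than new mathematics — Theorem 5.1, Theorem 3.1, the Corollary to Theorem 3.4, and Lemma 5.2 do all the heavy lifting. The one place to be careful is the case-by-case matching of the $31$ radius functions $\rho^{(k)}$ with the triples $(F,g,h)$ at the boundary parameter values $\alpha,\beta\in\{0,1\}$, where several of the $F_{\mu,\alpha}$ and $F_{\nu,\alpha,\beta}$ coincide, and the bookkeeping of the inclusion chains of Lemma 5.2 needed to confirm that ${\bf\hat\Gamma}^{(18)}$ sits at the bottom of every chain.
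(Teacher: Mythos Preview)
Your proposal is correct and follows essentially the same approach as the paper, which simply states that the result is obvious ``similar to Theorem 5.3, by Lemma 5.2''; you have spelled out exactly this argument in detail, using Theorem 5.1 together with Corollary 3.1 for parts (a) and (b), and the inclusion chains of Lemma 5.2 for part (c). One minor quibble: for Definition 5.2 only the tilde pairs $(\tilde{r}^X,\tilde{r}^X),(\tilde{c}^Y,\tilde{c}^Y),(\tilde{r}^X,\tilde{c}^Y)$ are needed, the non-tilde analogues you also listed belong to Definition 5.3.
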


It is easy to prove that ${\bf\hat\Gamma}^{(18)}(A,\tilde{r},\tilde{c})={\bf\check\Gamma}^{(15)}(A,\mathcal{G}_n)$.
Hence by the theorem above and Theorem 5.3, we see that ${\bf\hat\Gamma}^{(18)}(A,\tilde{r},\tilde{c})$
is the minimal eigenvalue inclusion set among ${\bf\hat\Gamma}^{(k)}(A,\tilde{r},\tilde{c})$,
${\hat\Gamma}^{(k)}(A,\star)$, $k=1,\cdots,31$, and ${\bf\check\Gamma}^{(k)}(A,\mathcal{G}_n)$,
${\check\Gamma}^{(k)}(A,\ast)$, $k=1,\cdots,27$, where ${\bf\hat\Gamma}^{(18)}(A,\tilde{r},\tilde{c}) ={\bf\hat\Gamma}^{(\mu)}(A,\tilde{r},\tilde{c})={\bf\check\Gamma}^{(\nu)}(A,\mathcal{G}_n)$, $\mu=23,25,29$, $\nu=15,19,21,25$.

In Definition 5.2 if we take $r$ and $c$ instant of $\tilde{r}$ and $\tilde{c}$, respectively, then we can
definition another 31 kind of circles and ovals of Cassini and their sets.

\begin{definition} \label{den5-3}
For $A\in \mathbb{C}^{n\times n}$ and
$X,Y \in D^P_n$, $\alpha,\beta\in[0,1]$, $i,j \in {\cal N}$, $i \not= j$,
just as Definition 5.2, we define 31 kind of circles, ovals of Cassini and their sets ${\hat\Gamma}^{(k)}(A,\star)$ and ${\bf\hat\Gamma}^{(k)}(A,r,c)$, $k=1,\cdots,31$, where the symbol ``$\star$"
denotes ``$r^X$", ``$c^Y$", ``$r^X,\alpha$", ``$c^Y,\alpha$", ``$r^X,c^Y$", ``$r^X,c^Y,\alpha$", ``$r^X,c^Y,\alpha,\beta$", respectively.
\end{definition}

When $k=1$, $\hat{\Gamma}_i^{(1)}(A,r^X)$, ${\hat\Gamma}^{(1)}(A,r^X)$
and ${\bf\hat\Gamma}^{(1)}(A,r,c)$
are proposed by Varga [48], where ${\bf\hat\Gamma}^{(1)}(A,r,c)$ is called the minimal Ger\v{s}gorin set of $A$.
In [53], a numerical approximation to the minimal Ger\v{s}gorin set of an irreducible matrix is given.

The following lemma is easy to prove.

\begin{lemma} \label{lem5-3}
For $A\in \mathbb{C}^{n\times n}$ and $\alpha,\beta\in[0,1]$, let $\hat{\Gamma}^{(k)}(A,\star)$
and ${\bf\hat\Gamma}^{(k)}(A,r,c)$ be defined by Definition 5.3, $k=1,\cdots,31$. Then

\begin{itemize}
\item[\rm(a)] The relationships (1)-(16) in Lemma 5.2 are valid, where $\tilde{r}$ and $\tilde{c}$ are changed into $r$ and $c$, respectively;

\item[\rm(b)] ${\bf\hat\Gamma}^{(22)}(A,r,c)\subseteq{\bf\hat\Gamma}^{(k)}(A,r,c)$,

$k=1,\cdots,6,8,10,12,14,16,17,19,20,24,26,28,30$;

\item[\rm(c)] ${\bf\hat\Gamma}^{(23)}(A,r,c)\subseteq{\bf\hat\Gamma}^{(k)}(A,r,c)$,

$k=1,2,3,4,7,9,11,13,15,18,21,25,27,29,31$.
  \end{itemize}
\end{lemma}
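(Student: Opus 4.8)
The plan is to derive the whole statement from relations already established in Lemmas 5.1 and 5.2, using only that $r^{X},c^{Y}\in\mathcal{G}_{n}\subset\mathcal{F}_{n}$ for every $X,Y\in D^{P}_{n}$ (Lemma 2.2). For part (a), relations (1)-(16) of Lemma 5.2 were obtained purely by substituting $(g,h)=(\tilde r^{X},\tilde c^{Y})$ into relations (1)-(13) of Lemma 5.1, and those latter relations rest only on the arithmetic-geometric mean inequality (Lemma 2.5) and the elementary containments for circles and ovals of Cassini ([52, Lemma 5.22] and its analogues); they use nothing about $\tilde r^{X},\tilde c^{Y}$ beyond membership in $\mathcal{F}_{n}$. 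So I would simply rerun that substitution with $(g,h)=(r^{X},c^{Y})$ and read off the asserted $r,c$-versions of (1)-(16) verbatim.

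For (b) and (c) the key observation is that $\hat\Gamma^{(22)}(A,r^{X},c^{Y},\alpha,\beta)$ and $\hat\Gamma^{(23)}(A,r^{X},c^{Y},\alpha,\beta)$ carry the full complement of four parameters $X,Y,\alpha,\beta$, and that by assigning boundary or midpoint values to $\alpha,\beta$ (and, where needed, taking $Y=X$ or holding one of $X,Y$ fixed) one recovers from $\hat\Gamma^{(22)}$ --- or, via the AM-GM inclusions already contained in part (a), a subset of --- each target set $\hat\Gamma^{(k)}$ in the list of (b), and likewise for $\hat\Gamma^{(23)}$ and (c). Concretely, the part-(a) forms of Lemma 5.2's relations (1),(2),(5),(6),(12),(13) yield identities such as $\hat\Gamma^{(1)}(A,r^{X})=\hat\Gamma^{(22)}(A,r^{X},c^{Y},1,1)$, $\hat\Gamma^{(2)}(A,c^{Y})=\hat\Gamma^{(22)}(A,r^{X},c^{Y},1,0)$, $\hat\Gamma^{(5)}(A,r^{X})=\hat\Gamma^{(22)}(A,r^{X},c^{Y},\frac{1}{2},1)$, $\hat\Gamma^{(6)}(A,c^{Y})=\hat\Gamma^{(22)}(A,r^{X},c^{Y},\frac{1}{2},0)$, $\hat\Gamma^{(16)}(A,r^{X},\alpha)=\hat\Gamma^{(22)}(A,r^{X},c^{Y},\alpha,1)$, $\hat\Gamma^{(17)}(A,c^{Y},\alpha)=\hat\Gamma^{(22)}(A,r^{X},c^{Y},\alpha,0)$; relations (3) and (8) give $\hat\Gamma^{(3)}(A,r^{X},c^{Y},\alpha)=\hat\Gamma^{(22)}(A,r^{X},c^{Y},1,\alpha)$ and $\hat\Gamma^{(8)}(A,r^{X},c^{Y},\alpha)=\hat\Gamma^{(22)}(A,r^{X},c^{Y},\frac{1}{2},\alpha)$; and relation (15) gives $\hat\Gamma^{(22)}(A,r^{X},c^{Y},\alpha,\beta)\subseteq\hat\Gamma^{(k)}(A,r^{X},c^{Y},\alpha,\beta)$ for $k=24,26,28,30$. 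Intersecting each such set-level statement over the relevant parameters, and then chaining with the part-(a) inclusions $\hat\Gamma^{(3)}\subseteq\hat\Gamma^{(4)}$, $\hat\Gamma^{(8)}\subseteq\hat\Gamma^{(k)}$ for $k=10,12,14$, $\hat\Gamma^{(16)}\subseteq\hat\Gamma^{(19)}$, $\hat\Gamma^{(17)}\subseteq\hat\Gamma^{(20)}$, produces ${\bf\hat\Gamma}^{(22)}(A,r,c)\subseteq\cdots\subseteq{\bf\hat\Gamma}^{(k)}(A,r,c)$ for every $k$ in (b). Part (c) is treated the same way starting from $\hat\Gamma^{(23)}$, now using the part-(a) forms of relations (1),(2),(3),(7),(9),(14),(16) --- e.g.\ $\hat\Gamma^{(7)}(A,r^{X},c^{Y})=\hat\Gamma^{(23)}(A,r^{X},c^{Y},\frac{1}{2},1)$, $\hat\Gamma^{(9)}(A,r^{X},c^{Y},\alpha)=\hat\Gamma^{(23)}(A,r^{X},c^{Y},\frac{1}{2},\alpha)$, $\hat\Gamma^{(18)}(A,r^{X},c^{Y},\alpha)=\hat\Gamma^{(23)}(A,r^{X},c^{Y},\alpha,1)$ --- together with $\hat\Gamma^{(18)}\subseteq\hat\Gamma^{(21)}$ and $\hat\Gamma^{(23)}\subseteq\hat\Gamma^{(k)}$ for $k=25,27,29,31$.

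Every individual step is a one-line identity or inclusion, so the real work --- and the main obstacle --- is the bookkeeping across all thirty-odd target indices of (b) and (c), together with one genuine subtlety: when a specialization of $\hat\Gamma^{(22)}$ (or $\hat\Gamma^{(23)}$) realizes a target $\hat\Gamma^{(k)}$ whose minimal set ${\bf\hat\Gamma}^{(k)}(A,r,c)$ is intersected over fewer parameters, one must verify that the specialization leaves exactly that smaller family free, so that the intersection of the specialized sets equals ${\bf\hat\Gamma}^{(k)}(A,r,c)$ rather than something larger. This always works because ${\bf\hat\Gamma}^{(22)}(A,r,c)$ and ${\bf\hat\Gamma}^{(23)}(A,r,c)$ are intersected over all of $\{X,Y,\alpha,\beta\}$ while the targets use subfamilies of it; for instance, fixing $\alpha=\beta=1$ in $\hat\Gamma^{(22)}(A,r^{X},c^{Y},\alpha,\beta)$ leaves $X$ free and gives $\hat\Gamma^{(22)}(A,r^{X},c^{Y},1,1)=\hat\Gamma^{(1)}(A,r^{X})$, so $\bigcap_{X\in D^{P}_{n}}$ of these recovers precisely ${\bf\hat\Gamma}^{(1)}(A,r,c)$. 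Finally I would record why these stay one-sided inclusions, unlike the equalities of Lemma 5.2(a)-(i): the reverse direction there relied on Lemmas 2.3 and 2.4, which produce the \emph{deleted} weighted sums $\tilde r^{X},\tilde c^{Y}$, and no analogue of them bounds a full weighted row sum $r^{X}$ above by a prescribed $c^{Y}$ pointwise.
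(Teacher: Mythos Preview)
Your proposal is correct and matches the paper's intended approach: the paper simply declares Lemma~5.3 ``easy to prove'' and gives no argument, so what you have outlined---re-running the Lemma~5.1 substitutions with $(g,h)=(r^{X},c^{Y})$ for part~(a), then specializing the parameters in $\hat\Gamma^{(22)}$ and $\hat\Gamma^{(23)}$ and chaining with the AM--GM inclusions from part~(a) for parts~(b) and~(c)---is exactly the natural execution of that claim. Your closing remark on why the reverse inclusions of Lemma~5.2(a)--(i) fail here (Lemmas~2.3 and~2.4 yield only $\tilde r^{X},\tilde c^{Y}$, not $r^{X},c^{Y}$) is a nice piece of extra insight that the paper does not make explicit.
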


Similar to Theorem 5.4, by Lemma 5.3, we have the following theorem.

\begin{theorem} \label{thm5-5}
For $A\in \mathbb{C}^{n\times n}$ and
$X,Y \in D^P_n$, $\alpha,\beta\in[0,1]$,
let ${\hat\Gamma}^{(k)}(A,\star)$ and ${\bf\hat\Gamma}^{(k)}(A,r,c)$
be defined by Definition 5.3. Then the following results hold:

\begin{itemize}

\item[\rm(a)]
For any $\lambda \in \sigma(A)$, there exists $i_k \in {\cal N}$ such that

$\lambda \in
\hat{\Gamma}_{i_k}^{(k)}(A,\star)$ for $k=1,2,3,4$,

and there exist $i_k, j_k \in {\cal N}$ with $i_k\not=j_k$ such that

$\lambda \in
\hat{\Gamma}_{i_k,j_k}^{(k)}(A,\star)$ for $k=5,\cdots,31$;

\item[\rm(b)]
$\sigma(A) \subseteq {\bf\hat\Gamma}^{(k)}(A,r,c) \subseteq {\hat\Gamma}^{(k)}(A,\star)$ for $k=1,\cdots,31$;

\item[\rm(c)]
$\sigma(A) \subseteq \bigcap\limits_{1\le k \le 31}{\bf\hat\Gamma}^{(k)}(A,r,c) = [{\bf\hat\Gamma}^{(22)}(A,r,c)\cap{\bf\hat\Gamma}^{(23)}(A,r,c)]$.
\end{itemize}
\end{theorem}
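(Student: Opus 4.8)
The plan is to follow the proof of Theorem 5.4 verbatim, with $\tilde r^X,\tilde c^Y$ replaced everywhere by $r^X,c^Y$, and then to close with the inclusions recorded in Lemma 5.3(b),(c). I would treat (a), (b), (c) in that order.

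First I would observe that, exactly as in Definition 5.2, each of the thirty-one families ${\hat\Gamma}^{(k)}(A,\star)$ is of the form $\Gamma(A,F,g,h)$ for an appropriate $F$ among the monotonic functions $F_{\mu,\alpha}$ ($\mu=1,\dots,7$) and $F_{\nu,\alpha,\beta}$ ($\nu=8,\dots,13$) of Definition 3.3 and an appropriate pair $(g,h)$ with $g,h\in\{r^X,c^Y\}$; the four ``single index'' circle cases $k=1,2,3,4$ are recovered by freezing the outer parameter at $1$ (so that, e.g., $F_{1,1}$ yields $|z-a_{i,i}|\le r_i^X(A)$, $F_{8,1,\alpha}$ yields $|z-a_{i,i}|\le[r_i^X(A)]^{\alpha}[c_i^Y(A)]^{1-\alpha}$, and $F_{10,1,\alpha}$ yields $|z-a_{i,i}|\le\alpha r_i^X(A)+(1-\alpha)c_i^Y(A)$, the union over pairs then collapsing to a union over $i$ because the corresponding component set does not depend on the second index). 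By Corollary 3.5 every such $(g,h)$ belongs to $\mathcal{G}^F_{\mu,\alpha}\cap\mathcal{G}^F_{\nu,\alpha,\beta}$, so Theorem 5.1 is applicable. Part (a) of Theorem 5.1 then immediately gives, for each $\lambda\in\sigma(A)$, an index $i_k$ (resp.\ a pair $i_k,j_k$ with $i_k\neq j_k$) with $\lambda\in\hat{\Gamma}^{(k)}_{i_k}(A,\star)$ (resp.\ $\lambda\in\hat{\Gamma}^{(k)}_{i_k,j_k}(A,\star)$), which is assertion (a).

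For (b), part (b) of Theorem 5.1 gives $\sigma(A)\subseteq{\hat\Gamma}^{(k)}(A,\star)$ for every admissible $X,Y\in D^P_n$ and $\alpha,\beta\in[0,1]$; intersecting over all these parameters yields $\sigma(A)\subseteq{\bf\hat\Gamma}^{(k)}(A,r,c)$, while the inclusion ${\bf\hat\Gamma}^{(k)}(A,r,c)\subseteq{\hat\Gamma}^{(k)}(A,\star)$ holds by definition, the left-hand side being an intersection having the right-hand side as one of its factors. For (c), I would invoke Lemma 5.3(b),(c): they give ${\bf\hat\Gamma}^{(22)}(A,r,c)\subseteq{\bf\hat\Gamma}^{(k)}(A,r,c)$ for $k$ in one explicit list and ${\bf\hat\Gamma}^{(23)}(A,r,c)\subseteq{\bf\hat\Gamma}^{(k)}(A,r,c)$ for $k$ in the complementary list, and the two lists together exhaust $\{1,\dots,31\}$. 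Hence ${\bf\hat\Gamma}^{(22)}(A,r,c)\cap{\bf\hat\Gamma}^{(23)}(A,r,c)$ is contained in each ${\bf\hat\Gamma}^{(k)}(A,r,c)$, so in their intersection; the reverse inclusion is trivial since $22,23\in\{1,\dots,31\}$, and $\sigma(A)\subseteq\bigcap_{1\le k\le31}{\bf\hat\Gamma}^{(k)}(A,r,c)$ follows from (b). This gives (c).

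The only real work — the ``main obstacle'' — is the bookkeeping in the first step: checking that the correspondence $k\leftrightarrow(F,g,h,\alpha,\beta)$ is precisely the one implicit in Definitions 3.3 and 5.2, and that the two index lists in Lemma 5.3(b),(c) indeed cover all thirty-one indices with $22$ and $23$ the minimal ones. But this is identical to the corresponding verification for the $\tilde r,\tilde c$ case in Lemma 5.2 and Theorem 5.4, and it transfers without change, since the proof of Lemma 5.3 only uses Lemma 2.5 together with $r^X\ge\tilde r^X$ and $c^Y\ge\tilde c^Y$; no new inequality is needed.
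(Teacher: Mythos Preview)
Your approach is essentially the paper's: it too derives Theorem~5.5 by mimicking Theorem~5.4 with $r^X,c^Y$ in place of $\tilde r^X,\tilde c^Y$ and then invoking Lemma~5.3. Parts~(b) and~(c) are handled exactly as you describe.

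There is one small inaccuracy in your bookkeeping for part~(a). You assert that each of the thirty-one families is $\Gamma(A,F,g,h)$ for some $F$ \emph{among the thirteen functions $F_{\mu,\alpha},F_{\nu,\alpha,\beta}$ of Definition~3.3}. That is not quite true: the nine indices $k=14,15,19,20,21,26,27,30,31$ in Definition~5.3 correspond to the conditions~(1$'$)--(9$'$) of Theorem~4.5 (equivalently Theorem~4.4), and none of those right-hand sides matches a component of any $F_{\mu,\alpha}$ or $F_{\nu,\alpha,\beta}$. For example, $\rho_{i,j}^{(14)}=[\alpha r_i^X+(1-\alpha)c_j^Y][\alpha r_j^X+(1-\alpha)c_i^Y]$ has the ``crossed'' indices $c_j^Y,c_i^Y$, which $F_{6,\alpha}$ and $F_{7,\alpha}$ do not produce. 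The fix is immediate, and is in fact what the paper does: for these nine indices one either (i) observes that each still defines a monotonic $F$, and Theorem~4.5 shows $(r^X,c^Y)$ is a $G$-function pair for that $F$, so Theorem~5.1 again applies; or (ii) uses the containment relations in Lemma~5.3(a) (inherited from Lemma~5.2(8),(9),(12)--(16)), which bound each of these nine sets below by one already covered via Definition~3.3. Either route closes the gap with no new ideas.
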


This theorem tells us that ${\bf\hat\Gamma}^{(22)}(A,r,c)\cap{\bf\hat\Gamma}^{(23)}(A,r,c)$ is the minimal eigenvalue
inclusion set compared with ${\bf\hat\Gamma}^{(k)}(A,r,c)$ and ${\hat\Gamma}^{(k)}(A,\star)$,
$k=1,\cdots,31$, so that it is always at least as good as the
minimal Ger\v{s}gorin set defined by [48].

When $X=Y=I$ we define the circles, ovals of Cassini and their sets as follows.

\begin{definition} \label{den5-4}
For $A\in \mathbb{C}^{n\times n}$ and
$X,Y \in D^P_n$, $\alpha,\beta\in[0,1]$, $i,j \in {\cal N}$, $i \not= j$,
we define 31 kind of circles, ovals of Cassini and their sets as follows, where the symbol ``$\star$"
denotes ``$\tilde{r}$", ``$\tilde{c}$", ``$\tilde{r},\alpha$", ``$\tilde{c},\alpha$", ``$\tilde{r},\tilde{c}$", ``$\tilde{r},\tilde{c},\alpha$", ``$\tilde{r},\tilde{c},\alpha,\beta$", respectively.

\begin{itemize}
\item[$\bullet$] $\Gamma_i^{(k)}(A,\star):=
\{z\in \mathbb{C}^n: |z - a_{i,i}| \le \rho_i^{(k)}(A,\star)\}$, $k=1,2,3,4$, with

$\rho_i^{(1)}(A,\tilde{r}) = \tilde{r}_i(A)$,

$\rho_i^{(2)}(A,\tilde{c}) = \tilde{c}_i(A)$,

$\rho_i^{(3)}(A,\tilde{r},\tilde{c},\alpha) = [\tilde{r}_i(A)]^{\alpha}[\tilde{c}_i(A)]^{1-\alpha}$,

$\rho_i^{(4)}(A,\tilde{r},\tilde{c},\alpha) = \alpha\tilde{r}_i(A) + (1-\alpha)\tilde{c}_i(A)$;

\item[$\circ$] $\Gamma_{i,j}^{(k)}(A,\star):=
\{z\in \mathbb{C}^n: |z - a_{i,i}| |z - a_{j,j}| \le \rho_{i,j}^{(k)}(A,\star)\}$,
$k=5,\cdots,15$, with

$\rho_{i,j}^{(5)}(A,\tilde{r}) =
\tilde{r}_i(A)\tilde{r}_j(A)$,

$\rho_{i,j}^{(6)}(A,\tilde{c}) =
\tilde{c}_i(A)\tilde{c}_j(A)$,

$\rho_{i,j}^{(7)}(A,\tilde{r},\tilde{c}) =
\tilde{r}_i(A)\tilde{c}_j(A)$,

$\rho_{i,j}^{(8)}(A,\tilde{r},\tilde{c},\alpha) =
[\tilde{r}_i(A)\tilde{r}_j(A)]^{\alpha}[\tilde{c}_i(A)\tilde{c}_j(A)]^{1-\alpha}$,

$\rho_{i,j}^{(9)}(A,\tilde{r},\tilde{c},\alpha) =
[\tilde{r}_i(A)\tilde{c}_j(A)]^{\alpha}[\tilde{r}_j(A)\tilde{c}_i(A)]^{1-\alpha}$,

$\rho_{i,j}^{(10)}(A,\tilde{r},\tilde{c},\alpha) =
\alpha\tilde{r}_i(A)\tilde{r}_j(A) + (1-\alpha)\tilde{c}_i(A)\tilde{c}_j(A)$,

$\rho_{i,j}^{(11)}(A,\tilde{r},\tilde{c},\alpha) =
\alpha\tilde{r}_i(A)\tilde{c}_j(A) + (1-\alpha)\tilde{r}_j(A)\tilde{c}_i(A)$,

$\rho_{i,j}^{(12)}(A,\tilde{r},\tilde{c},\alpha) =
[\alpha\tilde{r}_i(A)+(1-\alpha)\tilde{c}_i(A)][\alpha\tilde{r}_j(A) + (1-\alpha)\tilde{c}_j(A)]$,

$\rho_{i,j}^{(13)}(A,\tilde{r},\tilde{c},\alpha) =
[\alpha\tilde{r}_i(A)+(1-\alpha)\tilde{c}_i(A)][\alpha\tilde{c}_j(A) + (1-\alpha)\tilde{r}_j(A)]$,

$\rho_{i,j}^{(14)}(A,\tilde{r},\tilde{c},\alpha) =
[\alpha\tilde{r}_i(A)+(1-\alpha)\tilde{c}_j(A)][\alpha\tilde{r}_j(A) + (1-\alpha)\tilde{c}_i(A)]$,

$\rho_{i,j}^{(15)}(A,\tilde{r},\tilde{c},\alpha) =
[\alpha\tilde{r}_i(A)+(1-\alpha)\tilde{r}_j(A)][\alpha\tilde{c}_j(A) + (1-\alpha)\tilde{c}_i(A)]$;

\item[$\circ$] $\Gamma_{i,j}^{(k)}(A,\star):=
\{z\in \mathbb{C}^n: |z - a_{i,i}|^{\alpha}|z - a_{j,j}|^{1-\alpha} \le \rho_{i,j}^{(k)}(A,\star)\}$,
$k=16,\cdots,31$, with

$\rho_{i,j}^{(16)}(A,\tilde{r},\alpha) =
[\tilde{r}_i(A)]^{\alpha}[\tilde{r}_j(A)]^{1-\alpha}$,

$\rho_{i,j}^{(17)}(A,\tilde{c},\alpha) =
[\tilde{c}_i(A)]^{\alpha}[\tilde{c}_j(A)]^{1-\alpha}$;

$\rho_{i,j}^{(18)}(A,\tilde{r},\tilde{c},\alpha) =
[\tilde{r}_i(A)]^{\alpha}[\tilde{c}_j(A)]^{1-\alpha}$,

$\rho_{i,j}^{(19)}(A,\tilde{r},\alpha) =
\alpha \tilde{r}_i(A)+(1-\alpha)\tilde{r}_j(A)$,

$\rho_{i,j}^{(20)}(A,\tilde{c},\alpha) =
\alpha \tilde{c}_i(A)+(1-\alpha)\tilde{c}_j(A)$,

$\rho_{i,j}^{(21)}(A,\tilde{r},\tilde{c},\alpha) =
\alpha \tilde{r}_i(A)+(1-\alpha)\tilde{c}_j(A)$,

$\rho_{i,j}^{(22)}(A,\tilde{r},\tilde{c},\alpha,\beta) =
([\tilde{r}_i(A)]^{\beta}[\tilde{c}_i(A)]^{1-\beta})^\alpha ([\tilde{r}_j(A)]^{\beta}[\tilde{c}_j(A)]^{1-\beta})^{1-\alpha}$,

$\rho_{i,j}^{(23)}(A,\tilde{r},\tilde{c},\alpha,\beta) =
([\tilde{r}_i(A)]^{\beta}[\tilde{c}_i(A)]^{1-\beta})^\alpha
([\tilde{c}_j(A)]^{\beta} [\tilde{r}_j(A)]^{1-\beta})^{1-\alpha}$,

$\rho_{i,j}^{(24)}(A,\tilde{r},\tilde{c},\alpha,\beta) =
\beta [\tilde{r}_i(A)]^\alpha[\tilde{r}_j(A)]^{1-\alpha} + (1-\beta)[\tilde{c}_i(A)]^\alpha [\tilde{c}_j(A)]^{1-\alpha}$,

$\rho_{i,j}^{(25)}(A,\tilde{r},\tilde{c},\alpha,\beta) =
\beta [\tilde{r}_i(A)]^\alpha [\tilde{c}_j(A)]^{1-\alpha} + (1-\beta)[\tilde{c}_i(A)]^\alpha [\tilde{r}_j(A)]^{1-\alpha}$,

$\rho_{i,j}^{(26)}(A,\tilde{r},\tilde{c},\alpha,\beta) =
\alpha [\tilde{r}_i(A)]^{\beta}[\tilde{c}_i(A)]^{1-\beta} + (1-\alpha)[\tilde{r}_j(A)]^{\beta}[\tilde{c}_j(A)]^{1-\beta}$,

$\rho_{i,j}^{(27)}(A,\tilde{r},\tilde{c},\alpha,\beta) =
\alpha [\tilde{r}_i(A)]^{\beta}[\tilde{c}_i(A)]^{1-\beta} + (1-\alpha)[\tilde{c}_j(A)]^{\beta}[\tilde{r}_j(A)]^{1-\beta}$,

$\rho_{i,j}^{(28)}(A,\tilde{r},\tilde{c},\alpha,\beta) =
[\beta \tilde{r}_i(A)+(1-\beta)\tilde{c}_i(A)]^\alpha[\beta \tilde{r}_j(A) + (1-\beta)\tilde{c}_j(A)]^{1-\alpha}$,

$\rho_{i,j}^{(29)}(A,\tilde{r},\tilde{c},\alpha,\beta) =
[\beta \tilde{r}_i(A)+(1-\beta)\tilde{c}_i(A)]^\alpha[\beta \tilde{c}_j(A) + (1-\beta)\tilde{r}_j(A)]^{1-\alpha}$,

$\rho_{i,j}^{(30)}(A,\tilde{r},\tilde{c},\alpha,\beta) =
[\alpha \tilde{r}_i(A)+(1-\alpha)\tilde{r}_j(A)]^{\beta}[\alpha \tilde{c}_i(A) + (1-\alpha)\tilde{c}_j(A)]^{1-\beta}$,

$\rho_{i,j}^{(31)}(A,\tilde{r},\tilde{c},\alpha,\beta) =
[\alpha \tilde{r}_i(A)+(1-\alpha)\tilde{c}_j(A)]^{\beta}[\alpha \tilde{c}_i(A) + (1-\alpha)\tilde{r}_j(A)]^{1-\beta}$
\end{itemize}
and
  \begin{itemize}

 \item[$\bullet$] $\Gamma^{(k)}(A,\star):= \bigcup\limits_{i=1}^n\Gamma_i^{(k)}(A,\star)$, $k=1,2,3,4$,

$\Gamma^{(k)}(A,\star):= \bigcup\limits_{i,j=1,i\not=j}^n\Gamma_{i,j}^{(k)}(A,\star)$,
$k=5, \cdots, 31$;

\item[$\bullet$]
${\bf\Gamma}^{(k)}(A,\tilde{r},\tilde{c}):= \Gamma^{(k)}(A,\star)$, $k=1,2,5,6,7$,

${\bf\Gamma}^{(k)}(A,\tilde{r},\tilde{c}):= \bigcap\limits_{\alpha\in[0,1]}\Gamma^{(k)}(A,\star)$,
$k=3,4,8,\cdots,21$,

${\bf\Gamma}^{(k)}(A,\tilde{r},\tilde{c}):=
\bigcap\limits_{\alpha,\beta\in[0,1]}\Gamma^{(k)}(A,\tilde{r},\tilde{c},\alpha,\beta)$,
$k=22,\cdots,31$.
 \end{itemize}
\end{definition}

Among ${\Gamma}^{(k)}(A,\star)$ and ${\bf\Gamma}^{(k)}(A,r,c)$ there are the same relationships with Lemma 5.3, where $\hat\Gamma$, $\bf\hat\Gamma$, $r$ and $c$ are changed into $\Gamma$, $\bf\Gamma$, $\tilde{r}$ and $\tilde{c}$, respectively.

The following theorem is easy to prove.

\begin{theorem} \label{thm5-6}
For $A\in \mathbb{C}^{n\times n}$ and $\alpha,\beta\in[0,1]$, let $\Gamma^{(k)}(A,\star)$
and ${\bf\Gamma}^{(k)}(A,\tilde{r}, \tilde{c})$ be defined by Definition 5.4, $k=1,\cdots,31$.
Then the following results hold:

\begin{itemize}
\item[\rm(a)]
For any $\lambda \in \sigma(A)$, there exists $i_k \in {\cal N}$ such that

$\lambda \in
\Gamma_{i_k}^{(k)}(A,\star)$ for $k=1,2,3,4$,

and there exist $i_k, j_k \in {\cal N}$ with $i_k\not=j_k$ such that

$\lambda \in
\Gamma_{i_k,j_k}^{(k)}(A,\star)$ for $k=5,\cdots,31$;

\item[\rm(b)]
$\sigma(A) \subseteq {\bf\Gamma}^{(k)}(A,\tilde{r},\tilde{c}) \subseteq \Gamma^{(k)}(A,\star)$ for
$k=1,\cdots,31$;

\item[\rm(c)]
$\sigma(A) \subseteq \bigcap\limits_{1\le k \le 31}{\bf\Gamma}^{(k)}(A,\tilde{r},\tilde{c}) =
[{\bf\Gamma}^{(22)}(A,\tilde{r},\tilde{c})\cap{\bf\Gamma}^{(23)}(A,\tilde{r},\tilde{c})]$.
\end{itemize}
\end{theorem}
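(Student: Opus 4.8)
The plan is to obtain Theorem 5.6 as the specialization of the general machinery to the case $X=Y=I$ with the pair $(g,h)=(\tilde r,\tilde c)$, in complete parallel with the way Theorem 5.4 follows from Lemma 5.2 and Theorem 5.5 from Lemma 5.3. First I would record that $\tilde r,\tilde c\in\mathcal G_n$ by Lemma 2.2, so Corollary 3.5 yields $(\tilde r,\tilde c)\in\mathcal G^F_{\mu,\alpha}\cap\mathcal G^F_{\nu,\alpha,\beta}$ for every $\mu\in\{1,\cdots,7\}$, $\nu\in\{8,\cdots,13\}$ and all $\alpha,\beta\in[0,1]$. Hence Theorem 5.1 is available with $F$ taken to be each of the monotonic functions $F_{\mu,\alpha}$ and $F_{\nu,\alpha,\beta}$ of Definition 3.3 and the $G$-function pair $(\tilde r,\tilde c)$.

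For part (a): fix $\lambda\in\sigma(A)$, an index $k\in\{1,\cdots,31\}$, and admissible parameters $\alpha,\beta$. Matching Definition 5.4 against Definition 3.3, one checks that the $(i,j)$-th component (the $i$-th, when $k\le4$) of the vector inequality $F(|D(zI-A)|e,|D(zI-A)|e)\le F(\tilde r(A),\tilde c(A))$ is exactly the defining inequality of $\Gamma^{(k)}_{i,j}(A,\star)$, resp. $\Gamma^{(k)}_i(A,\star)$, for the appropriate $F\in\{F_{\mu,\alpha},F_{\nu,\alpha,\beta}\}$ --- for instance $k=1$ comes from $F_{1,1}$, $k=3$ from $F_{8,1,\alpha}$, $k=4$ from $F_{10,1,\alpha}$, $k=5$ from $F_{2,1}$, $k=18$ from $F_{1,\alpha}$, $k=22$ from $F_{8,\alpha,\beta}$, and so on. Theorem 5.1(a) then produces the required index $i_k$ (when $k\le4$) or pair $i_k\not=j_k$ (when $k\ge5$) with $\lambda$ in the corresponding circle or oval of Cassini, which is (a).

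For part (b): the inclusion ${\bf\Gamma}^{(k)}(A,\tilde r,\tilde c)\subseteq\Gamma^{(k)}(A,\star)$ is immediate from Definition 5.4, since ${\bf\Gamma}^{(k)}(A,\tilde r,\tilde c)$ is a (possibly one-term) intersection over the parameters of sets each of which equals some $\Gamma^{(k)}(A,\star)$. For $\sigma(A)\subseteq{\bf\Gamma}^{(k)}(A,\tilde r,\tilde c)$, I apply (a) separately for every admissible choice of $\alpha$ (and $\beta$): each such choice gives $\lambda\in\Gamma^{(k)}(A,\star)$, hence $\lambda$ lies in the intersection of these sets over all the parameters, which is precisely ${\bf\Gamma}^{(k)}(A,\tilde r,\tilde c)$.

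For part (c): I would first establish the analogue of Lemma 5.3(b),(c) with $\hat\Gamma,{\bf\hat\Gamma},r,c$ replaced by $\Gamma,{\bf\Gamma},\tilde r,\tilde c$ (the relations already recorded just before the statement of Theorem 5.6), namely ${\bf\Gamma}^{(22)}(A,\tilde r,\tilde c)\subseteq{\bf\Gamma}^{(k)}(A,\tilde r,\tilde c)$ for one explicit index set and ${\bf\Gamma}^{(23)}(A,\tilde r,\tilde c)\subseteq{\bf\Gamma}^{(k)}(A,\tilde r,\tilde c)$ for the complementary one, the two families together covering every $k\in\{1,\cdots,31\}$ other than $22$ and $23$. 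Since $22$ and $23$ themselves occur among the indices, this forces $\bigcap_{1\le k\le31}{\bf\Gamma}^{(k)}(A,\tilde r,\tilde c)={\bf\Gamma}^{(22)}(A,\tilde r,\tilde c)\cap{\bf\Gamma}^{(23)}(A,\tilde r,\tilde c)$, and combining with (b) gives $\sigma(A)\subseteq{\bf\Gamma}^{(22)}(A,\tilde r,\tilde c)\cap{\bf\Gamma}^{(23)}(A,\tilde r,\tilde c)$. The real content --- and the step I expect to cost the most care --- is the verification of these two containment families: as in the proofs of Lemmas 5.1 and 5.2 they reduce to pointwise comparisons among the radii $\rho^{(k)}_{i,j}$, which follow from the generalized arithmetic-geometric mean inequality (Lemma 2.5) together with closure of $\mathcal G_n$ under $\alpha$-convolution and $\alpha$-weighted sums (Lemmas 2.6 and 2.7). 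The one genuine structural difference from Theorem 5.4 is that, with $X=Y=I$ frozen, Lemma 2.3 is no longer available to dominate $\tilde c$ by some $\tilde r^X$, so the sets $(22)$ and $(23)$ cannot be merged and the minimal set in (c) is unavoidably the intersection of two sets rather than a single one.
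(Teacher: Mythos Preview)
Your plan is correct in outline and matches the paper's approach: the paper records (just before the statement) that the analogue of Lemma 5.3 holds with $\Gamma,{\bf\Gamma},\tilde r,\tilde c$ in place of $\hat\Gamma,{\bf\hat\Gamma},r,c$, and then declares the theorem easy, exactly in parallel with how Theorem 5.5 follows from Lemma 5.3.

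One point to correct in part (a): you assert that every one of the 31 defining inequalities of Definition 5.4 is a component of $F(|D(zI-A)|e,|D(zI-A)|e)\le F(\tilde r(A),\tilde c(A))$ for some $F$ among the thirteen $F_{\mu,\alpha},F_{\nu,\alpha,\beta}$ of Definition 3.3. This fails for $k\in\{14,15,19,20,21,26,27,30,31\}$; for instance $\rho^{(14)}_{i,j}=[\alpha\tilde r_i+(1-\alpha)\tilde c_j][\alpha\tilde r_j+(1-\alpha)\tilde c_i]$ is not of any of those thirteen forms. These nine cases are precisely the conditions of Theorem 4.5 (equivalently Theorem 4.4), which were obtained not via Theorem 5.1 directly but by first applying Lemma 2.5 to dominate the given radius by one that \emph{does} fit an $F$-component. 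The cleanest fix is simply to invoke Theorem 4.6: if $\lambda\notin\Gamma^{(k)}(A,\star)$ then $\lambda I-A$ satisfies condition $(k)$ of Theorem 4.6, hence lies in $GDD$, contradicting $\lambda\in\sigma(A)$. Equivalently, since Definition 5.4 is Definition 5.2 with $X=Y=I$, parts (a) and (b) follow immediately from Theorem 5.4. For part (c), the containments you need are pointwise radius comparisons requiring only Lemma 2.5 and parameter specialization; Lemmas 2.6 and 2.7 are not used here, since $g,h$ are the fixed functions $\tilde r,\tilde c$ rather than ranging over $\mathcal G_n$.
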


This theorem tells us that ${\bf\Gamma}^{(22)}(A,\tilde{r},\tilde{c})\cap{\bf\Gamma}^{(23)}(A,\tilde{r},\tilde{c})$ is the minimal
eigenvalue inclusion set compared with ${\bf\Gamma}^{(k)}(A,\tilde{r},\tilde{c})$ and
$\Gamma^{(k)}(A,\star)$, $k=1,\cdots,31$.

At last, when $\tilde{r}$ and $\tilde{c}$ are replaced by $r$ and $c$ respectively,
we can define $\Gamma^{(k)}(A,\star)$
and ${\bf\Gamma}^{(k)}(A,r,c)$ by Definition 5.4.

\begin{definition} \label{den5-5}
For $A\in \mathbb{C}^{n\times n}$ and
$\alpha,\beta\in[0,1]$, $i,j \in {\cal N}$, $i \not= j$,
just as Definition 5.4, we define 31 kind of circles, ovals of Cassini and their sets ${\Gamma}^{(k)}(A,\star)$ and ${\bf\Gamma}^{(k)}(A,r,c)$, $k=1,\cdots,31$, where the symbol ``$\star$"
denotes ``$r$", ``$c$", ``$r,\alpha$", ``$c,\alpha$", ``$r,c$", ``$r,c,\alpha$", ``$r,c,\alpha,\beta$", respectively.
\end{definition}

There are the corresponding
relationships among ${\Gamma}^{(k)}(A,\star)$ and ${\bf\Gamma}^{(k)}(A,r,c)$, $k=1,\cdots,31$. The following theorem is directly.

\begin{theorem} \label{thm5-7}
For $A\in \mathbb{C}^{n\times n}$ and $\alpha,\beta\in[0,1]$, let $\Gamma^{(k)}(A,\star)$
and ${\bf\Gamma}^{(k)}(A,r,c)$ be defined as Definition 5.5, $k=1,\cdots,31$.
Then the following results hold:

\begin{itemize}
\item[\rm(a)]
For any $\lambda \in \sigma(A)$, there exists $i_k \in {\cal N}$ such that

$\lambda \in
\Gamma_{i_k}^{(k)}(A,\star)$ for $k=1,2,3,4$,

and there exist $i_k, j_k \in {\cal N}$ with $i_k\not=j_k$ such that

$\lambda \in
\Gamma_{i_k,j_k}^{(k)}(A,\star)$ for $k=5,\cdots,31$;

\item[\rm(b)]
$\sigma(A) \subseteq {\bf\Gamma}^{(k)}(A,r,c) \subseteq \Gamma^{(k)}(A,\star)$ for
$k=1,\cdots,31$;

\item[\rm(c)]
$\sigma(A) \subseteq \bigcap\limits_{1\le k \le 31}{\bf\Gamma}^{(k)}(A,\tilde{r},\tilde{c}) =
[{\bf\Gamma}^{(22)}(A,r,c)\cap{\bf\Gamma}^{(23)}(A,r,c)]$.
\end{itemize}
\end{theorem}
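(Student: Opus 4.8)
The plan is to run the argument in complete parallel with the proof of Theorem 5.6 (itself modelled on Theorem 5.4), since Definition 5.5 is obtained from Definition 5.4 simply by replacing $\tilde r,\tilde c$ with $r,c$ and fixing the weight matrices at $X=Y=I$. The three ingredients are Theorem 5.1, Corollary 3.5, and the $r,c$-analogue of Lemma 5.3 announced in the paragraph preceding the statement.

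For part (a), I would first match each of the thirty-one families $\Gamma_i^{(k)}(A,\star)$, $\Gamma_{i,j}^{(k)}(A,\star)$ with a set $\Gamma_t(A,F,g,h)$ of the kind occurring in Theorem 5.1, for a suitable $F$ among the $F_{\mu,\alpha}$, $F_{\nu,\alpha,\beta}$ of Definition 3.3 and $(g,h)\in\{(r,r),(c,c),(r,c)\}$. For the oval families $k=5,\dots,31$ this is a direct comparison of the radii $\rho^{(k)}$ with the entries of $F(g(A),h(A))$; for the four single-centre families one uses respectively $F_{1,1}$, $F_{1,0}$, $F_{8,1,\alpha}$, $F_{10,1,\alpha}$, whose $(i,j)$-entry depends on only one index and hence collapses the oval into a disc about $a_{i,i}$ (or $a_{j,j}$). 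Substituting $x=y=|D(\lambda I-A)|e$ turns $[F(x,y)]_t$ into $|\lambda-a_{i,i}|$, $|\lambda-a_{i,i}||\lambda-a_{j,j}|$, or $|\lambda-a_{i,i}|^{\alpha}|\lambda-a_{j,j}|^{1-\alpha}$, while $[F(g(A),h(A))]_t$ reproduces exactly $\rho^{(k)}$; since Corollary 3.5 gives $(g,h)\in\mathcal{G}^F_{\mu,\alpha}\cap\mathcal{G}^F_{\nu,\alpha,\beta}$, Theorem 5.1(a) applies and delivers (a) for every $k$. Part (b) then requires no further work: intersecting the conclusion of (a) over the parameters $\alpha,\beta\in[0,1]$ that enter the definition of ${\bf\Gamma}^{(k)}(A,r,c)$ gives $\sigma(A)\subseteq{\bf\Gamma}^{(k)}(A,r,c)$, and the inclusion ${\bf\Gamma}^{(k)}(A,r,c)\subseteq\Gamma^{(k)}(A,\star)$ is immediate from the definition of ${\bf\Gamma}^{(k)}$ as an intersection (with equality when $k\in\{1,2,5,6,7\}$).

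For part (c) I would invoke the $r,c$-version of Lemma 5.3: namely ${\bf\Gamma}^{(22)}(A,r,c)\subseteq{\bf\Gamma}^{(k)}(A,r,c)$ for $k\in\{1,\dots,6,8,10,12,14,16,17,19,20,24,26,28,30\}$ and ${\bf\Gamma}^{(23)}(A,r,c)\subseteq{\bf\Gamma}^{(k)}(A,r,c)$ for the complementary list $k\in\{1,2,3,4,7,9,11,13,15,18,21,25,27,29,31\}$. These two lists together exhaust $\{1,\dots,31\}$, and $22,23$ lie trivially on each side, so for every $k$ one obtains ${\bf\Gamma}^{(22)}(A,r,c)\cap{\bf\Gamma}^{(23)}(A,r,c)\subseteq{\bf\Gamma}^{(k)}(A,r,c)$, hence $\subseteq\bigcap_{1\le k\le 31}{\bf\Gamma}^{(k)}(A,r,c)$; the reverse inclusion holds because $22$ and $23$ are themselves among the admissible $k$, and $\sigma(A)\subseteq{\bf\Gamma}^{(22)}(A,r,c)\cap{\bf\Gamma}^{(23)}(A,r,c)$ is just the cases $k=22,23$ of part (b). The only step that demands genuine attention is verifying the pointwise radius inequalities underlying the $r,c$-analogue of Lemma 5.3 --- that is, sorting out which combination of $r,c$ dominates which --- but each of these reduces to a single application of Lemma 2.5 (the generalized arithmetic-geometric mean inequality), carried out exactly as in Lemmas 5.1--5.3; so no new difficulty arises beyond the bookkeeping.
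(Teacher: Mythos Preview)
Your proposal is correct and follows essentially the same route as the paper: the paper gives no explicit proof of Theorem~5.7, declaring it ``directly'' after announcing the $r,c$-analogue of the relationships in Lemma~5.3, and your argument unpacks exactly that---Theorem~5.1 together with Corollary~3.1 for parts (a) and (b), and the inclusion lists (b), (c) of the Lemma~5.3 analogue for part (c). The matching of the disc cases $k=1,\dots,4$ with degenerate choices of $F_{\mu,\alpha}$ and $F_{\nu,\alpha,\beta}$ mirrors precisely the reductions carried out in the proof of Theorem~4.1.
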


This theorem tells us that ${\bf\Gamma}^{(22)}(A,r,c) \cap {\bf\Gamma}^{(23)}(A,r,c)$ is the minimal eigenvalue
inclusion set compared with ${\bf\Gamma}^{(k)}(A,r,c)$ and
$\Gamma^{(k)}(A,\star)$, $k=1,\cdots,31$.

This theorem includes some known results as follows.

When $k=1$, it is exactly the Ger\v{s}gorin circle theorem.

It is shown that for any $\lambda \in \sigma(A)$, there exist $i_k \in {\cal N}$, $k=1,2,3$,
such that

\begin{itemize}
\item[]
$\lambda \in \Gamma_{i_1}^{(1)}(A,r)$ and $\lambda \in \Gamma_{i_2}^{(2)}(A,c)$ [4, Theorem 1];

\item[]
$\lambda \in \Gamma_{i_3}^{(3)}(A,r,c,\alpha)$ [35, Satz III],
\end{itemize}

there exist $i_k, j_k \in {\cal N}$ with $i_k\not=j_k$, $k=5,6,8$, such that

\begin{itemize}
\item[]
$\lambda \in \Gamma_{i_5,j_5}^{(5)}(A,r)$ and $\lambda \in \Gamma_{i_6,j_6}^{(6)}(A,c)$ [5, Theorem 11];

\item[]
$\lambda \in \Gamma_{i_8,j_8}^{(8)}(A,r,c,\alpha)$ [35, Satz V]
\end{itemize}

and
$\sigma(A)\subseteq {\bf\Gamma}^{(k)}(A,r,c)$, $k=3,4$ [11, Theorems 14,15].

Corresponding to $k=5,6$, when $k=7$ a new simple oval of Cassini is proposed in Theorem 5.7.
In general, there is no relationship between the three.

\newpage\markboth{}{}
\section{Conclusion}\label{se6}

In this paper, in order to investigate the strictly diagonally dominant matrices and
the inclusion regions of matrix eigenvalues, a class of $G$-function pairs is
proposed, which extends the concept of $G$-functions. Just as the definition of $G$-functions, we first
definite a positive monotonic function, then the corresponding $G$-function pair is induced.
For general $G$-function pairs, we prove their relations with strictly diagonally dominant
matrices and the inclusion regions of matrix eigenvalues, respectively. Thirteen kind of
$G$-function pairs are established, which are easy to determine numerically.
Their properties and characteristics are studied, and their relations with
$G$-functions are discussed. By using these special $G$-function pairs, we construct
a large of necessary and sufficient conditions for strictly diagonally dominant matrices
and the inclusion regions of matrix eigenvalues. These conditions and inclusion regions
are dependent on only $G$-function pairs and
$r$, $c$, $r^X$, $c^Y$, $\tilde{r}$, $\tilde{c}$, $\tilde{r}^X$, $\tilde{c}^Y$, for $X,Y \in D^P_n$,
so that they are easy to compute. Our results extend, include and are better than some classical
results.

In Section 3, we establish 13 kind of $G$-function pairs. Of cause,
we can define more $G$-function pairs according Definitions 3.1 and 3.2.

In Sections 4 and 5, a large of necessary and sufficient conditions
for strictly diagonally dominant matrices and matrix eigenvalue inclusion regions are
constructed by some special $G$-function pairs those are
respectively composed of $g,h \in \{\mathcal{G}_n, r, c, \tilde{r},
\tilde{c}, r^X, c^Y, \tilde{r}^X, \tilde{c}^Y\}$, where $X,Y \in D^P_n$.
How to construct more necessary and sufficient conditions, including regions is still an interesting subject.

For example, let
\begin{eqnarray*}
\frac{1}{p} + \frac{1}{q} = 1, \; p \ge 1, q\ge 1.
 \end{eqnarray*}

Like [38.39], if for $0 \le \alpha \le 1$, we take
\begin{eqnarray*}
g^{(1)}(\alpha) = (g^{(1)}_1(\alpha), \cdots, g^{(1)}_n(\alpha))^T
 \end{eqnarray*}
with
\begin{eqnarray*}
g^{(1)}_k(\alpha) = r_{k,{\alpha p}}^\alpha c_{k,(1-\alpha )q}^{1-\alpha}, \; k\in{\cal N},
 \end{eqnarray*}
and for $s>0$,
\begin{eqnarray*}
r_{k,s}= \left(\sum\limits_{j\in {\cal N}\backslash
\{k\}}|a_{k,j}|^s\right)^{1/s},  \; c_{k,s}= \left(\sum\limits_{i\in {\cal N}\backslash
\{k\}}|a_{i,k}|^s\right)^{1/s}, \; k\in{\cal N}.
 \end{eqnarray*}
Then by [38, Theorem I], $g^{(1)}(\alpha) \in \mathcal{G}_n$.

Similarly, due to Ostrowski (1951) [10,13], if we take
\begin{eqnarray*}
g^{(2)}(\bar{\alpha}) = (g^{(2)}_1(\alpha_1), \cdots, g^{(2)}_n(\alpha_n))^T
 \end{eqnarray*}
with
\begin{eqnarray*}
g^{(2)}_k(\alpha_k) = \alpha_k^{1/q}r_{k,p}, \; k\in{\cal N},
 \end{eqnarray*}
where $\bar\alpha = (\alpha_1, \cdots, \alpha_n)^T$ satisfies $\alpha_k > 0$, $k\in {\cal N}$, and
$\sum_{k=1}^n 1/(1+\alpha_k) \le 1$.
Then by [10, Theorem A], [13] or [52, Theorom 1.19], $g^{(2)}(\bar{\alpha}) \in \mathcal{G}_n$.

In like wise, if we take
\begin{eqnarray*}
g^{(3)}(\bar{\alpha}) = (g^{(3)}_1(\alpha_1), \cdots, g^{(3)}_n(\alpha_n))^T
 \end{eqnarray*}
with
\begin{eqnarray*}
g^{(3)}_k(\alpha_k) = r_{k,p}/\alpha_k, \; k\in{\cal N},
 \end{eqnarray*}
where $\alpha_k > 0$, $k\in {\cal N}$, and
$\sum_{k=1}^n \alpha_k^q \le 1$.
Then by [29, Theorom 4], $g^{(3)}(\bar{\alpha}) \in \mathcal{G}_n$.

Again, we take
\begin{eqnarray*}
g^{(4)}(\alpha) = (g^{(4)}_1(\alpha), \cdots, g^{(4)}_n(\alpha))^T
 \end{eqnarray*}
with
\begin{eqnarray*}
g^{(4)}_k(\alpha) = \alpha \max\limits_{j\in {\cal N}\backslash\{i\}}|a_{i,j}|, \; k\in{\cal N},
 \end{eqnarray*}
where $\alpha > 0$ satisfies
$\sum_{k=1}^n r_i(A)/\max _{j\in {\cal N}\backslash\{i\}}|a_{i,j}| \le \alpha(1+\alpha)$.
Then by [14, Corollary 1.2] or [52, Theorem 1.20], $g^{(4)}(\alpha) \in \mathcal{G}_n$.

Therefore, using $g^{(k)}(\alpha)$, $g^{(2)}(\bar{\alpha})$, $g^{(3)}(\bar{\alpha})$ and $g^{(4)}(\alpha)$,
we can derive more sufficient conditions for
the generalized diagonally dominant matrices
and the matrix eigenvalue inclusion regions.

In order to generalize strict diagonal dominance to the partitioned matrices, it was almost
simultaneously and independently considered [16, 18, 39].
For the block strictly (or strictly block) diagonally dominant matrices defined
[16, 52], Just as block $G$-functions [8] or $G$-functions
in the partitioned case [52], we can propose block $G$-function pairs [47].
The corresponding necessary and sufficient conditions for the block strictly diagonally
dominant matrices and the matrix eigenvalue inclusion regions can be obtained.

In the paper, the $G$-function pairs is defined on ${\mathbb{C}^{n\times n}}$.
We can define it on some subspace of ${\mathbb{C}^{n\times n}}$ [47].
How to extend $G$-functions and define other function pairs similar to $G$-function
pairs to describe the irreducible diagonally dominant matrices and their eigenvalue inclusion regions
is an interesting subject.

Recently, the Ger\v{s}gorin circle theorem has been extended to tensors. It is also an
interesting open problem how to generalize the $G$-function pairs to tensors and derive
the corresponding eigenvalue inclusion regions.

\newpage\markboth{}{}

\end{document}